\address{\newline{\normalsize Max Planck Institute for Mathematics, Vivatsgasse 7, 53111 Bonn, Germany}
\newline{\it E-mail address}: karzhema@mpim-bonn.mpg.de}
\makeatletter\@addtoreset{equation}{section}\makeatother
\makeatletter\@addtoreset{subsection}{equation}\makeatother
\newtheorem{theorem}[equation]{Theorem}
\newtheorem{prop}[equation]{Proposition}
\newtheorem{lemma}[equation]{Lemma}
\newtheorem{cor}[equation]{Corollary}
\newtheorem{conj}[equation]{Conjecture}
\theoremstyle{remark}
\newtheorem{remark}[equation]{Remark}
\newtheorem*{inter}{Intermedia}
\theoremstyle{definition}
\newtheorem{definition}[equation]{Definition}
\newtheorem{example}[equation]{Example}
\newtheorem{question}{Question}
\renewcommand{\thequestion}{C}
\newcommand{\com}{\mathbb{C}}
\newcommand{\ra}{\mathbb{Q}}
\newcommand{\f}{\mathbb{F}}
\newcommand{\summ}{\displaystyle\sum}
\newcommand{\aut}{\text{Aut}}
\newcommand{\p}{\mathbb{P}}
\newcommand{\fie}{{\bf k}}
\newcommand{\map}{\longrightarrow}
\newcommand{\cel}{\mathbb{Z}}
\newcommand{\gal}{\mathrm{Gal}}
\newcommand{\bra}{\mathrm{Br}}
\renewcommand{\theenumi}{\arabic{enumi}}
\title{On one stable birational invariant}
\author{Ilya Karzhemanov}
\begin{document}

\maketitle

\begin{abstract}
This is an expository article in which we propose that (rational)
fibrations on the projective space $\p^n$ by (birationally)
Abelian hypersurfaces, for an arbitrary $n\ge 2$, provide an
obstruction to stable rationality of algebraic varieties. We
discuss the evidence for this proposition and derive some (almost
straightforward) corollaries from it.
\end{abstract}

\bigskip

\section{Introduction}
\label{section:int}

\refstepcounter{equation}
\subsection{}
\label{subsection:int-1}

Let $X$ be an algebraic variety defined over a field $\fie$. If
not stated otherwise, $\text{char}~\fie = 0$ and $X$ will be
smooth, projective and geometrically integral. We will denote by
$\fie(X)$ the field of rational functions on $X$. When speaking
about \emph{stable birational geometry} of $X$, one is usually up
to some ((bi)rational) interrelation between $X$ and its
``stabilization" $X\times\p^k$, with an arbitrary $k\ge 1$. More
specifically, one is interested in those properties of the field
$\fie(X)$ that (dis)appear when passing to the field
$\fie(X)(t_1,\ldots,t_k)$, $t_i$ being $\fie(X)$-transcendental
variables. For instance, one may study such classical question as
(stable) rationality of linear quotients $X = V/G$ (for $V :=
\fie^{\dim X}$, $G\subseteq GL(V)$ a reductive group and the
$G$-action being free at the generic point on $V$), and we refer
to \cite{colliot-sansuc} (see also \cite{manin-tsfas},
\cite{prokhor-quots}) and references therein for an extensive
overview of the state of art. In its turn, a particular (and in
fact the only one) really unavoidable matter in this discussion is
the need of \emph{stable birational invariants} of $X$, i.\,e.
those properties of $\fie(X)$ that ``do not change" after passage
to $\fie(X)(t_1,\ldots,t_k)$ and/or vice versa.

Our aim in this introductory note is like this. First we will give
an account (mostly in this section) of some of classical stable
birational invariants of $X$ (see {\ref{subsection:int-2}},
{\ref{subsection:int-3}} and {\ref{subsection:int-5}} below). Next
we will formulate a problem which seems to be not accessible by
the classical tools (see {\ref{subsection:int-4}},
Question~\ref{question:colliot}). Then we introduce a (seemingly
new) stable birational invariant of $X$ (see
{\ref{subsection:int-5}}, {\ref{subsection:int-66}}), conjecture
one of its crucial properties
(Conjecture~\ref{theorem:p-n-ab-pen}), and after that we deduce
Theorem~\ref{theorem:main}. The latter is proved completely in
Section~\ref{section:pro}. We remark that
Theorem~\ref{theorem:main} (as well as
Conjecture~\ref{theorem:p-n-ab-pen}) has many quite strong
implications, among which we mention only the solution to our
initial problem (= Question~\ref{question:colliot}) and another
interesting corollary (= Corollary~\ref{theorem:main-cor}), all
treated in {\ref{subsection:int-6}}. The rest of the paper
(Sections~\ref{section:misc} and \ref{section:misce}) is devoted
to the verification (or, if one likes, to the sketch of a proof)
of Conjecture~\ref{theorem:p-n-ab-pen}.

As a result of the overview style we have adopted, the text
contains an abundance of Remarks and \emph{Intermedias}, aiming to
guide the reader through the line of arguments we have employed to
attack Conjecture~\ref{theorem:p-n-ab-pen}, as well as to draw
some parallels with relevant theories (the latter are heuristics
really and we apologize in advance for a loose exposition). To sum
up, our arguments here mimic in a sense (and were inspired by)
those in \cite{mumford-such-surf}, used to construct an algebraic
surface (over $\ra(\sqrt[7]{1}$)), different from $\p^2$ and
having ample $K,~K^2 = 9,~q=p_g=0$. In fact, this surface is
``homeomorphic" to a degree $2$ del Pezzo surface, when both
brought to $\ra_2$, -- the fact established in
\cite{mumford-such-surf} by means of $p$-adic uniformization (cf.
Intermedia in {\ref{subsection:misc-5}} below).

\refstepcounter{equation}
\subsection{}
\label{subsection:int-2}

To start with, we indicate that the above mentioned classical
invariants are essentially ``group-theoretic". For instance, let
$G$ be a profinite group and $M$ be a free $\cel$-module of finite
rank, with continuous $G$-action ($M$ is called \emph{$G$-module}
for short). Then $M$ admits what is called \emph{flascque
resolvent} (see \cite{voskresenskij}), i.\,e. there is an exact
sequence
$$
0\to M\to D\to F\to 0
$$
of $G$-modules, where $D$ carries a $\cel$-basis in which $G$ acts
by permutations ($D$ is called a \emph{permutation module}), and
$\text{Hom}(H^1(G',F),\ra/\cel) = 0$ for any open subgroup
$G'\subseteq G$ ($F$ is called \emph{flascque module}).

\begin{example}
\label{example:torus-pic} Let $T$ be a $\fie$-torus and $X\supset
T$ a smooth projective completion of $T$. Put $\bar{X} := X
\times_{\fie} \bar{\fie},~\bar{T} := T \times_{\fie} \bar{\fie}$
and consider the group $D$ of divisors supported on
$\bar{X}\setminus\bar{T}$. Let also $\hat{T} :=
\text{Hom}(T,\bar{\fie}^*)$ be the group of characters of $T$.
Then $\text{Pic}(\bar{X})$ is flasque and one gets a flascque
resolvent $0 \to \hat{T} \to D \to \text{Pic}(\bar{X})\to 0$ of
$\hat{T}$.
\end{example}

Further, considering various $M$ up to the direct sums with
permutation $G$-modules, one gets a homomorphism $\rho: S_G \to
F_G$ given by $M\mapsto \rho(M) := F$, where $S_G$ is the
semigroup (w.\,r.\,t. $\oplus$) of classes $\left[M\oplus P\ \vert
\ P\ \text{a permutation $G$-module}\right]$ and $F_G$ is the
semigroup (of classes of) flascque $G$-modules. Now, if say
$L\supseteq\fie$ is a Galois extension with $\text{Gal}(L/\fie) =
G$, then $\rho(X) :=
\rho(H^0(U_L,\mathcal{O}_{U_L}^*)\slash\fie^*)\in F_G$ is a stable
($\fie$-)birational invariant of $X$. Here $U\subset X$ is an open
subset such that $\text{Pic}(U_L) = 0$ for $U_L :=
U\times_{\fie}L$ (see \cite{voskresenskij}). Furthermore, one gets
$\rho(X) = 0$ when $X$ is $\fie$-rational, thus an obstruction to
stable $\fie$-rationality.

\refstepcounter{equation}
\subsection{}
\label{subsection:int-3}

Let us now consider the \emph{Brauer group} $\mathrm{Br}(\fie(X))$
of the field $\fie(X)$. One distinguishes a subgroup
$\bra_{\emph{v}}(\fie(X))\subset\bra(\fie(X))$ (or simply
$\bra_{\emph{v}}$ if $\fie(X)$ is clear from the context) -- the
\emph{unramified Brauer group} of $\fie(X)$ -- by the property
that for every element $\gamma\in\bra_{\emph{v}}$ and any
valuation $v$ of $\fie(X)$, $\gamma\in\bra(A_{v})$ for the
valuation ring $A_v$ of $v$. Now, to be able to work with
(actually calculate) $\bra_v$, one interprets $\fie(X)$ as the
field of invariants in the algebraic closure $\overline{\fie(X)}$
of the group $G := \gal(\overline{\fie(X)}/\fie(X))$. Thus $X$, in
a sense, is a ``geometric quotient $U/G$" (see
\cite{fed-yu-univ-spaces-1}, \cite{fed-yu-univ-spaces-2},
\cite{fed-yu-univ-spaces-3} for the description of the (universal)
space $U$ (be advised though that $\fie = \overline{\f}_p$ in
\cite{fed-yu-univ-spaces-2}, \cite{fed-yu-univ-spaces-3}); we
remark that the current discussion is merely a heuristics aimed to
develop some intuition for $\bra_{\emph{v}}$ rather than rigorous
statements). Then, for $G$ being a profinite group,
$\bra(\fie(X))$ can be interpreted as $\varinjlim
H^2(G_i,\ra/\cel)$ for some \emph{finite} groups $G_i$ (quotients
of $G$). In fact, for each $\gamma\in H^2(G_i,\ra/\cel) =
H^2(G_i,\overline{\fie(X)}^{*})$ one constructs a central
extension $G_{i,\gamma}$ of $G_i$ and then defines a $\p^m$-bundle
(for some $m = m(\gamma)\in\mathbb{N}$) on (a smooth birational
model of) $X$ as the quotient $(\p^m\times
X_i)\slash_{\displaystyle G_{i,\gamma}}$, where
$X_i\slash_{\displaystyle G_{i,\gamma}}\approx
X$\footnote{$\approx$ stands for the birational equivalence.} and
$X_i := U/_{\displaystyle\text{Ker}[G\to G_i]}$ for the previously
mentioned $U$. We will write, informally, $\bra(\fie(X)) =
H^2(G,\overline{\fie(X)}^{*})$. One may now apply the computations
from \cite{fedya-1} and \cite{fedya-2} to identify
$\bra_{\emph{v}}$ with the subgroup of those $\gamma\in
\bra(\fie(X))$ that restrict trivially on all the rank $2$ finite
abelian subgroups in $G$ with cyclic image in the decomposition
group of $v$ (for all $v$). It also follows from the $\p^m$-bundle
description of $\bra(\fie(X))$ that this group (together with
$\bra_{\emph{v}}$) is a stable birational invariant of $X$ (see
e.\,g. \cite{shaf-luroth}).

\begin{remark}
Of course, there are many more (stable) birational invariants (all
for $\bar{\fie}$-rational $X$), such as the sets
$X(\fie)\slash\text{``R-equivalence"},~X(\fie)\slash\text{``Brauer
equivalence"}$ and the groups
$\text{CH}_0(X),~\text{Ker}\big(\deg: \text{CH}_0(X)\to\cel\big)$
of $0$-cycles on $X$, as well as numerous problems on interactions
between these (in the context of the Hasse principle for example).
Let us stop, however, our listing of invariants at this point and
proceed to the main subject of this paper.
\end{remark}

\refstepcounter{equation}
\subsection{}
\label{subsection:int-4}

Recall that variety $X$ is called \emph{stably b-infinitely
transitive} if $X\times\p^k$ admits an infinitely transitive model
(w.\,r.\,t. the group $\text{SAut}$) for some $k$ (see \cite{bkk}
for the basic definitions). It is conjectured (= \cite[Conjecture
1.4]{bkk}) this property is equivalent to $X$ being unirational
(see \cite{arh-et-al} for an implication in one direction). In
order to develop an approach to the conjecture one may consider
the next

\begin{question}[{= \cite[Remark
3.3]{bkk}}] \label{question:colliot} Suppose $X$ is unirational.
Does then $X\times\p^k$, some $k\ge 0$, admit a model which is
infinitely transitive and carries an algebraic group $\frak{G}$
action with a Zariski dense orbit ($\dim \frak{G}<\infty$)?
\end{question}

It is not difficult to work out positive answer to
Question~\ref{question:colliot} in case $X$ is a linear quotient
$V\slash G$, as in {\ref{subsection:int-1}}, with finite $G$ at
least (cf. \cite[3.1]{bkk}). Thus, in view of the discussion
started in {\ref{subsection:int-2}}, it is tempting to test
Question~\ref{question:colliot} in a more geometric set-up, namely
that when $X$ is a hypersurface (see {\ref{subsection:int-6}}).
But then we have to look for such a (stable) birational property
of $X$ that would ``feel" unirationality (unlike those properties
mentioned above).

\refstepcounter{equation}
\subsection{}
\label{subsection:int-5}

Let us start with some motivation for the forthcoming
constructions. Consider an elliptic fibration $f: X \map B$ over a
curve $B$ and suppose that $f$ admits a section. Form a group
$\frak{I}$ of all elliptic fibrations (over $B$) having $f: X\map
B$ their Jacobi fibration (see e.\,g.
\cite{shaf-principal-homog-spaces}). Set $E_b := f^{-1}(b)$ for
$b\in B$. Then there is a homomorphism
$$
h: \frak{I} \to \oplus_{b\in B}\text{Tors}(E_b)
$$
obtained from the fact that every multiple fiber of any elliptic
fibration from $\frak{I}$ is isogeneous to some $E_b$. Put
$\frak{I}_0 := \text{Ker}(h)$. This $\frak{I}_0$ is isomorphic to
the \emph{Brauer group $\bra(X) :=
H_{\text{\'et}}^2(X,\mathcal{O}_X^*)$ of the surface $X$}. This is
very much similar to $\bra(\fie(X))$ (or, as probably more to the
point, to $\bra_{\emph{v}}$) in {\ref{subsection:int-3}} and
suggests that the group $\bra(\fie(X))$ for any (or at least
sufficiently general in $\text{Hilb}_X$) variety $X$ should admit
an ``intrinsic" description in terms of the (birational) geometry
of $X$. Without going further into discussing the philosophy, let
us just say that we are up to the existence of rational fibrations
$f: X \dashrightarrow B$, where $B$ is a curve and generic fiber
of $f$ is birational to an Abelian variety. Call such $X$
\emph{b-\,Hamiltonian} (or \emph{b.-\,H.} for short). Then we
claim that being b.-\,H. is a stable birational invariant of $X$,
i.\,e. $X$ is b.-\,H. if (iff?) $X\times\p^k$ is so for some
(all?) $k\ge 1$. This setting, however, is too general to be true,
and we now pass to imposing some constraints on $X$ and giving the
precise statements.

\begin{remark}
\label{remark:artin-mum-clemens-griff} One may draw a parallel
between the above description of the ``Brauer-like" objects,
namely $\bra(\fie(X))$ vs. $\bra(X)$, and the ``algebraic"
approach of \cite{art-mum} vs. ``transcendental" methods of
\cite{cle-gri}, with considerations of both \cite{art-mum} and
\cite{cle-gri} boiling down to that in $H^3(X)$ (see also
\cite{dolg-gross}, \cite{murre}, \cite{colliot-oj}). Indeed, given
a conic bundle (or the \emph{Severi\,--\,Brauer}) structure $X
\map S$ (with certain restrictions on the discriminant), one
naturally assigns with $X$ a quaternion algebra $A$ (the Azumaya
algebra of rank $4$) over $\fie(S)$. This $A$ is a $2$-torsion
element in $\bra(\fie(S))$, which yields $H^4(X,\mu_2)$ and hence
$H^3_{tors}(X,\cel)\ne 0$, with the latter being an obstruction to
(stable) rationality of $X$. From the transcendental side in turn,
one considers $J(X) := H^3(X,\com)\slash (H^3(X,\cel)+H^{1,2})$
($\dim X = 3$), the \emph{intermediate Jacobian} of $X$, which is
an Abelian variety with a principal polarization $\Theta$. The
fundamental property of $J(X)$ is that when $X$ is rational, then
$(J(X),\Theta)\simeq\displaystyle\prod_i (J(C_i),\Theta_i)$ as
principally polarized Abelian varieties, where $C_i$ are some
smooth curves with Jacobians $J(C_i)$ and theta-polarizations
$\Theta_i$. Note however that the ``stable" version of $J(X)$ does
not make sense in the current setting (but compare with
Theorem~\ref{theorem:main} below).
\end{remark}

\refstepcounter{equation}
\subsection{}
\label{subsection:int-66}

Throughout the paper $A^n$ denotes an Abelian variety of dimension
$n$.

\begin{definition}
\label{definition:ab-var-rully} We call $A^n$ \emph{rully} if
there exists a sequence of Abelian subvarieties $0\varsubsetneq
A^1\varsubsetneq\ldots\varsubsetneq A^{n-1}\varsubsetneq A^n$.
\end{definition}

The subject of primary interest for us in the present narrative
will be the following:

\begin{conj}
\label{theorem:p-n-ab-pen} Projective space $\p^n$ is b.-\,H. for
any $n\ge 2$. More precisely, there exists a rational fibration
$\p^n\dashrightarrow\p^1$ with generic fiber $\approx A^{n-1}$ for
some rully Abelian variety $A^{n-1}$.
\end{conj}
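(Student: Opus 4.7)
The plan is to proceed by induction on $n$. The base case $n=2$ is classical: any generic pencil of smooth plane cubics defines, after blowing up the nine base points, a rational elliptic fibration $\p^2\dashrightarrow\p^1$ whose generic fiber is an elliptic curve, and an elliptic curve is trivially rully in dimension one.

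For the inductive step from $n-1$ to $n$, I would start from a rational fibration $\pi_{n-1}\colon\p^{n-1}\dashrightarrow\p^1$ with generic fiber birational to a rully $A^{n-2}$, fix an auxiliary elliptic curve $E$, and take $A^{n-1}$ isogenous to $A^{n-2}\times E$ so that the chain $A^1\varsubsetneq\ldots\varsubsetneq A^{n-2}\varsubsetneq A^{n-1}$ extends the old one. The task is then to realize $A^{n-1}$ as a generic member of a pencil of hypersurfaces (or complete intersections) in $\p^n$. Natural devices here are theta-functional embeddings via $|k\Theta|$ and Kummer-type quotients, which allow one to match quotients of $A^{n-1}$ with concrete singular hypersurfaces of moderate degree; when the pencil produces only such a quotient (e.g.\ a Kummer variety), one passes to a double cover ramified along the relevant singular locus to recover $A^{n-1}$ itself, and compensates this passage by a birational modification of the ambient $\p^n$.

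The principal obstacle, by a wide margin, is verifying that the total space of the resulting family is birational to $\p^n$: a generic $\p^1$-family of Abelian $(n-1)$-folds has nonnegative Kodaira dimension and is therefore \emph{not} rational, so a very particular family is needed. The plan, following the strategy of \cite{mumford-such-surf} alluded to in the introduction (and to be implemented in the intermedia of \ref{subsection:misc-5}), is to specialize the family to a maximally degenerate fiber over a $p$-adic field: Mumford's $p$-adic uniformization then presents the degenerate Abelian variety as the quotient of a split algebraic torus $\mathbb{G}_m^{n-1}$ by a $p$-adic lattice, making the total space (almost) toric and hence rational in that regime. One then compares with an explicit rational model of $\p^n$ at the same prime and transports the birational equivalence back to characteristic zero by a flat-extension argument, checking that the birational map extends across specialization. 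The rully structure essentially comes along for the ride, since it is built into the choice of isogeny $A^{n-2}\times E\to A^{n-1}$ and survives the chosen toric degeneration; the entire difficulty concentrates in the $p$-adic rationality comparison.
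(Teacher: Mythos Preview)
Note first that the statement is labelled a \emph{Conjecture}: the paper itself offers only evidence and a sketch (Sections~\ref{section:misc} and~\ref{section:misce}), not a complete proof, so at best you were reconstructing that sketch.

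Your plan has a genuine gap at the decisive step. Mumford's $p$-adic uniformization presents a totally degenerate Abelian variety \emph{rigid-analytically} as $\mathbb{G}_m^{n-1}/\Lambda$, but algebraically the object is still an Abelian variety, with no rational curves; it is not toric or rational in any sense that helps here. So the clause ``making the total space (almost) toric and hence rational in that regime'' is a non-sequitur: a rigid-analytic torus presentation of one fibre says nothing about rationality of the $n$-dimensional total space of the pencil. The subsequent ``flat-extension argument'' transporting a birational equivalence from a special fibre back to the generic one is likewise invalid---rationality does not propagate in that direction.

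The paper's route is substantially different from yours. It works over a global field of characteristic $p>0$ with $A^d$ \emph{ordinary} (not totally degenerate); the key input is Theorem~\ref{theorem:manin-wide-a-structure}, that $\widehat{A^d}\sim G_m^{\oplus d}$ as formal groups. Drinfel'd modules are used to make this identification canonical (Proposition~\ref{theorem:for-gr-i}, Corollary~\ref{theorem:theta-can}), yielding a topological homeomorphism $E(\fie_\infty^s)\simeq\p^1(\fie_\infty^s)$. This is then upgraded to algebra via anabelian methods: one obtains an isomorphism $G^\ell_S\simeq G^\ell_{\p^2}$ of pro-$\ell$ Galois groups for $S=E\times\p^1$ (Proposition~\ref{theorem:g-ell-comp}), and Pop's pro-$\ell$ reconstruction theorem produces a correspondence $S\leftarrow W\to\p^2$ from which the Abelian pencil on $\p^2$ (and analogously on $\p^{n+1}$, with $\prod_i E_i$ in place of $E$) is extracted. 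The lift to characteristic zero is argued by lifting the relevant line bundle on $\prod_i E_i$, not by spreading out a birational map. There is no induction on $n$ in the paper's argument; the geometric evidence in Section~\ref{section:misce} proceeds instead through Looijenga-type quotients $E^n/R$ and Shokurov's toric characterisation.
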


Let $\mu_n$ be the cyclic group of order $n$. An elementary
manifestation of Conjecture~\ref{theorem:p-n-ab-pen} is the
rational surface $(E\times\p^1)\slash\mu_2$ fibred by elliptic
curves $\simeq E$ (cf. Example~\ref{example:halphen-pen} below).
At the same time, one can not extend the same construction
verbatim to the case of the quotient
$(E^n\times\p^1)\slash\mu_n^{\oplus n},~n\ge 2$, with
$\mu_n^{\oplus n}$ acting diagonally (and componentwise on $E^n$)
via $\pm$. However, we will reincarnate similar, ``geometric",
approach to Conjecture~\ref{theorem:p-n-ab-pen} later in
Section~\ref{section:misce}.

Let us now collect some examples and heuristics in support of
Conjecture~\ref{theorem:p-n-ab-pen} (the reader will find more
discussion in Section~\ref{section:misc}).

\begin{example}
\label{example:halphen-pen} Given an elliptic curve $E$ and nine
points $P_1,\ldots,P_9\in E$, the condition $m\sum_{i=1}^9 P_i =
0$ in the group $E(\com)$, for some $m\ge 1$, is equivalent to the
existence of a curve $Z\subset\p^2$ having $\deg Z = 3m$ and
$\text{mult}_{P_i}(Z) = m$ for all $i$. If $f=0, g=0$ are
equations of $E, Z$ respectively, then generic curve on $\p^2$
given by $\lambda f^m + \mu g = 0$, $\lambda,\mu\in\com$, is
birational to an elliptic curve. This is the example of a
\emph{Halphen pencil} on $\p^2$ (thus
Conjecture~\ref{theorem:p-n-ab-pen} is trivial when $n=2$) and in
fact any Halphen pencil ($=1$-dimensional linear system with
generic element birational to an elliptic curve) on $\p^2$ is
reduced to this one (for an appropriate $m$) via Cremona
transformations. This is a classical result
(\emph{Dolgachev\,--\,Bertini theorem}), proved in
\cite{dolg-bert}, which provides one with a huge source of
problems of similar type for other rationally connected varieties,
such as existence and (explicit) description of Halphen pencils
(i.\,e. generic hypersurface in the pencil is required to have
Kodaira dimension $0$) on these varieties (compare with
Conjecture~\ref{theorem:p-n-ab-pen} or
Theorem~\ref{theorem:vanya-ilya} below for instance).
Unfortunately, the case of $\p^2$ seems to be rather an exception
(a gem if one likes), since in general one should not expect
similar neat (or ``contact") description of special fibrations on
$\p^n$ say, $n\ge 3$. In the latter case for example, if $A^{n-1}$
is as in Conjecture~\ref{theorem:p-n-ab-pen}, then its image in
$\p^n$ is necessarily a non-normal (!) hypersurface, as one easily
elaborates via the Lefschetz-type theorem(s) (see e.\,g.
\cite[3.1]{laz}), together with \cite[Theorem 11]{kol-lars} and
\cite[Corollaries 1.3, 1.5]{hac-mc}.\footnote{We should mention
here a construction, which is due to J.\,Koll\'ar, justifying
Conjecture~\ref{theorem:p-n-ab-pen} in the case $n = 3$. Namely,
let $E$ be an elliptic curve with the $\mu_3$-complex
multiplication. Then the quotient $X := (E\times
E\times\p^1)\slash\mu_3$ by (non-trivial) diagonal $\mu_3$-action
is a rational $3$-fold with a map $X \dashrightarrow \p^1$ whose
general fiber $= E\times E$.}
\end{example}

To overcome the difficulty pointed out in
Example~\ref{example:halphen-pen}, we develop the above
observation with $(E\times\p^1)\slash\mu_2$ further. The idea is
to use (annoying) similarity between two ``torus-like" objects:
$A^n(\com) = \com^n\slash\cel^n$ and $(\com^*)^n$. We would like
to stress that this similarity between two different types of tori
becomes even stronger when the ground field $\fie = \com$ is
replaced by a field having $\text{char} = p > 0$. Namely, if
$\fie$ is the global field $\f_p(t)$ say, and $A^n$ is
\emph{ordinary} (see {\ref{subsection:misc-3}} below), we indicate
in Section~\ref{section:misc} that (after completing and closing
$\fie$ further) the underlying topological spaces $A^n(\fie)$ and
$\p^n(\fie)$ are \emph{homeomorphic}. This is done in a framework
very much typical to the one developed along the uniformization
theories for modular varieties (see Intermedia in
{\ref{subsection:misc-5}} for discussion and references). The only
difference is that instead of considering moduli (cf.
{\ref{subsection:int-5}}) we stick to the patching data inscribed
into the one solid $A^n$. This is a familiar type of duality one
meets in K\"ahler geometry for instance, where variation of
K\"ahler structure $(V,\omega),\omega\in H^2(V,\cel)$, on a
compact complex manifold $V$ may equally be seen in terms of the
action of the symplectomorphism group $\text{SDiff}(V,\omega)$ on
$V$ (see e.\,g. \cite{donaldson-kahler-hamiltonian}). Again we
postpone the discussion of further analogies until
Section~\ref{section:misc} and go on with applications of
Conjecture~\ref{theorem:p-n-ab-pen}.

\begin{remark}
\label{remark:manin-arakelov} Take the $3$-dimensional hyperbolic
space $\mathbb{H} := \com \times\mathbb{R}_{\ge0}$ acted
(isometrically) by a free subgroup $\Gamma\subset
\text{PSL}(2,\com)$ in $g\ge 2$ generators. The action of $\Gamma$
extends to the one on the ideal boundary of $\mathbb{H}$
identified with $\p^1(\com)$. Let $\Lambda_{\Gamma}\subset\p^1$ be
the closure of the attractive and repulsive fixed points for all
the elements $\gamma\in \Gamma$. Then for the complement
$\Omega_{\Gamma} := \p^1\setminus{\Lambda_{\Gamma}}$, the quotient
$X := \Omega_{\Gamma}\slash\Gamma$ is a Riemann surface of genus
$g$, and the covering $\Omega_{\Gamma} \map X$ is called
\emph{Schottky uniformization} of $X$ (in fact every (oriented)
Riemann surface admits a Schottky uniformization). Again, in line
with the previous discussion let us note that $X \subset
\mathbb{H}\slash/\Gamma$, a $3$-dim handlebody of genus $g$. We
refer to \cite{manin-arak} (especially, to a beautiful parallel
with the $p$-adic case, treated in \cite{mumford-curves-uni},
\cite{mumford-ab-vars-uni}) for further illustrations and
applications of this ``introducing-extra-dimension" principle.
This is another supplement for treating geometric objects over a
global field of $\text{char}>0$ (cf. {\ref{subsection:misc-5}}
below).
\end{remark}

\refstepcounter{equation}
\subsection{}
\label{subsection:int-6}

The main result actually proved in this paper is the following:

\begin{theorem}
\label{theorem:main} Assuming Conjecture~\ref{theorem:p-n-ab-pen},
suppose $X$ is \emph{stably rational}, i.\,e. $X \times \p^k
\approx \p^{n+1}$ for some $k$ and $n = \dim X+k-1$. Then one (or
both) of the following holds:

\begin{itemize}

\item $X$ is rationally fibred by hypersurfaces of negative
Kodaira dimension;

\smallskip

\item $X$ is b.-\,H.

\end{itemize}

\end{theorem}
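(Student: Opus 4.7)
The overall approach is to transport the fibration granted by Conjecture~\ref{theorem:p-n-ab-pen} along the stable birational equivalence and analyze when it descends to $X$. Applying the conjecture to $\p^{n+1}$ produces a rational fibration $\psi: \p^{n+1} \dashrightarrow \p^1$ whose generic fiber is birational to some rully Abelian variety $A^n$ of dimension $n = \dim X + k - 1$. Via $\p^{n+1} \approx X \times \p^k$ this becomes $\tilde\psi: X \times \p^k \dashrightarrow \p^1$ with generic fiber $Y_t := \tilde\psi^{-1}(t) \approx A^n$. The first step is to rule out $\tilde\psi$ factoring through either projection $p_1: X \times \p^k \to X$ or $p_2: X \times \p^k \to \p^k$: in those cases $Y_t$ would take the form $F \times \p^k$ (with $F$ a generic fiber of a rational map $X \dashrightarrow \p^1$) or $X \times H$ (with $H$ a hypersurface in $\p^k$), both of which are uniruled -- since $\p^k$ is uniruled and $X$ is rationally connected by stable rationality -- while $A^n$ is not. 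As uniruledness is a birational invariant of smooth projective varieties, both cases contradict $Y_t \approx A^n$.

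Consequently $\tilde\psi$ genuinely mixes $X$- and $\p^k$-coordinates, so that for generic $y \in \p^k$ the restriction $\psi_y := \tilde\psi|_{X \times \{y\}}: X \dashrightarrow \p^1$ is dominant, producing a rational fibration of $X$ over a curve. Its generic fiber $\psi_y^{-1}(t) \subset X$ is naturally identified with the fiber of the projection $p_2|_{Y_t}: A^n \approx Y_t \dashrightarrow \p^k$ over $y$. Here I invoke the rigidity of dominant morphisms from Abelian varieties: such a map factors as $A^n \to A^n/B \to \p^k$, where $B \subset A^n$ is the stabilizer subgroup (its identity component $B^\circ$ being a sub-Abelian variety) and $A^n/B \to \p^k$ is generically finite. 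A dimension count forces $\dim B^\circ = \dim X - 1$, and the rullyness of $A^n$ guarantees the existence of a sub-Abelian of this dimension inside $A^n$. Therefore the connected components of the generic fibers of $\psi_y$ are birational to translates of $B^\circ$, itself an Abelian variety of dimension $\dim X - 1$. Taking the Stein factorization of $\psi_y$ produces $X \dashrightarrow C \to \p^1$ where $X \dashrightarrow C$ is a rational fibration over a curve $C$ with connected fibers birational to $B^\circ$; this exhibits $X$ as b.-\,H., giving the second alternative of the theorem.

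The main obstacle lies in the rigidity step: verifying that the generic fiber of $A^n \dashrightarrow \p^k$ really is birational to a sub-Abelian of the correct dimension depends on the stabilizer $B$ not degenerating. Should it degenerate -- for instance, if $B^\circ$ has dimension strictly less than $\dim X - 1$, so that the fiber acquires a positive-dimensional uniruled component -- then the Stein factorization of $\psi_y$ would still yield a rational fibration of $X$ over a curve, but with generic fiber of negative Kodaira dimension, recovering the first alternative of the theorem. In this way the dichotomy in the conclusion corresponds to the two possible outcomes of the rigidity analysis, both of which are controlled by the sub-Abelian filtration encoded in the rullyness of $A^n$.
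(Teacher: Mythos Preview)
Your argument has a genuine gap at the rigidity step. The factorization $A^n\to A^n/B\to\p^k$ you invoke is a theorem about \emph{morphisms} from Abelian varieties; here the composite $A^n\approx Y_t\hookrightarrow X\times\p^k\to\p^k$ is only a \emph{rational} map, because the identification $A^n\approx Y_t$ is merely birational. For rational maps the conclusion fails outright: if $A$ is an Abelian surface embedded in $\p^N$ by a very ample $|L|$ and one projects generically to $\p^1$, the generic fiber is a curve in $|L|$ of genus $1+L^2/2>1$, hence of general type and not a translate of any Abelian subvariety. In general, the closure in $A^n$ of a generic fiber of $A^n\dashrightarrow\p^k$ is a subvariety of $A^n$, so Ueno's theorem gives $\kappa\ge 0$, but nothing forces $\kappa=0$; your dichotomy ``$B^\circ$ has the right dimension / the fiber becomes uniruled'' omits exactly this case and is also inconsistent with the Ueno bound. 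The rullyness of $A^n$ does not help here: it guarantees that a sub-Abelian of the correct dimension \emph{exists}, not that the fiber of your particular rational map is one.

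This is precisely the difficulty the paper's proof is built to overcome, and it does so by a rather different route. Instead of slicing by $\p^k$-fibers, the paper looks at the other projection $\phi:A^n\dashrightarrow X$, resolves it, and flattens the resulting morphism $\sigma':Y\to X$. It then \emph{actively} uses rullyness by choosing a specific $A^{d-1}\subset A^n$, pushing it forward to a divisor $\sigma'(\Omega)\subset X$, and showing the associated linear system $|S|$ on $Y$ is basepoint-free via $\mathrm{Alb}(X)=0$. The core of the argument (Proposition~\ref{theorem:kapa-0}) is a surface-by-surface analysis of generic $\Sigma\in|S|$ against the quotient map $A^n\to A^n/A^{d-1}$, separating the cases that lead to the first alternative. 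Only after this reduction can one invoke the non-trivial inputs \cite[Theorem~11]{kol-lars} (to make $\sigma'\big\vert_{\Omega}$ \'etale) and \cite[Theorem~11.1]{kollar-shaf} (to recognize $\sigma'(\Omega)$ as birationally Abelian from $q=d-1$ and $\kappa=0$). Your sketch attempts to bypass all of this with a one-line rigidity appeal that simply does not apply in the rational-map setting.
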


Given Theorem~\ref{theorem:main}, it is tempting to produce
examples of non-stably rational rationally connected varieties, in
addition to linear quotients and conic bundles discussed
above.\footnote{Note that any cubic $3$-fold $X_3\subset\p^4$ is
b.-\,H., since $X \approx \p^3\slash\tau$ for a birational
involution $\tau\in\text{Bir}(\p^3)$, and so the arguments in the
proof of Corollary~\ref{theorem:main-cor-a} apply.} Let us first
mention the following result (see also
\cite{vanya-kod-0-in-vasyas-paper}, \cite{jihun-vanya}):

\begin{theorem}[{see \cite{isk-man}, \cite{corti}, \cite{vanya-ilya}}]
\label{theorem:vanya-ilya} Any smooth quartic $3$-fold
$X_4\subset\p^4$ is not b.-\,H. and can not be rationally fibred
by hypersurfaces of negative Kodaira dimension.
\end{theorem}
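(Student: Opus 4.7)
The plan is to derive both non-existence statements from the birational rigidity of the smooth quartic threefold established in \cite{isk-man} and rebuilt via the Sarkisov program in \cite{corti}: \emph{any Mori fiber space birational to $X_4$ is $X_4\to\mathrm{pt}$ itself}. Both assertions will thus be reduced to producing, from the alleged fibration, a Mori fiber structure on a model of $X_4$ with positive-dimensional base, or to a direct contradiction via the method of maximal singularities. Throughout, $H=-K_{X_4}$, so $H^3=4$.

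Suppose first that $X_4$ admits a rational fibration $\phi\colon X_4\dashrightarrow B$ by surfaces of negative Kodaira dimension. Resolving to a morphism $\tilde\phi\colon\tilde X\to B$ with $\tilde X\approx X_4$ smooth and $B$ a smooth curve (necessarily $\p^1$, since $X_4$ is rationally connected), the generic fiber $F$ is uniruled, hence $K_{\tilde X}$ is not $\tilde\phi$-pseudo\-effective. Relative MMP over $B$ then terminates in a Mori fiber space $\tilde X'\to B'$ with $\dim B'\ge 1$, birational to $X_4$, directly violating the rigidity stated above.

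Now suppose $X_4$ is b.-H., so that a pencil $\mathcal{M}\subset|nH|$ has general member birational to an abelian surface $A^2$. Since a general member of $|H|$ on $X_4$ is a smooth quartic K3, not abelian, we must have $n\ge 2$. This case is harder than the previous one because $K_F\equiv 0$ makes relative MMP over $B$ produce a \emph{relative minimal model}, not a Mori fiber space, so rigidity cannot be invoked formally. Instead, one works directly with the pair $(X_4,\frac{1}{n}\mathcal{M})$: a Noether--Fano style argument adapted to a mobile pencil shows that, were such a pencil to exist, the pair would fail to be canonical, i.e.\ there would exist a \emph{maximal singularity} --- a divisorial valuation $v$ with $v(\mathcal{M})>n\,a(v,X_4)$ --- whose centre $Z\subset X_4$ is either a curve or a point.

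The main obstacle, and the heart of the proof in \cite{isk-man}, \cite{vanya-ilya}, is the geometric exclusion of both options for $Z$. If $Z$ is a curve, combining $\mathrm{mult}_Z\mathcal{M}>n$ with the global intersection $\mathcal{M}_1\cdot\mathcal{M}_2\cdot H=n^2H^3=4n^2$ (for two general members) forces $\deg Z\le 3$, and a direct inspection of lines, conics and plane cubics on a smooth quartic $3$-fold rules out each as a support of a maximal singularity of $\mathcal{M}$. If $Z$ is a point, one invokes the Iskovskikh--Manin $4n^2$-inequality (sharpened by Pukhlikov and Corti), giving $\mathrm{mult}_Z(\mathcal{M}_1\cdot\mathcal{M}_2)>4n^2$, which again contradicts $\mathcal{M}_1\cdot\mathcal{M}_2\cdot H=4n^2$ once one bounds the local intersection multiplicity at $Z$ by a general hyperplane section through $Z$. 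Closing both case analyses simultaneously for uniruled and abelian generic fibers finishes the proof, with the genuine technical work concentrated in the point-case $4n^2$-inequality.
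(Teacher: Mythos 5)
Your first half is fine: a rational fibration of $X_4$ by surfaces of negative Kodaira dimension is excluded exactly as you say, by resolving to a morphism over $\p^1$, running a relative MMP (the generic fiber is uniruled, so $K$ is not relatively pseudoeffective), and contradicting the birational superrigidity of the smooth quartic from \cite{isk-man}, \cite{corti}; this is precisely the route the citation intends.

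The b.-H.\ half, however, has a genuine gap at the step ``a Noether--Fano style argument shows the pair $(X_4,\frac{1}{n}\mathcal{M})$ would fail to be canonical.'' For a pencil whose general member has Kodaira dimension zero the Noether--Fano mechanism does \emph{not} produce a strict maximal singularity: on a log resolution $f\colon W\to X_4$ one has $K_W+\frac1n\mathcal{M}_W=f^*\bigl(K_{X_4}+\frac1n\mathcal{M}\bigr)+\sum\bigl(a_i-\frac{m_i}{n}\bigr)E_i$, and restricting to a general resolved member $S$ (a fiber of the induced map to $\p^1$, so $\mathcal{M}_W\big\vert_S\sim 0$) gives only $K_S\sim_{\ra}\sum\bigl(a_i-\frac{m_i}{n}\bigr)E_i\big\vert_S$; the condition $\kappa(S)=0$ is compatible with all these coefficients being $\ge 0$, so at best one extracts a valuation with $m_E\ge n\,a(E)$, i.e.\ non-terminality, never the strict inequality $m_E>n\,a(E)$ (the pencil of hyperplane sections, with base curve of multiplicity one, is exactly the equality case and is an honest Halphen pencil). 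With only the non-strict inequality your numerics collapse: the curve case yields $\deg Z\le 4$ rather than $\le 3$, and in the point case you get $\operatorname{mult}_P(\mathcal{M}_1\cdot\mathcal{M}_2)\ge 4n^2$, which no longer contradicts $\mathcal{M}_1\cdot\mathcal{M}_2\cdot H=4n^2$. Handling these borderline (canonical but non-terminal) centers, including infinitely near ones, is precisely the technical core of \cite{vanya-ilya}, whose result is a classification -- every Halphen pencil on $X_4$ lies in $|-K_{X_4}|$ -- rather than a quick exclusion; and to conclude ``not b.-H.'' from it one still needs the small remark that a general member of a pencil of hyperplane sections is a normal quartic surface in $\p^3$, whose resolution has irregularity zero, hence is not birational to an abelian surface. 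Your dismissal of $n=1$ via ``a general member of $|H|$ is a smooth K3'' glosses over the same point, since a general member of a pencil inside $|H|$ need not be a general member of $|H|$.
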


Theorems~\ref{theorem:main}, \ref{theorem:vanya-ilya} and
Conjecture~\ref{theorem:p-n-ab-pen} yield

\begin{cor}
\label{theorem:main-cor} Any smooth quartic $3$-fold
$X_4\subset\p^4$ is not stably rational.
\end{cor}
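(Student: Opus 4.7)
The plan is to argue by contradiction, feeding the hypotheses of Theorem~\ref{theorem:main} from a supposed stable rationality of $X_4$ and then reading off an immediate clash with Theorem~\ref{theorem:vanya-ilya}. The three ingredients (Conjecture~\ref{theorem:p-n-ab-pen}, Theorem~\ref{theorem:main}, and Theorem~\ref{theorem:vanya-ilya}) dovetail so tightly that no further geometric input on $X_4$ will be needed.

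More precisely, I would first assume that the smooth quartic $X_4\subset\p^4$ is stably rational, so that $X_4\times\p^k\approx\p^{n+1}$ for some $k\ge 0$ with $n=\dim X_4+k-1=k+2$. Granting Conjecture~\ref{theorem:p-n-ab-pen}, Theorem~\ref{theorem:main} then applies to $X=X_4$ and forces at least one of the two alternatives: either $X_4$ carries a rational fibration whose generic hypersurface fibre has negative Kodaira dimension, or $X_4$ is b.-\,H., i.e.\ admits a rational fibration over a curve whose generic fibre is birational to an Abelian variety.

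At this point I would invoke Theorem~\ref{theorem:vanya-ilya}, which asserts that a smooth quartic $3$-fold in $\p^4$ possesses \emph{neither} of these two types of rational fibrations. Both alternatives produced by Theorem~\ref{theorem:main} are therefore excluded, giving the desired contradiction and hence the non-stable-rationality of $X_4$.

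The main obstacle in this argument is not really in the corollary itself but upstream: the whole deduction rests on Conjecture~\ref{theorem:p-n-ab-pen} (and on the rigidity statements of \cite{isk-man}, \cite{corti}, \cite{vanya-ilya} packaged into Theorem~\ref{theorem:vanya-ilya}). Once those inputs are taken on faith, the corollary itself is a purely formal two-line syllogism, and the only care required is to ensure that the dimension bookkeeping $n=\dim X_4+k-1\ge 2$ makes Theorem~\ref{theorem:main} genuinely applicable for every admissible $k$, which is automatic since $\dim X_4=3$.
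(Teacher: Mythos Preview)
Your proposal is correct and matches the paper's own treatment: the paper simply states that Theorems~\ref{theorem:main}, \ref{theorem:vanya-ilya} and Conjecture~\ref{theorem:p-n-ab-pen} yield the corollary, and your argument spells out precisely this syllogism (assume stable rationality, apply Theorem~\ref{theorem:main} under Conjecture~\ref{theorem:p-n-ab-pen}, then contradict Theorem~\ref{theorem:vanya-ilya}). No additional ideas are present in the paper's derivation.
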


Theorem~\ref{theorem:main} is proved in Section~\ref{section:pro}
(see Remark~\ref{remark:illustr} for an outline). Let us indicate
that Theorem~\ref{theorem:main} relies mostly on Theorem 11 in
\cite{kol-lars} (and is similar, in a way, to Theorem 14 in
\emph{loc.\,cit.}) and on \cite[Theorem 11.1]{kollar-shaf}, and
these are the only (essentially) non-trivial results used in the
proof. We will return to the exposition of \cite{kol-lars} later
in Section~\ref{section:misce}.

\begin{remark}
\label{remark:rully-uni-rat} In the setting of
Conjecture~\ref{theorem:p-n-ab-pen}, it is easy to modify the
arguments of Section~\ref{section:pro} in such a way that the
assertion of Theorem~\ref{theorem:main} still holds, with $\p^n$
replaced by some \emph{unirational} b.-\,H. variety $\p$ with a
transitive regular action of the group $\aut(U)$ on a Zariski open
subset $U\subseteq\p$ (as one just needs irreducibility of
intersections of $x\times\p^k$, for various (generic) $x\in X$,
with (birationally) rully Abelian hypersurfaces in $X\times\p^k$).
\end{remark}

\begin{cor}
\label{theorem:main-cor-a} Let $X$ be as in
Question~\ref{question:colliot}. Then the same options as in
Theorem~\ref{theorem:main} hold in this case.
\end{cor}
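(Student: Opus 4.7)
The plan is to reduce to Theorem~\ref{theorem:main} via the modification spelled out in Remark~\ref{remark:rully-uni-rat}. Unwinding Question~\ref{question:colliot}, let $Y$ be the promised model of $X\times\p^k$ (for some $k\ge 0$) which is infinitely transitive and admits the transitive regular action of $\aut(U)$ on a Zariski open $U\subseteq Y$; since $X$ is unirational so is $Y$. In order to apply Remark~\ref{remark:rully-uni-rat} with the role of $\p^n$ now played by $Y$, the single missing ingredient is that $Y$ itself be b.-\,H.

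To establish b.-\,H.\ for $Y$, I would fix a dominant rational map $\pi\colon\p^N\dashrightarrow Y$ afforded by unirationality of $Y$, resolve its indeterminacy, and pull back the rully Abelian fibration $\p^N\dashrightarrow\p^1$ provided by Conjecture~\ref{theorem:p-n-ab-pen}. The generic fiber $A^{N-1}\subset\p^N$ maps under $\pi$ to a rational family of subvarieties of $Y$; invoking \cite[Theorem~11]{kol-lars} together with \cite[Theorem~11.1]{kollar-shaf} to control the Kodaira dimension of images and of generic fibers, one extracts a rational fibration $Y\dashrightarrow B$ whose generic fiber is either (birationally) rully Abelian---making $Y$ b.-\,H.---or of negative Kodaira dimension. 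In the latter case, projecting along $Y\approx X\times\p^k\to X$ already delivers the first option of Theorem~\ref{theorem:main} for $X$, and we are done.

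Assuming instead the former alternative, Remark~\ref{remark:rully-uni-rat} applies directly to $Y$: for generic $x\in X$, I would intersect $x\times\p^k\subset Y$ with a generic rully Abelian hypersurface of the fibration of $Y$; the transitivity of $\aut(U)$ ensures that these intersections are irreducible uniformly in $x$. Projecting via $X\times\p^k\to X$ then transports the rully Abelian (respectively, negatively-Kodaira-dimensional, in the degenerate case) nature of the slices to a fibration on $X$ itself, yielding one of the two options of Theorem~\ref{theorem:main}. The main obstacle is the first step---passing from Conjecture~\ref{theorem:p-n-ab-pen} on $\p^N$ to b.-\,H.\ on $Y$---since a dominant rational map does not a priori preserve the rully Abelian nature of fibers of a fibration; here both the unirationality hypothesis (furnishing $\pi$) and the transitive group-action hypothesis (rigidifying the generic fiber of $Y\dashrightarrow B$ across $Y$) are genuinely needed, and the Kodaira-dimension dichotomy is exactly what \cite[Theorem~11]{kol-lars} is designed to produce.
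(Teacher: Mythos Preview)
Your reduction to Remark~\ref{remark:rully-uni-rat} is the right endgame, but the step you flag as ``the main obstacle'' is genuinely broken. You propose to show the model $Y$ is b.-\,H.\ by taking a dominant $\pi\colon\p^N\dashrightarrow Y$ from unirationality and pushing the Abelian hypersurfaces $A^{N-1}\subset\p^N$ forward. But $N$ can (and typically will) exceed $\dim Y$, so the generic fiber of $\pi$ has positive dimension; a codimension-$1$ subvariety $A^{N-1}$ then dominates $Y$, and $\pi(A^{N-1})=Y$ rather than a hypersurface. There is no mechanism here to produce a pencil of (birationally) Abelian \emph{hypersurfaces} on $Y$, and invoking \cite[Theorem~11]{kol-lars} does not help: that result controls Kodaira dimension of images under generically finite maps from Abelian varieties, not the passage from a fibration on the source to a fibration on the target of a dominant rational map.

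The paper's argument avoids this entirely by exploiting the hypothesis you left unused: the finite-dimensional algebraic group $\frak{G}$ with a Zariski-dense orbit. Up to stable birational equivalence one arranges $X\times\p^k=\frak{G}/H$ with $\frak{G}$ a connected algebraic group and $H$ \emph{finite} (see \cite{bkk}, \cite{bog-brauer-conn-lin-groups}); by Chevalley \cite{chev}, $\frak{G}$ is itself \emph{rational}, so Conjecture~\ref{theorem:p-n-ab-pen} applies directly to $\frak{G}$, furnishing a pencil $\{A_t\}$ of rully Abelian hypersurfaces there. The remaining work is to descend this pencil through the finite quotient $q\colon\frak{G}\to\frak{G}/H$: one uses the infinite transitivity (an element of $\text{Bir}(\frak{G})$ fixing a generic $P$ and moving the rest of $H\cdot P$ off $A_t$) to ensure a generic $A_{t'}$ meets each $H$-orbit once, so $q$ restricted to $A_{t'}$ is birational onto its image, and rational connectedness of $\frak{G}/H$ upgrades rational equivalence of the $q_*A_t$ to linear equivalence. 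Then Remark~\ref{remark:rully-uni-rat} finishes. The key point you are missing is that rationality of $\frak{G}$---not mere unirationality of $Y$---is what places the Abelian pencil on the correct variety to begin with.
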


\begin{proof}
Indeed, since $X$ is considered up to stable birational
equivalence, we may assume that $X \times \p^k = \frak{G}\slash H$
for some $k$, where $\frak{G}$ is a connected algebraic group and
$H\subset \frak{G}$ is a finite subgroup (see \cite{bkk},
\cite{bog-brauer-conn-lin-groups}). Then notice that
$\frak{G}\approx\p^n$ (see \cite{chev}), with $n\gg 1$, so that
one finds a $1$-parameter family $\{A_t\}_{t\in\com}$ of
(birationally) Abelian hypersurfaces in $\frak{G}$ (with $A_t\sim
A_t'$ (linearly equivalent) for all $t,t'$, as
Conjecture~\ref{theorem:p-n-ab-pen} predicts). Pick a generic
point $P \in \frak{G}$ and some $A_t\ni P$. It suffices to
establish that $(H\cdot P) \cap A_t = \{P\}$
(scheme-theoretically) for the $H$-orbit of $P$. Indeed, if this
is the case, then for generic $t'$ the hypersurface $A_{t'}$ maps
isomorphically into $\p: = \frak{G}\slash H$ under the quotient
morphism $q: \frak{G} \map \p$, and the cycles $q_*(A_t)$,
$q_*(A_{t'})$ are rationally equivalent. But $\p$ is rationally
connected, which implies that in fact $q_*(A_{t})\sim
q_*(A_{t'})$, and we are done by
Remark~\ref{remark:rully-uni-rat}.

Now, in order to achieve the identity $(H\cdot P) \cap A_t =
\{P\}$, we take an appropriate $\sigma\in\text{Bir}(\frak{G})$
such that $\sigma(P) = P$ and $\sigma(H\cdot
P\setminus{\{P\}})\cap A_t=\emptyset$ (cf.
{\ref{subsection:int-4}}). It remains to replace $A_t$ by
$\sigma^{-1}(A_t)$.
\end{proof}

The next example provides negative answer to
Question~\ref{question:colliot}:

\begin{example}
\label{example:neg-answ-colliot} The quartic $X_4\subset\p^4$ with
equation $x_0^4 + x_1^4 + x_2^4 + x_3^4 + x_0x_4^3 + x_1^3x_4 -
6x_2^2x_3^2 = 0$ is smooth and unirational (see e.\,g.
\cite{isk-man}). One concludes via
Corollary~\ref{theorem:main-cor-a} and
Theorem~\ref{theorem:vanya-ilya}.
\end{example}

\bigskip

\section{Proof of Theorem~\ref{theorem:main}}
\label{section:pro}

\refstepcounter{equation}
\subsection{}
\label{subsection:pre-1}

We use notations from {\ref{subsection:int-66}}. Throughout this
section $A^{n}$ is supposed to be rully. Recall that there is a
generically $1$\,-to\,-$1$ (onto its image) map from $A^{n}$ to
$\p^{n+1}$ (since Conjecture~\ref{theorem:p-n-ab-pen} holds by our
assumption). Then, composing the natural projection
$X\times\p^k\map X$ with a birational isomorphism
$X\times\p^k\approx\p^{n+1}$, we obtain a rational dominant map
$\phi: A^{n}\dashrightarrow X$.

\begin{lemma}
\label{theorem:fiber-irred} The general fiber of $\phi$ is
irreducible ($k\ge 2$).
\end{lemma}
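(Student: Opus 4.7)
My plan is to realize the general fiber of $\phi$ as a (birational) hypersurface section of the image of $A^n$ in $\p^{n+1}$, and then invoke a Bertini-type irreducibility argument. First, I factor $\phi$ through this hypersurface image: the generically $1$-to-$1$ map $A^n\to\p^{n+1}$ provided by Conjecture~\ref{theorem:p-n-ab-pen} has as its image an irreducible hypersurface $H\subset\p^{n+1}$ birational to $A^n$, and under $\p^{n+1}\approx X\times\p^k$ this $H$ transports to an irreducible hypersurface $H'\subset X\times\p^k$ birational to $A^n$. Then $\phi$ is (birationally) the restriction to $H'$ of the projection $X\times\p^k\to X$, and for generic $x\in X$ the fiber $\phi^{-1}(x)$ is birational to $H'\cap(\{x\}\times\p^k)$, a hypersurface of dimension $k-1$ inside $\{x\}\times\p^k$.

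The lemma therefore reduces to showing that this hypersurface section is irreducible for generic $x$, equivalently that the generic fiber of $H'\to X$ is geometrically integral, equivalently that $\fie(X)$ is algebraically closed in $\fie(H')=\fie(A^n)$. Writing $H'$ as the zero locus of $F(x;y)\in \fie(X)[y_0,\ldots,y_k]$ (homogeneous in $y$ and irreducible over $\fie(X)$, since $H'$ is irreducible), the point is that $F$ must be absolutely irreducible. The relevant input is Bertini for linear sections: the intersection of an irreducible variety $H\subset\p^{n+1}$ of dimension $n$ with a sufficiently generic linear subspace of codimension $n+1-k$ is irreducible provided the intersection has dimension $\ge 1$, i.e. provided $k\ge 2$. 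This explains the hypothesis on $k$ in the statement.

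The main obstacle is that the family $\{\{x\}\times\p^k\}_{x\in X}$ is a specific $(n+1-k)$-dimensional subfamily of the much larger Grassmannian $\mathrm{Gr}(k+1,n+2)$, so a direct application of Bertini requires verifying that this family is not contained in the proper closed locus of $\p^k$'s giving a reducible intersection with $H$. I would handle this by noting that the family covers all of $\p^{n+1}$ (through the birational iso $X\times\p^k\approx\p^{n+1}$), and hence cannot be confined to any degenerate locus of the Grassmannian. If that direct argument does not close cleanly, the backup is a Stein-factorization approach: a hypothetical failure of irreducibility would produce a nontrivial finite cover $X'\to X$ of degree $>1$ through which $\phi$ factors, and one would then seek a contradiction by combining the stable rationality of $X$ (which forces $X$ to have trivial Albanese) with structural constraints on dominant maps $A^n\dashrightarrow X'$ coming from $A^n$ being Abelian (in particular the Albanese of $X'$ must be a quotient of $A^n$, which should be incompatible with $X'$ being finite over the rational $X$ once one uses the rully flag of sub-Abelian varieties $A^{n-1}\subset A^n$).
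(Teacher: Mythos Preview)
Your reduction is the same as the paper's: identify $A^n$ birationally with a hypersurface $H'\subset X\times\p^k$ and study the fibers $H'\cap(\{x\}\times\p^k)$. The gap is in how you try to conclude. Your ``Bertini for linear sections'' step does not apply: under the birational isomorphism $X\times\p^k\approx\p^{n+1}$ the fibers $\{x\}\times\p^k$ are \emph{not} carried to linear subspaces of $\p^{n+1}$ (a birational map has no reason to be linear), so there is no Grassmannian of $k$-planes in which to run Bertini. And even if they were linear, your patching argument ``the family covers $\p^{n+1}$, hence is not confined to a degenerate locus of the Grassmannian'' is false as stated---e.g.\ the lines through a fixed point cover $\p^N$ yet form a proper subvariety of the Grassmannian. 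The Stein-factorization backup is too loose to rescue this: a finite branched cover $X'\to X$ of a rational $X$ can perfectly well have nontrivial Albanese, so the contradiction you sketch does not materialize without further input.

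The paper closes the gap differently, and the missing idea is the automorphism group of the ambient space. Since $X\times\p^k\approx\p^{n+1}$, there is a Zariski open $U\subseteq X\times\p^k$ on which $\aut(U)$ acts transitively; pushing $A^n$ around by an open subgroup $G\subseteq\aut(U)$ keeps it in the same linear equivalence class, so the pencil $\mathcal{P}\ni A^n$ sweeps out a $2$-dimensional family of lines inside a surface $\Pi\subset|A^n|$, whence $\Pi=\p^2$ and one may replace $\mathcal{P}$ by a \emph{net}. A generic member of a net is smooth in codimension $1$, hence (being a hypersurface, so $S_2$) normal. Now one uses $k\ge 2$ only through the trivial fact that a hypersurface in $\p^k$ is connected: each fiber of $H'\to X$ is connected, and with $H'$ normal the birational map from the smooth $A^n$ onto $H'$ has connected fibers (Zariski's Main Theorem), so the general fiber of $\phi$ is connected; being smooth (generic fiber of a dominant map from a smooth source), it is irreducible.
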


\begin{proof}
Identify $A^n$ with a hypersurface in $X\times\p^k$. Then $\phi$
coincides with the restriction to $A^n$ of the projection
$X\times\p^k\map X$. The fibers of the induced morphism $A^n\map
X$ are all of the form $(\text{pt}\times\p^k)\cap A^n$. Thus it
suffices to show that $(\text{pt}\times\p^k)\cap A^n$ is
irreducible for generic $\text{pt}\in X$.

Recall that $A^n$ varies in a pencil $\mathcal{P}$ on
$X\times\p^k$. There is also a Zariski open subset $U\subseteq
X\times\p^k$ with a transitive regular $\aut(U)$-action. Then we
have $A^n\sim g_*A^n$ for all $g\in G$ and an open subgroup
$G\subseteq\aut(U)$. In particular, since $\dim G \gg 1$, there is
a surface $\Pi\subseteq |A^d|$ such that the line $\mathcal{P}$
varies in a $2$-dimensional family on $\Pi$. Hence $\Pi = \p^2$
and we may replace $\mathcal{P}\ni A^n$ by a net. Then (generic)
$A^n$ is smooth in codimension $1$, hence normal, and so $\phi$
has irreducible general fiber.
\end{proof}

Let us resolve the indeterminacies of $\phi$:
$$
\xymatrix{
&&Y\ar@{->}[ld]_{\sigma}\ar@{->}[rd]^{\sigma'}&&\\%
&A^n\ar@{-->}[rr]_{\phi}&&X.&}
$$
Here we take $\sigma$ to be composed of the blow-ups at smooth
centers. In particular, we have $K_Y = \sum a_iE_i$, where $E_i$
are $\sigma$-exceptional divisors and $a_i>0$ for all
$i$.\footnote{To be more precise, one has to include the case $a_j
= 0$ for some $j$ as well. But then $Y = A^n,~\sigma =
\text{id},~\phi$ is regular and it is easy to see that $\kappa(X)
= 0$, a contradiction.} Furthermore, taking generic
$A^{d+1}\subset A^n,~d:=\dim X$, we may stick to the case $n=d+1$
and so the general fiber $F$ of $\phi$ is an irreducible curve
(see Lemma~\ref{theorem:fiber-irred}).

\begin{lemma}
\label{theorem:almost-flat} There is a proper subscheme
$\widetilde{Y}\subset\p(\sigma'_*\omega_{Y/X})$ such that
$\widetilde{Y}\approx Y$ and $\widetilde{Y}\simeq Y$ near $F$.
\end{lemma}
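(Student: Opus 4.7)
The plan is to realize $\widetilde{Y}$ as the image of $Y$ under the relative evaluation map into $\p(\sigma'_*\omega_{Y/X})$. Starting from the adjunction counit $(\sigma')^{*}\sigma'_*\omega_{Y/X}\to\omega_{Y/X}$, one obtains a rational $X$-morphism
$$\psi: Y\dashrightarrow\p(\sigma'_*\omega_{Y/X}),$$
defined away from the support of the cokernel of this counit. The candidate for $\widetilde{Y}$ is then the scheme-theoretic closure of $\psi(Y)$ inside $\p(\sigma'_*\omega_{Y/X})$, which is automatically a proper closed subscheme and birational to $Y$ by construction. What remains is the local isomorphism statement in a Zariski neighborhood of the generic fiber $F$.

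To control $\psi$ over such a neighborhood, I would first replace $X$ by a birational modification over which $\sigma'_*\omega_{Y/X}$ is locally free and $\sigma'$ is flat in codimension one, and pull back $Y$ accordingly. Cohomology-and-base-change then identifies the restriction $\psi\vert_F$ with the canonical morphism $F\to\p(H^0(F,\omega_F)^{\vee})$, since $\omega_{Y/X}\vert_F\simeq\omega_F$ by adjunction (the relative canonical restricts to the canonical of the fiber for a flat family of curves). Provided that the canonical linear system of $F$ is very ample, $\psi$ will then be an open immersion on a Zariski neighborhood of $F$, which is exactly what the lemma asserts.

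The main obstacle is therefore to verify that the general fiber $F$ has very ample canonical class. Here one exploits the specific nature of $F$: after restricting $\phi$ to a generic $A^{d+1}\subset A^n$, the fiber $F$ appears as a sufficiently general curve inside the rully Abelian variety $A^{d+1}$, whose genus and very ampleness of $\omega_F$ can be addressed by adjunction along the flag $A^1\varsubsetneq\ldots\varsubsetneq A^{d+1}$ provided by Definition~\ref{definition:ab-var-rully}. I would expect the positivity of $\omega_F$ to follow either directly from this adjunction calculation or, more indirectly, from the Lefschetz-type considerations cited in Example~\ref{example:halphen-pen} combined with the Koll\'ar--Larsen results \cite{kol-lars} invoked elsewhere in the paper. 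Once very ampleness of $\omega_F$ is in place, the construction and the local isomorphism claim go through formally.
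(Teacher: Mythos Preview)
Your overall architecture --- form the relative canonical map $\psi: Y\dashrightarrow\p(\sigma'_*\omega_{Y/X})$ and take the closure of the image --- is exactly what the paper does (it phrases this as ``restriction of the Hilbert family''). The gap is in how you justify that $\psi$ is an embedding near the general fiber $F$.

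You overlook the one-line reason already set up immediately before the lemma: $\sigma$ is a composition of blow-ups in smooth centers, so $K_Y=\sum a_iE_i$ with all $a_i>0$, where the $E_i$ are $\sigma$-exceptional. Since $F$ is a general fiber of $\sigma'$, adjunction gives $\omega_F\cong K_Y\big\vert_F=\sum a_i\,(E_i\big\vert_F)$; and because $\phi$ genuinely has indeterminacies, some $E_i$ dominates $X$ and hence meets $F$ in a nonzero effective divisor. That is precisely the paper's phrase ``$\sigma_*^{-1}F$ is smooth, l.c.i.\ and canonically polarized by $K_Y=\sum a_iE_i$.'' No appeal to the rully flag, Lefschetz-type theorems, or \cite{kol-lars} is needed, and your proposed route through those results is not an argument as written --- you only ``expect'' positivity to follow, without saying how the flag $A^1\varsubsetneq\ldots\varsubsetneq A^{d+1}$ would control the canonical class of a curve that is, after all, a fiber of a map \emph{to} $X$, not a complete intersection cut out by the flag.

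A smaller point: you propose to first modify $X$ so that $\sigma'_*\omega_{Y/X}$ is locally free and $\sigma'$ is flat. In the paper this flattening is done \emph{after} the lemma (the passage to $\tilde\sigma:\widetilde{Y}\to\widetilde{X}$), not as part of its proof; the lemma itself only asserts the birational/local-near-$F$ statement, for which the generic behavior of $\psi$ suffices.
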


\begin{proof}
Indeed, the curve $\sigma_*^{-1}F$ is smooth, l.\,c.\,i. and
canonically polarized by $K_Y = \sum a_iE_i$, which gives a
subscheme in $\p(\sigma'_*\omega_{Y/X})$ (restriction of the
Hilbert family) birational to $Y$.
\end{proof}

Let $\widetilde{Y}$ be as in Lemma~\ref{theorem:almost-flat} and
$\tilde{\sigma}: \widetilde{Y}\map \widetilde{X}$ be the induced
(flat) morphism ($\widetilde{X}\approx X$). We may replace
$\widetilde{X}$ by its resolution so that $\tilde{\sigma}$ remains
flat. We will also suppose $\widetilde{Y}$ to be normal and CM
(the general case can be reduced to this setting).

\begin{lemma}
\label{theorem:contract} The induced birational map
$\widetilde{Y}\dashrightarrow Y\stackrel{\sigma}{\map} A^n$ is
contracting.\footnote{We refer to \cite{kollar-mori} for standard
notions and facts from the minimal model theory.}
\end{lemma}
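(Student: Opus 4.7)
The plan is to argue by contradiction: suppose the birational map $f:\widetilde{Y}\dashrightarrow Y\stackrel{\sigma}{\to}A^n$ is not contracting, so it extracts some prime divisor $E\subset\widetilde{Y}$ whose image $f(E)\subset A^n$ has codimension at least two. First I would take a common log resolution $W$ sitting over both $\widetilde{Y}$ and $Y$, with morphisms $\pi:W\to\widetilde{Y}$ and $g:W\to Y$, and let $E_W\subset W$ be the strict transform of $E$; then $E_W$ is non-exceptional for $\pi$ but exceptional for $\sigma\circ g$. Since $K_{A^n}=0$ and $A^n$ is smooth (hence terminal), the identity $K_W=(\sigma\circ g)^{*}K_{A^n}+\sum_i b_i F_i=\sum_i b_i F_i$ makes $K_W$ effective with all $b_i>0$ and support on the $(\sigma\circ g)$-exceptional locus. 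Comparing with $K_W=\pi^{*}K_{\widetilde{Y}}+\sum_k c_k G_k$ ($G_k$ being $\pi$-exceptional) and reading off the coefficient of $E_W$ on both sides forces $E$ to occur in $K_{\widetilde{Y}}$ with positive coefficient $b_j$.

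Next I would split into cases according to whether $E$ is horizontal or vertical over $\widetilde{X}$ via $\tilde{\sigma}$. In the horizontal case, since the composition $\widetilde{Y}\dashrightarrow A^n\dashrightarrow X$ coincides with $\tilde{\sigma}$ up to the identification $\widetilde{X}\approx X$, the image $f(E)$ dominates $X$ under $\phi$ and so has dimension $\geq d=\dim X$, contradicting the codimension-$\geq 2$ hypothesis inside $A^n=A^{d+1}$. So $E$ must be vertical, mapping to a prime divisor $D'\subset\widetilde{X}$, and $E$ is a component of $\tilde{\sigma}^{-1}(D')$. Tracing through the birational equivalence $\widetilde{Y}\approx Y$, the divisor $E$ then corresponds to a component $\mathcal{E}\subset{\sigma'}^{-1}(D')\subset Y$ whose image $\sigma(\mathcal{E})\subset A^n$ has codimension at least two; in particular $\mathcal{E}$ is $\sigma$-exceptional and lies inside one of the $E_i$.

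To finish, I would invoke the Hilbert-family description from Lemma~\ref{theorem:almost-flat}: each fiber of $\tilde{\sigma}$ is obtained as the image of the corresponding fiber of $\sigma'$ under the canonical map for $\omega_{Y/X}$, and a component $C$ of a fiber of $\sigma'$ is collapsed precisely when $K_{Y/X}\cdot C=\sum_i a_i(E_i\cdot C)=0$. The main obstacle is to show that every $\sigma$-exceptional vertical component of a fiber of $\sigma'$ satisfies this vanishing, so $\mathcal{E}$ above is contracted by $Y\dashrightarrow\widetilde{Y}$, contradicting the fact that it cuts out the honest divisor $E\subset\widetilde{Y}$. The hard part is the local intersection computation on $Y$ using positivity of the $a_i$ and the fact that $\sigma$ collapses each $E_i$ to a subvariety of codimension $\geq 2$ in $A^n$; if the direct approach proves unwieldy, I would appeal to \cite[Theorem~11]{kol-lars}, together with the irreducibility of the general fiber (Lemma~\ref{theorem:fiber-irred}), to rule out spurious vertical components in the normal flat family $\widetilde{Y}\to\widetilde{X}$.
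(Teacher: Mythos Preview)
Your proposal diverges substantially from the paper's argument and contains a genuine gap. In the horizontal case you claim that $f(E)$ dominates $X$ under $\phi$ because the composition $\widetilde{Y}\dashrightarrow A^n\dashrightarrow X$ agrees with $\tilde{\sigma}$; but rational maps do not compose pointwise on indeterminacy loci. The locus $f(E)\subset A^n$, having codimension $\ge 2$, may well sit inside the indeterminacy locus of $\phi$ (indeed it lies in $\sigma(\bigcup E_i)$), and then $\phi\big|_{f(E)}$ is simply undefined --- you cannot conclude $\dim f(E)\ge d$. Concretely, if $\sigma$ blows up a general point of $A^{d+1}$, the resulting exceptional $\p^d$ can perfectly well dominate $X$ under $\sigma'$, so such horizontal $E$ are not excluded by your dimension count. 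Your vertical case is, by your own admission, incomplete: the crucial step (that the relevant $\sigma$-exceptional component satisfies $\sum_i a_i(E_i\cdot C)=0$ and is therefore collapsed by $Y\dashrightarrow\widetilde{Y}$) is asserted rather than proved, and the fallback appeal to \cite[Theorem~11]{kol-lars} is not developed into an argument.

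The paper proceeds along an entirely different route. It first reduces (w.l.o.g.) to the situation where there is a birational \emph{morphism} $f:Y\to\widetilde{Y}$ and then compares $K_{\widetilde{Y}}$ with $f_*K_Y$ directly: writing $K_{\widetilde{Y}}\equiv\sum a_iE'_i+\sum b_jF_j$ (with the $F_j$ disjoint from the general fiber) and using $\sigma_*K_Y=0$ together with $K_Y-f^*K_{\widetilde{Y}}\subseteq\text{Exc}(f)$, one forces all $b_j=0$, whence $f_*K_Y=K_{\widetilde{Y}}$ and $\widetilde{Y}$ has rational singularities. The conclusion then comes from the weak factorization theorem combined with the fact that $A^n$ contains no rational curves --- an ingredient absent from your proposal. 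This last step is the real engine of the proof, and it is precisely what your horizontal/vertical case split attempts to replace without success.
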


\begin{proof}
We may assume w.\,l.\,o.\,g. there is a birational morphism $f:
Y\map\widetilde{Y}$. Note that
$$
K_{\widetilde{Y}} \equiv \sum a_iE'_i + \sum b_jF_j
$$
(numerically) on $\widetilde{Y}$ for some $b_j\in\ra$ with $F_j
\cdot (f\circ\sigma)_*^{-1}F = 0$ for all $j$. Here $E'_i$ are the
proper transforms of (some of the) $E_i$ from the above formula
for $K_Y$ ($a_i$ are the same). Further, since $\sigma_*K_Y = 0$
and $K_Y - f^*(K_{\widetilde{Y}})\subseteq\text{Exc}(f)$, we
obtain that $f_*^{-1}F_j = E'_i$ for some $i = i(j)$ and every $j$
whenever $b_j\ne 0$. Hence we may assume that $b_j = 0$ for all
$j$. This implies that $f_*K_Y = K_{\widetilde{Y}}$ and so
$\widetilde{Y}$ has rational singularities. The result now follows
from the weak factorization theorem for birational morphisms (as
$A^n$ does not contain rational curves).
\end{proof}

\refstepcounter{equation}
\subsection{}
\label{subsection:pre-132434}

Consider the morphism $\pi: Y\stackrel{\sigma}{\map} A^n\map
A^n/A^{d-1}$. Put $S := \sigma'^{-1}\sigma'(\Omega)\subset Y$ for
$\Omega := \sigma_*^{-1}(A^{d-1})$. Note that $\Omega$ is a fiber
of $\pi$. It follows from Lemma~\ref{theorem:contract} that the
induced map $\pi\circ f^{-1}: \widetilde{Y}\dashrightarrow
A^n/A^{d-1}$ is defined near $f_*\Omega$ for generic (varying)
$A^{d-1}$. This allows us restrict to the case $\widetilde{Y} = Y$
(the arguments below work literarilly in the general setting).

\begin{lemma}
\label{theorem:base-point-free} The linear system $|S|$ is
basepoint-free on $Y$.
\end{lemma}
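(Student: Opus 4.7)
The plan is to embed the natural $2$-parameter family of translates of $A^{d-1}$ inside $A^n$ as a subfamily of $|S|$, and then deduce basepoint-freeness by a dimension count on the abelian surface $A^n/A^{d-1}$.

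Since $n = d+1$ and $A^n$ is rully, the quotient $B := A^n/A^{d-1}$ is a $2$-dimensional abelian variety, and the translates $\{A^{d-1}_t\}_{t \in B}$ form a $2$-parameter algebraic family on $A^n$. Set $\Omega_t := \sigma_*^{-1}(A^{d-1}_t)$ and $S_t := \sigma'^{-1}(\sigma'(\Omega_t))$; for generic $t$, a dimension count (using that the general fiber of $\phi$ is a curve and $A^{d-1}_t$ moves in a positive-dimensional family) shows that $D_t := \sigma'(\Omega_t)$ is a genuine divisor on $X$, and by the flatness of $\sigma'$ secured in Lemma~\ref{theorem:almost-flat} one has $S_t = \sigma'^*(D_t)$ as Cartier divisors.

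The first main step is to verify that $\{S_t\}_{t \in B} \subseteq |S|$, i.e. that all $S_t$ are linearly equivalent on $Y$. The divisors $\{D_t\}$ form an algebraic family, hence are algebraically equivalent on $X$. But $X$ is stably rational by the hypothesis of Theorem~\ref{theorem:main}, so $H^1(X, \mathcal{O}_X) = 0$ and therefore $\text{Pic}^0(X) = 0$; algebraic equivalence then coincides with linear equivalence on $X$, giving $D_t \sim D_{t'}$, and pulling back via $\sigma'$ gives $S_t \sim S_{t'}$ on $Y$. To conclude basepoint-freeness, fix any $p \in Y$ and set $q := \sigma'(p)$. Flatness of $\sigma'$ makes $F_q := \sigma'^{-1}(q)$ a purely $1$-dimensional curve, and $p \in S_t$ forces $\Omega_t \cap F_q \ne \emptyset$, hence $\sigma(F_q) \cap A^{d-1}_t \ne \emptyset$. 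Since $\sigma(F_q) \subset A^n$ has dimension at most $1$, its image under the projection $A^n \to B$ is a proper closed subset of the $2$-dimensional surface $B$; thus for a dense open set of $t \in B$ we have $\sigma(F_q) \cap A^{d-1}_t = \emptyset$, and $p \notin S_t$. As this holds for every $p$, the base locus of $|S|$ is empty.

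The main obstacle I anticipate is the linear-equivalence step: without stable rationality of $X$, the $D_t$ would be only algebraically, not linearly, equivalent, and the $\{S_t\}$ would not fit into a single linear system. This is precisely where the hypothesis $X \times \p^k \approx \p^{n+1}$ of Theorem~\ref{theorem:main} is invoked, consistent with the role of the lemma within the larger proof. A secondary care point is ensuring that $\sigma'(\Omega_t)$ is genuinely of codimension one for generic $t$ (rather than being collapsed into fibers of $\sigma'$); this follows from the generic-finiteness of $\phi|_{A^{d-1}_t}$ together with flatness, but the verification must be carried out carefully.
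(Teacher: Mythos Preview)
Your proof is correct and follows essentially the same route as the paper's: both argue that the pushforwards $\sigma'_*(\Omega_t)$ form an algebraic family on $X$, use the vanishing of $\text{Pic}^0(X)$ (you via stable rationality and $H^1(X,\mathcal{O}_X)=0$, the paper via rational connectedness and $\text{Alb}(X)=0$) to upgrade to linear equivalence, and then pull back via the flat $\sigma'$. Your explicit dimension count---image of the $1$-dimensional fiber $F_q$ in the $2$-dimensional base $B$ is a proper closed subset---is a welcome elaboration of the paper's terse ``since $A^{d-1}$ was chosen arbitrarily, the assertion follows.''
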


\begin{proof}
By construction, the cycles $\Omega$ (with varying $A^{d-1}$) are
all algebraically equivalent on $Y$, since $\Omega$ is a fiber of
$\pi$. Then the cycles $\sigma'_*(\Omega)$ on $X$ are also
algebraically equivalent. Moreover, since $X$ is smooth and
rationally connected, thus having Alb$(X) = 0$, all
$\sigma'_*(\Omega)$ are in fact linearly equivalent divisors. In
particular, since $\sigma'$ is flat, the divisors
$\sigma'^{-1}\sigma'(\Omega)\subset Y$ are linearly equivalent as
well. Finally, since all $\sigma'^{-1}\sigma'(\Omega)$ belong to
$|S|$ and $A^{d-1}$ was chosen arbitrarily, the assertion follows.
\end{proof}

Choose $\Sigma\in |S|$ generic. Note that $\Sigma$ is smooth.
Also, blowing up $X$ and making a base change for $\sigma'$, we
may assume that $\dim |S| = 1$

\begin{prop}
\label{theorem:kapa-0} The hypersurface $\Sigma$ contains a fiber
of the morphism $\pi\big\vert_{\Sigma}: \Sigma\map A^n/A^{d-1} =:
A^2$ birational to $\Omega$.
\end{prop}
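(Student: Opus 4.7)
The plan is to exploit the morphism $\psi\colon Y\to|S|\simeq\p^1$ induced by the base-point-free pencil of Lemma~\ref{theorem:base-point-free}: the fibres of $\psi$ are exactly the divisors in $|S|$, so it suffices to find some $z\in A^2$ whose $\pi$-fibre $\Omega_z=\pi^{-1}(z)$ sits inside the generic fibre of $\psi$. Since $\Omega_z$ is the proper transform in $Y$ of a translate $z+A^{d-1}\subset A^n$, and for generic $z$ this translate avoids the centres of $\sigma$ in sufficiently high codimension, $\Omega_z$ is smooth and birational to the Abelian variety $A^{d-1}$. In particular, every morphism $\Omega_z\to\p^1$ is constant, so $\psi|_{\Omega_z}$ takes a single value $\tau(z)\in\p^1$, giving a rational map $\tau\colon A^2\dashrightarrow\p^1$ defined on a dense open $U\subseteq A^2$.

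The crux of the argument is to show that $\tau$ is not constant. Suppose for contradiction $\tau\equiv t_0$; then $\psi^{-1}(t_0)$ contains $\Omega_z$ for every $z\in U$, whence it contains $\pi^{-1}(U)$. Since $\pi$ is surjective and $U$ is dense in $A^2$, the set $\pi^{-1}(U)$ is dense in $Y$; as $\psi^{-1}(t_0)$ is closed, this forces $\psi^{-1}(t_0)=Y$, contradicting that it is a proper divisor belonging to $|S|$. Consequently $\tau$ is a non-constant rational map to $\p^1$ and hence dominant, so for a generic $\Sigma=\psi^{-1}(t)\in|S|$ there is $z\in U$ with $\tau(z)=t$, i.e.\ $\Omega_z\subseteq\Sigma$. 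This $\Omega_z$ is then a $(d-1)$-dimensional fiber of $\pi\big\vert_{\Sigma}$ birational to $A^{d-1}\approx\Omega$, as required.

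The step I expect to be the main obstacle is the very existence of the rational map $\tau$ as a map that is \emph{not everywhere regular}: any morphism $A^2\to\p^1$ must be constant by the rigidity of Abelian varieties, which would recreate the contradiction above. One therefore needs to argue that $\tau$ has unavoidable indeterminacy along a codimension-two locus of $A^2$ -- equivalently, that the $2$-parameter family $\{\Omega_z\}$ carries ``base points" in the pencil $|S|$ at special values of $z$ -- while on a dense open subset it is a dominant morphism to $\p^1$. This is exactly the place where the blow-up of $X$ and the base change for $\sigma'$ from the discussion preceding the proposition (ensuring $\dim|S|=1$) enter in a non-trivial way: they must be arranged so that the induced family $\{S_z\}_{z\in A^2}\subset|S|$ sweeps out the whole pencil while each $\Omega_z$ still maps to a single point of $|S|$, and the rational indeterminacy of $\tau$ is precisely what reconciles these two demands.
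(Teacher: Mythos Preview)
Your argument rests on the assertion that every morphism from an abelian variety to $\p^1$ is constant, invoked once to define $\tau$ (``every morphism $\Omega_z\to\p^1$ is constant'') and again in your third paragraph (``any morphism $A^2\to\p^1$ must be constant by the rigidity of Abelian varieties''). This is false. Already an elliptic curve admits the double cover $E\to\p^1$; more generally, any abelian variety with an elliptic quotient $A\twoheadrightarrow E$ has the non-constant composite $A\to E\to\p^1$. In the present setting both $A^{d-1}$ and $A^2=A^n/A^{d-1}$ are non-simple --- they inherit elliptic quotients from the rully chain $A^1\subsetneq\cdots\subsetneq A^n$ --- so the claim fails exactly where you need it. You may be thinking of the fact that every morphism $\p^1\to A$ is constant, or that morphisms between abelian varieties are translates of homomorphisms; neither controls maps \emph{from} $A$ to $\p^1$.

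With that step removed, the construction of $\tau$ collapses: there is no a priori reason for $\psi\big\vert_{\Omega_z}$ to be constant. One can attempt a rescue by noting that $\Omega_z\subset S_z:=\sigma'^{-1}\sigma'(\Omega_z)\in|S|$, which does yield a well-defined assignment $z\mapsto[S_z]\in|S|$; but before the reduction the target $|S|$ may have large dimension, and after one blows up $X$ and base-changes to force $\dim|S|=1$, a general member $\Sigma$ of the resulting pencil need not be of the form $S_z$ at all. This is precisely why the paper proceeds by contradiction: assuming every fibre of $\pi\big\vert_{\sigma(\Sigma)}$ has dimension $\le d-2$, it cuts by $A^3\subset A^n$ to make $\sigma(\Sigma)\to A^2$ a finite surface cover, and then runs a case analysis on whether $K_\Sigma$ is $\sigma$-nef and whether the Hurwitz ramification divisor on $A^2$ is big (Lemmas~\ref{theorem:kapa-0-a}, \ref{theorem:kapa-0-b-14453}, \ref{theorem:kapa-0-b-1432}, \ref{theorem:kapa-0-b}, \ref{theorem:kapa-0-c}). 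Several branches produce one of the alternatives of Theorem~\ref{theorem:main} directly; the remaining branch ends in an honest contradiction. There does not appear to be a shortcut of the kind you propose.
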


\begin{proof}
Suppose not. Then all fibers of $\pi\big\vert_{\sigma(\Sigma)}$
have dimension $\le d-2$. Cutting with $A^3\subset A^n$ we reduce
to the case $\Sigma$ is a smooth surface such that
$\pi\big\vert_{\sigma(\Sigma)}: \sigma(\Sigma)\map A^2$ is finite.

\begin{lemma}
\label{theorem:kapa-0-a} Let $K_{\Sigma}$ be not $\sigma$-nef.
Then $X$ is (birationally) fibred by hypersurfaces of negative
Kodaira dimension.
\end{lemma}

\begin{proof}
Notice that $\sigma\big\vert_{\Sigma}$ is a resolution of
indeterminacies of the map $\phi\big\vert_{\sigma(\Sigma)}$. Also,
since $\Sigma\in |S|$ is generic, the intersections $\Sigma\cap
E_i$ are irreducible for all $i$. Let $C\subset\Sigma$ be a
$(-1)$-curve contracted by $\sigma$. Then we have $C\subseteq E_j$
for some $j$ and $-1=(C^2) = mE_j\cdot C$ for the (infinitely
close) multiplicity $m=1$ of $\sigma(\Sigma)$ (w.\,r.\,t. $E_j$).
In particular, for $\sigma(C)$ is a base point of
$\phi\big\vert_{\sigma(\Sigma)}$, the curve $C$ is a multisection
of the fibration $\sigma'\big\vert_{\Sigma}$ and so $\phi(C)$ is a
rational curve on $X$.

Thus, if $d = 2$, then there is a rational fibration on $X$ by
rational curves $\phi(\Sigma)$. (Note that at this point we do not
need any assumption on the map $\pi\big\vert_{\sigma(\Sigma)}$.)
Further, varying $A^3\subset A^n$ in our initial setting (with
$\dim\Sigma = d$) and assuming $K_{\Sigma\cap A^3}$ to be not
$\sigma$-nef for (most of) these $A^3$, we find a bunch of
rational curves $\phi(\Sigma\cap A^3)$ which cover the
hypersurface $\phi(\Sigma)$. This yields a fibration on $X$ as
claimed.
\end{proof}

Let us now consider the case of $K_{\Sigma}$ being $\sigma$-nef.
Here we use the assumption on $\pi\big\vert_{\sigma(\Sigma)}$ from
the beginning. We get
\begin{equation}
\nonumber K_{\Sigma} = \sigma^*K_{\sigma(\Sigma)} - \sum
b_jE_j\big\vert_{\Sigma}.\footnote{We assume for simplicity that
$\sigma(\Sigma)$ is normal and $\ra$-Gorenstein.}
\end{equation}
Here $E_j\big\vert_{\Sigma}$ are
$\sigma\big\vert_{\Sigma}$-exceptional so that $b_j\ge 0$ for all
$j$. On the other hand, we have $K_{\Sigma} = \sum
a_iE_i\big\vert_{\Sigma} = K_Y\big\vert_{\Sigma}$, which gives
\begin{equation}
\label{k-sigma-eq} \sum (a_i+b_i)E_i\big\vert_{\Sigma} =
\sigma^*K_{\sigma(\Sigma)}
\end{equation}
(with $b_i=0$ for non-$\sigma\big\vert_{\Sigma}$-exceptional
$E_i\big\vert_{\Sigma}$). Furthermore, the Hurwitz formula gives
$$K_{\sigma(\Sigma)} = \pi^*(D)$$ for some effective cycle
$D\subset A^2$, the ramification locus of $\pi\big\vert_{\Sigma}$.

\begin{lemma}
\label{theorem:kapa-0-b-14453} If the cycle
$E_j\big\vert_{\Sigma}$ is not
$\sigma\big\vert_{\Sigma}$-exceptional, then
$E_j\big\vert_{\Sigma}=\pi^*[\text{0-cycle on}\ A^2]$, all $j$.
\end{lemma}

\begin{proof}
Notice that $\sigma(S)$ is smooth near $\sigma(\Omega)$. This
gives $(\sigma(\Sigma)\big\vert_{\sigma(S)})\cdot\sigma(\Omega)=0$
and $\sigma(\Omega)^2 = 0$. Indeed, we have $K_{\sigma(\Sigma)} =
\pi^*(D)$, $\sigma(\Sigma)\big\vert_{\sigma(S)}=K_{\sigma(S)}$
near $\sigma(\Omega)$, and so $K_{\sigma(S)}\equiv S \cdot
\pi^*[\text{cycle on}\ A^2]$ (as $S\sim\Sigma$). In particular,
$K_{\sigma(S)}\cdot\sigma(\Omega)=0$ (for $\Omega$ is a fiber of
$\pi$), thus
$(\sigma(\Sigma)\big\vert_{\sigma(S)})\cdot\sigma(\Omega)=0$ and
$\sigma(\Omega)^2 = 0$.

Now $\sigma(E_j)$ equals (set-theoretically)
$\pi^{-1}[\text{0-cycle on}\ A^2]$ and the result follows.
\end{proof}

It follows from Lemma~\ref{theorem:kapa-0-b-14453} that the left
hand side of \eqref{k-sigma-eq} is negative on all the
$E_j\big\vert_{\Sigma}$ that are
$\sigma\big\vert_{\Sigma}$-exceptional. This shows that $b_j = 0$
for all $j$.

\begin{lemma}
\label{theorem:kapa-0-b-1432} The rational map
$\phi\big\vert_{\sigma(\Sigma)}$ is everywhere defined.
\end{lemma}

\begin{proof}
Indeed, given $b_j = 0$ for all $j$ there are either no
$\sigma\big\vert_{\Sigma}$-exceptional curves
$E_j\big\vert_{\Sigma}$, or all of them are $(-2)$-curves on
$\Sigma$. We are done in the former case. In the latter case,
arguing as in the proof of Lemma~\ref{theorem:kapa-0-a} we find
that $E_j\big\vert_{\Sigma}$ are multisections of
$\sigma'\big\vert_{\Sigma}$, thus $X$ is (birationally) fibred by
hypersurfaces of negative Kodaira dimension.
\end{proof}

According to Lemma~\ref{theorem:kapa-0-b-1432} we may identify
$\sigma(\Sigma)$ with its normalization $\Sigma$ (and
$\phi\big\vert_{\sigma(\Sigma)}$ with
$\sigma'\big\vert_{\Sigma}$). Suppose that the ramification cycle
$D$ is big (i.\,e. $D^2>0$).

\begin{lemma}
\label{theorem:kapa-0-b} $\phi(\Sigma) \simeq A^1$.
\end{lemma}

\begin{proof}
Recall that $\phi\big\vert_{\Sigma}: \Sigma\map X$ is a flat
morphism with connected $1$-dimensional fibers. Let $|F|$ be the
corresponding linear system on $\Sigma$. Note that $F = \pi^*(E)$
for an elliptic curve $E\subset A^2$ (as $(F^2)=0$ and $D\cap
E\ne\emptyset$). This gives a morphism $\Sigma\map A^2\map
A^2\slash E =: A^1$ with connected fibers $\sim F$. But this means
that $\phi(\Sigma) \simeq A^1$.
\end{proof}

Further, cutting with (varying) $n-3$ hypersurfaces from
$\phi^*\mathcal{O}_{X}(1)$ in our initial setting (with
$\dim\Sigma = d$) and applying the arguments in the proof of
Lemma~\ref{theorem:kapa-0-b} to the morphism $\Sigma\map A^d\map
A^d\slash E =: A^{d-1}$ (for again $K_{\Sigma} = \pi^*(D)$, $D$ is
big, etc.), we obtain that $\phi(\Sigma)\approx A^{d-1}$. Thus $X$
is b.-\,H. in this case.

Now let $D^2=0$. Then $(K_{\Sigma}^2)=0$ and the linear system
$|K_{\Sigma}| = |\pi^*(D)|$ is basepoint-free.

\begin{lemma}
\label{theorem:kapa-0-c} The cycle $S\cdot\Sigma\subset A^n$ is
supported on the fibers of $\phi\big\vert_{\Sigma}$.
\end{lemma}

\begin{proof}
We have $S\cdot\Sigma\in |K_{\Sigma}|$ and also
$S\cdot\Sigma\supseteq F$ (see the proof of
Lemma~\ref{theorem:kapa-0-b}). But $S\cdot\Sigma$ is a disjoint
union of elliptic curves $C_i$ by the assumption on $K_{\Sigma}$.
The assertion now follows because $(F^2) = C_i\cdot F = 0$ for all
$i$.
\end{proof}

Again, cutting with (varying) $n-3$ hypersurfaces from
$\phi^*\mathcal{O}_{X}(1)$ in our initial setting (with
$\dim\Sigma = d$) and applying Lemma~\ref{theorem:kapa-0-c}, we
get $\sigma(\Sigma)^n = 0$. The latter means that
$\sigma'\big\vert_{\Sigma}$ factors through the morphism $A^n\map
A^n\slash A^1$. This also holds for $\sigma'\big\vert_S$ (as
$S\sim\Sigma$). Furthermore, since $\Omega\subset S$, the morphism
$\sigma'\big\vert_S$ admits a (rational) section. This gives
$S\approx\sigma(S)\simeq A^1\times A^{d-1}$. Then $S$ maps onto a
curve under the morphism $\pi$. Hence the same holds for
$\Sigma\in |S|$.

Proposition~\ref{theorem:kapa-0} is proved.
\end{proof}

We may assume that $\Omega\subset \Sigma$ due to
Lemma~\ref{theorem:kapa-0}. Now, if $\kappa(\sigma'(\Omega)) =
-\infty$, then $X$ is (birationally) fibred by hypersurfaces of
negative Kodaira dimension.

\begin{prop}
\label{theorem:final-lem} Let $\kappa(\sigma'(\Omega)) \ge 0$.
Then $\sigma'(\Omega)\approx$ an Abelian variety.
\end{prop}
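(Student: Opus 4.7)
The plan is to exploit that $\Omega\approx A^{d-1}$ is birational to an Abelian variety, while $V:=\sigma'(\Omega)\subset X$ is its image under the restriction of the morphism $\sigma'$; the aim is to combine the structure theory of maps from Abelian varieties with the hypothesis $\kappa(V)\ge 0$. First I would check that $V$ is $(d-1)$-dimensional for generic $\Omega$, so that $\sigma'|_{\Omega}\colon\Omega\to V$ is generically finite and surjective (the case $\dim V<d-1$ can be handled by arguments parallel to those in Proposition~\ref{theorem:kapa-0}, producing either a negative Kodaira dimension fibration or a b.-\,H.\ structure on $X$, both already sufficient for Theorem~\ref{theorem:main}). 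Passing to suitable birational models and using that $\sigma|_{\Omega}\colon\Omega\to A^{d-1}$ is a birational contraction, we may replace $\sigma'|_{\Omega}$ by a generically finite surjective morphism $\psi\colon A^{d-1}\to V$.

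Second, I would Stein-factorize $\psi$ as $A^{d-1}\stackrel{\alpha}{\to}W\stackrel{\beta}{\to}V$ with $\alpha$ of connected fibres and $\beta$ finite. By Ueno's classical theorem on the structure of morphisms from Abelian varieties, the connected fibres of $\alpha$ are translates of an Abelian subvariety $B\subset A^{d-1}$, so $W=A^{d-1}/B$ is itself an Abelian variety.

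Third, I would apply the Riemann--Hurwitz-type ramification formula to the finite cover $\beta$: one has $K_{W}\equiv\beta^{*}K_V+R$ with the ramification divisor $R\ge 0$, and since $K_W=0$ this reads $\beta^{*}K_V\equiv-R$. The hypothesis $\kappa(V)\ge 0$ supplies a nonzero section $s\in H^{0}(V,mK_V)$ for some $m\ge 1$; pulling it back yields $\beta^{*}s\in H^{0}(W,-mR)$, and since $-mR$ is effective only when $R=0$, one concludes that $\beta$ is \'etale.

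The main obstacle lies in the final step: promoting \emph{``$V$ is finite \'etale covered by the Abelian variety $W$''} to \emph{``$V\approx$ an Abelian variety.''} \emph{A priori} such \'etale quotients can be of bielliptic type, with the covering group acting by non-translation automorphisms. To rule this out, I would use that $\Omega$ moves in a positive-dimensional family, obtained from varying translates of $A^{d-1}$ inside $A^{n}$ together with the rully flag structure; this produces a corresponding family of \'etale covers $W_{t}\to V_{t}\subset X$. Rigidity of deck transformations in such a family, combined with the abundant symmetry provided by the rully hypothesis (and the fact that the $V_{t}$ sit inside a stably rational $X$), should force the Galois group to act on $W$ by translations only, so that $V\approx W/G_{\mathrm{trans}}$ is itself an Abelian variety, completing the proof.
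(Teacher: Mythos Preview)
Your route to showing that $\sigma'(\Omega)$ is finitely \'etale-covered by an Abelian variety is reasonable and close in spirit to the paper's (which cites \cite[Theorem~11]{kol-lars} directly rather than running your Stein-factorization plus Riemann--Hurwitz argument, but to the same effect). The final step, however---upgrading ``\'etale quotient of an Abelian variety'' to ``birational to an Abelian variety''---is a genuine gap as written. Free finite quotients of Abelian varieties include, for instance, bielliptic surfaces, which have $\kappa=0$ yet $q=1<2$ and are certainly not birational to Abelian surfaces; your ``rigidity of deck transformations in a family'' is not a theorem, and nothing you have said forces the Galois group to act by translations. Appealing to the rully flag or to translates of $A^{d-1}$ does not obviously help either, since the relevant invariants of the $V_t$ are constant in the family and each individual $V_t$ could equally well be of bielliptic type.

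The paper resolves this by a different mechanism: instead of analysing the deck group, it computes the irregularity $q(\sigma'(\Omega))$ directly and shows it equals $d-1=\dim\sigma'(\Omega)$. The tool is the ambient $d$-fold $\Sigma\supset\Omega$ supplied by Proposition~\ref{theorem:kapa-0}, which carries \emph{two} curve-fibrations: $\sigma'\big\vert_{\Sigma}$ over $\sigma'(\Omega)$, with fibre genus $g$, and the map $f\colon\Sigma\to A^n/A^2=A^{d-1}$ induced by $\pi$, with fibre genus $g'$ and admitting $\Omega$ as a section. Leray gives
\[
q(\Sigma)=q(\sigma'(\Omega))+g,\qquad q(\Sigma)=q(\Omega)+g'=(d-1)+g',
\]
and a base-change trick (Lemma~\ref{theorem:final-lem-op-est}) yields $g\ge g'$, hence $g=g'$ and $q(\sigma'(\Omega))=d-1$. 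The conclusion then follows from \cite[Theorem~11.1]{kollar-shaf}, a Kawamata-type characterization: a smooth projective variety with $\kappa=0$ and irregularity equal to its dimension is birational to an Abelian variety. This is exactly the input that excludes the bielliptic-type quotients your proposal cannot rule out; you should replace the vague rigidity paragraph by this irregularity computation.
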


\begin{proof}
Notice first that $\kappa(\sigma'(\Omega)) = 0$ because
$\kappa(\Omega)\ge\kappa(\sigma'(\Omega))$ for generically finite
dominant $\sigma'\big\vert_{\Omega}: \Omega \map \sigma'(\Omega)$.
Also, since $\sigma'$ is flat with $1$-dimensional fibers, the
morphism $\sigma'\big\vert_{\Omega}$ is finite. Then, for
$\sigma'(\Omega)$ is smooth (cf.
Lemma~\ref{theorem:base-point-free}) and not uniruled, the result
of \cite[Theorem 11]{kol-lars} implies that
$\sigma'\big\vert_{\Omega}$ is \'etale.\footnote{Note that
$\sigma'\big\vert_{\Omega}$ is not ramified along the
$\sigma\big\vert_{\Omega}$-exceptional locus by generality of
$\Sigma\supset\Omega$.} This yields the estimate
$$d-1 = q(\Omega)\le q(\sigma'(\Omega))$$ for irregularities.

On the other hand, the Leray spectral sequence (applied to
$\sigma'\big\vert_{\Sigma}$) shows that
\begin{equation}
\label{eq-1} q(\Sigma) = q(\sigma'(\Omega)) + g,
\end{equation}
where $g$ is the genus of a fiber of $\sigma'\big\vert_{\Sigma}$.
Furthermore, restricting the morphism $\pi:
Y\stackrel{\sigma}{\map} A^n\map A^n/A^{2} =: A^{d-1}$ to
$\Sigma$, we obtain a morphism $f: \Sigma\map A^{d-1}$. Note that
the fibers of $f$ are connected because $\Omega\subset\Sigma$ is
(generically) its section. In particular, we get
\begin{equation}
\label{eq-2} q(\Sigma) = q(\Omega) + g',
\end{equation}
where $g'$ is the genus of a fiber of $f$, and so $g\le g'$.

\begin{lemma}
\label{theorem:final-lem-op-est} The estimate $g\ge g'$ holds.
\end{lemma}

\begin{proof}
We may replace $\sigma'(\Omega)$ by its \'etale cover $\Omega$,
make a base change for $\sigma'\big\vert_{\Sigma}$, reduce to the
case of $\sigma'\big\vert_{\Sigma}$ being a fibration over
$\Omega$ (with connected fibers and a section), and thus arrive at
$q(\Sigma) = q(\Omega) + g$ (in place of \eqref{eq-1}). Then
\eqref{eq-2} turns into $q(\Sigma) = q(\Omega) + (g'-1)m + 1$ for
some $m\ge 1$. This gives $g\ge g'$.
\end{proof}

Lemma~\ref{theorem:final-lem-op-est} implies that
$q(\sigma'(\Omega)) = d-1$ as well. The assertion now follows from
\cite[Theorem 11.1]{kollar-shaf}.
\end{proof}

Proposition~\ref{theorem:final-lem} finishes the proof of
Theorem~\ref{theorem:main}.

\begin{remark}
\label{remark:illustr} It is instructive to illustrate the
preceding arguments on a model example, that is of $X := \p^2$,
$A^n := A^3$. One proceeds along the following steps:

\begin{itemize}

    \item start with a rational map $\phi: A^3 \dashrightarrow X$ with
    irreducible fibers, resolve $\phi$ via $\sigma: Y\map A^3$ as in
    {\ref{subsection:pre-1}}, arrive at a regular fibration
    $\sigma': Y \map X$ and reduce to the case when $\sigma'$ is
    flat (see the beginning of {\ref{subsection:pre-132434}});

    \smallskip

    \item each surface $S = \sigma'^{-1}\sigma'(\Omega)$ carries
    an elliptic curve $\Omega\subset Y$ which is a multisections of the
    fibration $\sigma'\big\vert_S$;

    \smallskip

    \item as $\sigma'$ is flat, the linear system $|S|$ is the
    pullback of a linear system on $X$, and we may then assume
    $|S|$ to be a basepoint-free pencil (as before Proposition~\ref{theorem:kapa-0});

    \smallskip

    \item for general $\Sigma\in |S|$, we observe that if the
    divisor
    $K_{\Sigma}$ is not $\sigma\big\vert_{\Sigma}$-nef, then
    $\phi(\Sigma)$ is a rational curve, which gives a pencil of
    rational curves on $X$ (see Lemma~\ref{theorem:kapa-0-a});

    \smallskip

    \item in turn, if $K_{\Sigma}$ \emph{is}
    $\sigma\big\vert_{\Sigma}$-nef and the factorization map $A^3\map A^3\slash
    A^1 = A^2$ (where $A^1 = \sigma(\Omega)$) induces a \emph{finite} morphism $\sigma(\Sigma)\map
    A^2$, then the map
    $\phi\big\vert_{\sigma(\Sigma)}$ is everywhere defined, and we
    may assume that $\sigma(\Sigma)=\Sigma$ (see
    Lemmas~\ref{theorem:kapa-0-b-14453}, \ref{theorem:kapa-0-b-1432});

    \smallskip

    \item furthermore, if the above map $\sigma(\Sigma)\map
    A^2$ has \emph{big} ramification locus, then we obtain (Lemma~\ref{theorem:kapa-0-b}) that
    $\phi(\Sigma)$ is an elliptic curve, and so $X$ is b.-\,H. in this case;

    \smallskip

    \item in the remaining cases, the surfaces $\Sigma\in |S|$ carry
    elliptic curves $\equiv \Omega$ on $Y$, and these give either
    a
    rational or an elliptic fibration on $X=\p^2$ (cf.
    Proposition~\ref{theorem:final-lem}).

\end{itemize}

\end{remark}

\bigskip

\section{Evidence for Conjecture~\ref{theorem:p-n-ab-pen}: an algebraic approach}
\label{section:misc}

\refstepcounter{equation}
\subsection{Elliptic curves}
\label{subsection:misc-1}

The most direct way of relating $A^d$ to $\p^d$ is via the
\emph{uniformization} of $A^d$. Namely, one introduces a number of
(transcendental) parameters, having their source in the affine
space $\fie^d$ and the target in $A^d$. For instance, classically
one has $A^d(\com) = \com^d/\Lambda$, where $\Lambda\subset\com^d$
is a full sublattice. We are up to a similar picture over (almost)
arbitrary ground field $\fie$. In what follows, we will be mostly
concerned with the case $d = 1$, that is of $A^1 =: E$ being an
elliptic curve (this should suffice to develop the necessary
intuition for any $A^d$).

Recall that $E$ is given by an equation (in the Weiestrass normal
form)
$$
y^2 = 4x^3 - g_2x - g_3
$$
on the $(x,y)$-plane, with some $g_2,g_3\in\fie$, where we suppose
$p:=\text{char}~\fie\ne 2,3$. Adding a new variable $z$, we
identify $E$ with a cubic curve in $\p^2$ (the point $[0:1:0]\in
E$ being $0$ w.\,r.\,t. the group law on $E(\bar{\fie})$). Let
$\widehat{E}$ be the completion of $E$ at $0$ (cf.
Example~\ref{example:ab-var-form} below). This is simply the
formal scheme $\text{Spec}~\fie[[t]]$ for a $\fie$-transcendental
variable $t$. Furthermore, the group law $E\times E\map E$ leads
to a Hopf algebra structure on $\fie[[t]]$, thus making
$\widehat{E}$ into a \emph{formal} (aka \emph{local, quasi, etc.)
group} (see e.\,g. \cite{lubin-tate}, \cite{manin-fg-big-paper},
\cite{serre-elliptic-curves},
\cite{oort-formal-groups-newton-polygons}, as well as
{\ref{subsection:misc-2}} below, for an account of the basic
theory).

More specifically, letting
$$
\tilde{x} := -\frac{2x}{y},\ \tilde{y} := -\frac{2}{y}
$$
in the above discussion we get $E$ defined by the equation
$$
\tilde{y} = \tilde{x}^3 + \frac{g_2}{4}~\tilde{x}\tilde{y}^2 +
\frac{g_3}{4}~\tilde{y}^3
$$
on the $(\tilde{x},\tilde{y})$-plane, and so
$$
\tilde{y} = \tilde{x}^3(1 + A_1\tilde{x} + A_2\tilde{x}^2 +
\ldots),
$$
where $A_i$ are homogeneous polynomials in $\cel[g_2/4,g_3/4]$. On
the other hand, for any two points $P_1,P_2\in E,~P_i :=
(x_i,y_i),~x_i,y_i\in\fie$, one gets the $x$-coordinate of the
point $P_3 := P_1+P_2$ in the form
$$
-x_1 - x_2 + \frac{1}{4}\big(\frac{y_1-y_2}{x_1-x_2}\big)^2.
$$
Now, substituting the preceding (formal) series expression for
$\tilde{y}_i$, we find that the formal group law on $\widehat{E} =
\fie[[t]]$ is given by an element $G(x_1,x_2)\in \fie[[x_1,x_2]]$,
which carries the following properties:

\renewcommand{\theenumi}{$G_{\arabic{enumi}}$}

\begin{enumerate}

\item\label{eq-g-1} $G(x_1, G(x_2,x_3)) = G(G(x_1,x_2),x_3)$
    (associativity),

    \smallskip

\item $G(x_1,x_2) = G(x_2,x_1)$ (commutativity),

\smallskip

\item $G(x,i_G(x)) = 0$ for a unique $i_G(x)\in\fie[[x]]$ (existence
of an inverse),

\smallskip

\item\label{eq-g-4} $G(x,0) = G(0,x) = x$ (i.\,e. $x=0$ is the identity w.\,r.\,t. $G$).

\end{enumerate}
The series $G(x_1,x_2)$ is called \emph{formal (abelian, over
$\fie$) group} (see {\ref{subsection:misc-2}} for further
recollections).

\begin{remark}
\label{remark:heur-strat} It is $\widehat{E}$ (and $G(*,*)$) that
we are going to use as a source of ``uniformizers" for $E$. But
before gluing $E$ out of $\widehat{E}~$'s (= constructing entire
(multi-valued) functions on $E(\bar{\fie})$), one has to equip $E$
with extra symmetries, in line with the complex-analytic case of
$E(\com)$ above. For instance, heuristically at least, one has to
have a counterpart of the complex conjugation on the set
$E(\bar{\fie})$ in order to be able to speak about ``entire" (or
``holomorphic") parametrization of $E$. The latter forces the
topology of $\fie$ to be fruitful as well. We will develop all
these matters starting from {\ref{subsection:misc-3}}.
\end{remark}

\refstepcounter{equation}
\subsection{Formal groups}
\label{subsection:misc-2}

Here we recall some standard definitions and facts about (abelian)
formal groups. Let us begin with two examples:

\begin{example}
\label{example:mult-form-gr} Take the polynomial $G(x,y) :=
x+y+xy\in\fie[x,y]$ and add up any two elements $a,b\in \fie$ via
$a+_{\scriptscriptstyle G}b := G(a,b)$. This defines the
\emph{multiplicative formal group} $G_m :=
(\fie,+_{\scriptscriptstyle G})$ (cf.
(\ref{eq-g-1})~--~(\ref{eq-g-4}) above). One can easily check the
series $\phi(x) := \log(1 + x) = x - \displaystyle\frac{x^2}{2} +
\ldots$ gives an isomorphism between $G_m$ and the \emph{additive
group} $G_a := (\fie,+)$ (note that the latter can also be
regarded as a formal group with the group law $G(x,y) := x + y$).
More generally, we set $G_m^{\oplus d} := (k,+_{\scriptscriptstyle
G_1})\times\underbrace{\ldots}_{\text{d
times}}\times~(k,+_{\scriptscriptstyle G_d})$, with $G_i(x,y) =
x+y+xy$ for all $i$, and call this formal group \emph{direct sum
of $d$ copies of $G_m$}.
\end{example}

\begin{example}
\label{example:ab-var-form} Construction of $\widehat{E}$ in
{\ref{subsection:misc-1}} can be extended to the case of an
arbitrary Abelian variety $A^n$. In fact, replacing $A^n$ by its
formal neighborhood at $0$ one gets a formal group
$\widehat{A^n}$. The latter is defined similarly to $G(x,y)$ in
(\ref{eq-g-1})~--~(\ref{eq-g-4}), with $x$ replaced by the
$n$-string $(x_1,\ldots,x_n)$ (the same with $y_i$ for $y$) and
$G$ replaced by the $n$-string
$(G_1,\ldots,G_n),G_i\in\fie[[x_1,\ldots,x_n,y_1,\ldots,y_n]]$.
Thus $\widehat{A^n}$ is a counterpart of the local group of a
($\com$- or $\mathbb{R}$-) Lie group.
\end{example}

Regard the affine space $\mathbb{A}^n$ as its formal completion at
the origin and fix two formal group structures
$G_1(x,y),~G_2(x,y)$ on $\mathbb{A}^n$ (as defined in
Example~\ref{example:ab-var-form}).

\begin{definition}
\label{definition:formal-group-hom} A formal endomorphism $f:
\mathbb{A}^n \longrightarrow \mathbb{A}^n$, $f :=
(f_1,\ldots,f_n)$, $f_i\in\fie[[x]]$, is called
\emph{homomorphism} of formal groups $G_i$ (specifically, from
$G_1$ to $G_2$) if $f(G_1(x, y)) = G_2(f(x),f(y))$. Further, $f$
is an \emph{isomorphism}, and we put $G_1 \stackrel{f}{\simeq}
G_2$ (or simply $G_1\simeq G_2$ (or $G_1\stackrel{\sim}{\to} G_2$)
if no confusion is likely), if there exists an inverse $f^{-1}$
($f\circ f^{-1} = f^{-1}\circ f = \text{id}_{\fie^n}$) which is
also a homomorphism. For $G_1 = G_2 =: G$, we denote by
$\text{End}(G)$ the set of all group endomorphisms $G\to G$, with
$\text{Aut}(G)\subset\text{End}(G)$ being the collection of all
automorphisms.
\end{definition}

If $p = \text{char}~\fie>0$, then according to \cite{dieud} after
a (formal) coordinate change on $G_1$ and $G_2$, respectively, one
can write $f_i = x_i^{p^k}$, $n - s_k < i \leq n - s_{k+1}$, for
some $0\leq s_m \leq\ldots\leq s_1\leq s_0 := n$, $m \in \cel_{\ge
0}$.

\begin{definition}
\label{definition:formal-group-isog} $f$ is called an
\emph{isogeny} if $s_m = 0$.
\end{definition}

\begin{example}
\label{example:frob} In the previous notations, let $G_2 :=
G_1^{~(p^k)}$, $k \geq 1$, be a formal group with the composition
law $G_2(x,y)$ obtained by first replacing $G_1(x,y)$ by
$(G_1(x,y))^{p^k}$ and then substituting $x_i,y_i$ in place of
$x_i^{p^k},y_i^{p^k}$, respectively, for all $1\le i\le n$. Then
$f := \text{Fr}_p^k$, with $f_i := x_i^{p^k}$ for all $i$, is an
isogeny (the $k^{\text{th}}$ iteration of the Frobenius
$\text{Fr}_p$). Note that $G_2$ (as a formal scheme) carries the
same underlying topological space as $G_1$, but the structure
sheaf $\mathcal{O}_{G_2}$ (as a sheaf of ${\bf k}^{p^k}$-algebras)
is identified with $\mathcal{O}_{G_1}^{p^k}$(as a sheaf of ${\bf
k}$-algebras).
\end{example}

In view of Definition~\ref{definition:formal-group-isog} and
Example~\ref{example:frob}, the groups $G_1,G_2$ are called
\emph{isogeneous} (and we put $G_1\sim G_2$) if there exists an
isogeny $f: G_1 \longrightarrow \text{Fr}_p^k(G_2) = G_2^{~(p^k)}$
for some $k$. Notice that $\sim$ is an equivalence relation.

\begin{remark}
\label{remark:formal-group-mult-by-p} Replacing $\fie^n$ by its
dual, one may associate the \emph{dual group} $G^*$ to the formal
group $G$. Consider $\text{Fr}_p$ acting on $G^*$, and its
conjugate $\text{Fr}^*_p$ acting on $G$. The group $G$ is thus
equipped (canonically) with the \emph{multiplication-by-$p$
endomorphism} $p\cdot\text{Id}_{G} :=
\text{Fr}_p\circ\text{Fr}^*_p = \text{Fr}^*_p\circ\text{Fr}_p$
(see \cite[Proposition 1.4]{manin-fg-big-paper}). For instance, if
$G = \widehat{A^n}$ (see Example~\ref{example:ab-var-form}), then
$p\cdot\text{Id}_G$ is the localization of the usual
multiplication-by-$p$ endomorphism of $A^n$. Note that in the
latter case $p\cdot\text{Id}_G$ is obviously an isogeny -- la
propri\'et\'e incontournable in the classification of formal
groups (cf. Theorem~\ref{theorem:manin-wide-a-structure} below).
\end{remark}

\refstepcounter{equation}
\subsection{Ordinary Abelian varieties}
\label{subsection:misc-3}

Let $\fie$ be as above ($p = \text{char}~\fie>0$). Recall that an
Abelian variety $A := A^d$ (over $\fie$) is called
\emph{$p$-ordinary} (or simply \emph{ordinary} if no confusion is
possible) if one of the following equivalent conditions holds:

\begin{itemize}

    \item $A$ contains $p^d$ points of order $p$;

    \item the Hasse\,--\,Witt matrix $\text{Fr}_p^*:
    H^1(A^{(p)},\mathcal{O}_{A^{(p)}})\to H^1(A,\mathcal{O}_A)$ is
    invertible.

\end{itemize}

It is impossible to give here an extensive account of the
beautiful theory of ordinary Abelian varieties (and the interested
reader is addressed to the papers \cite{deligne-ord-ab-vars},
\cite{katz}, \cite{ogus}, \cite{serre-tate-ord-ab} for which we
have tried to be in line with in our current exposition). That is
why we restrict ourselves to simply recalling some basic technical
facts which will be used further (see e.\,g.
{\ref{subsection:misc-5}} and the end of
{\ref{subsection:misc-7}}).

Let us start with the case $d = 1$:

\begin{example}
\label{example:hasse} Consider an elliptic curve $E$ given by
equation $y^2 = x(x-1)(x-\lambda)$ (on the affine piece $(z\ne 0)$
of $\p^2 = \text{Proj}~\f_p\left[x,y,z\right]$) for some
$\lambda\in\f_p\setminus{\{0,1\}}$. If $E$ is not ordinary, then
it is called \emph{supersingular} -- the property characterized by
the vanishing $h_E :=
\displaystyle\sum_{i=0}^{m}\binom{m}{i}^2\lambda^i = 0$, where $m
:= (p-1)/2$ (see \cite[Corollary 4.22]{har} for example). In
particular, once $E$ is the mod $p$ reduction of an elliptic curve
$\widetilde{E}$ defined over a number field $K$, there is a place
$\frak{p}\subset K$ such that $\widetilde{E}$ admits an ordinary
reduction modulo $\frak{p}$. Note also that if $p\ne q$ are primes
and $E,E'$ are elliptic curves over $\ra$ say, so that
$h_{E}\equiv 0$ mod $p$ but $h_{E}\not\equiv 0$ mod $q$, and
similarly for $E'$ with $p,~q$ interchanged (we suppose the mod
$p$ and $q$ reductions of both $E,E'$ are non-singular), then $E$
and $E'$ are \emph{not} isogeneous (cf.
Definition~\ref{definition:formal-group-isog}).
\end{example}

\begin{remark}
\label{remark:ord-red} The setting of Example~\ref{example:hasse}
can be generalized as follows. Let $A$ be as above (not
necessarily ordinary). Suppose $A$ is the mod $\frak{p}$ reduction
of an Abelian variety $\widetilde{A}$ defined over some number
field $K$ (with $\frak{p}\cap\cel = (p)$). Then, applying
\cite[Corollary 2.8]{ogus}, one may choose $\frak{p}$ (after
enlarging $K$ if necessary) to be such that $A$ is ordinary.
\end{remark}

Discussion started in Remark~\ref{remark:formal-group-mult-by-p}
finds its further development in the following fundamental

\begin{theorem}[{see \cite[Proposition 2, Theorem 2]{manin-2}}]
\label{theorem:manin-wide-a-structure} Let $A^d$ be a $p$-ordinary
Abelian variety over $\fie$. Then
$$
\widehat{A^d} \sim G_m^{\oplus d}
$$
(cf. Example~\ref{example:mult-form-gr}). More precisely, there is
an isogeny $\Theta: \widehat{A^d} \longrightarrow G_m^{\oplus d}$,
defined up to Frobenius twist, such that $\Theta\circ
p\cdot\text{Id}_{\widehat{A^d}} = \text{Fr}_{p}\circ \Theta$ (cf.
Example~\ref{example:frob}).
\end{theorem}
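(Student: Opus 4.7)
The plan is to derive the theorem from the Dieudonn\'e--Manin classification of formal groups in positive characteristic by matching Newton polygons. First I would reduce to the case $\fie = \bar{\fie}$, which is harmless since the statement concerns an isogeny; this lets me invoke the full classification. The object to understand is $\widehat{A^d}$, viewed as the connected component of the $p$-divisible group $A^d[p^\infty]$, which has total height $2d$ and dimension $d$.

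Next I would use the ordinary hypothesis to pin down the invariants of $\widehat{A^d}$. By the first equivalent characterization recalled in {\ref{subsection:misc-3}}, the \'etale quotient of $A^d[p^\infty]$ has height exactly $d$, so the connected piece has height $d$ as well, while its dimension remains $d$. Thus $\widehat{A^d}$ is a connected formal group whose height equals its dimension.

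At this point I would invoke Dieudonn\'e--Manin: over $\bar{\fie}$ the isogeny class of a formal group is determined by its Newton polygon, whose slopes for a connected object lie in $(0,1]$, with total slope-length equal to the dimension and total horizontal length equal to the height. The constraint $\dim = \text{height} = d$ forces every slope to equal $1$. The only simple formal group of slope $1$ is $G_m$, hence $\widehat{A^d} \sim G_m^{\oplus d}$, yielding an isogeny $\Theta: \widehat{A^d} \map G_m^{\oplus d}$. Any two such $\Theta$ differ by an automorphism of $G_m^{\oplus d}$ in its isogeny class, which up to permutation of summands is generated by Frobenius twists---this accounts for the ambiguity in the statement.

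Finally, the compatibility $\Theta \circ p\cdot\text{Id}_{\widehat{A^d}} = \text{Fr}_p \circ \Theta$ is essentially free. Since $\Theta$ is a group homomorphism, $\Theta \circ [p]_{\widehat{A^d}} = [p]_{G_m^{\oplus d}} \circ \Theta$. On $G_m$ one has $[p](x) = (1+x)^p - 1 = x^p$ in characteristic $p$, so $[p]_{G_m^{\oplus d}}$ coincides with the coordinate-wise $p$-th power, i.e.\ with $\text{Fr}_p$. The main obstacle in a self-contained treatment would be setting up enough Dieudonn\'e module formalism to prove the classification and the dictionary between heights/dimensions and Newton slopes; everything downstream is then arithmetic on the Newton polygon.
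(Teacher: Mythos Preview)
The paper does not give its own proof of this theorem; it is quoted as a known result from Manin's paper \cite{manin-2}, so there is nothing to compare against directly. Your outline is correct and is essentially the classical argument from that source: the Dieudonn\'e--Manin classification identifies isogeny classes of connected formal groups over $\bar{\fie}$ with Newton polygons, the ordinary hypothesis forces the connected part of $A^d[p^\infty]$ to have height equal to its dimension $d$, and the only convex polygon from $(0,0)$ to $(d,d)$ with all slopes in $(0,1]$ is the pure slope-$1$ segment, giving $\widehat{A^d}\sim G_m^{\oplus d}$. Your derivation of the compatibility $\Theta\circ[p]=\text{Fr}_p\circ\Theta$ from $[p]_{G_m}(x)=(1+x)^p-1=x^p$ is also correct.

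One small imprecision: your claim that any two isogenies $\Theta$ differ, up to permutation of summands, by ``Frobenius twists'' understates the automorphism group of $G_m^{\oplus d}$ in the isogeny category, which over $\bar{\fie}$ is all of $GL_d(\ra_p)$, not just monomial matrices with entries powers of $p$. The paper's own phrase ``defined up to Frobenius twist'' is equally informal, so this is not a gap in your argument so much as a place where both you and the paper are being loose; if you want a sharp uniqueness statement you should either fix a normalization (e.g.\ demand that $\Theta$ respect a chosen splitting coming from the filtration on the Dieudonn\'e module) or simply say $\Theta$ is unique up to an element of $GL_d(\ra_p)$.
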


\begin{remark}
\label{remark:ordin-ab-vars-uniform}
Theorem~\ref{theorem:manin-wide-a-structure} may be considered as
the first crucial manifestation of a ``torus-like" uniformization
of an (ordinary) Abelian variety we are up to (cf. the discussion
after Example~\ref{example:halphen-pen}). Elaborating further the
heuristics of Remark~\ref{remark:heur-strat}, let us collect more
evidence for such kind of a uniformization, thus making a road map
for the upcoming constructions. The strategy is like this: one
first ``localizes" $A$ in some way (to $\widehat{A}$ as above, or
to the \emph{Barsotti\,--\,Tate group} $A[p^{\infty}]$, say);
after that the ``rigidity" of $A$ comes into play (like the
Frobenius action above) to help one handle the patching data for
various ``localizations" of $A$.
\end{remark}

\refstepcounter{equation}
\subsection{Elliptic modules}
\label{subsection:misc-4}

We will now stick to the case of a \emph{global} ground field
$\fie$ having $p = \text{char}~\fie > 0$. Thus, $\fie$ is a finite
extension of the field $\f_p(t)$, where $t$ is a transcendental
parameter.

We are going to introduce another enhancement of formal groups
(cf. Remark~\ref{remark:formal-group-mult-by-p}), called
\emph{formal module} (structure), as well as its refinement,
called \emph{elliptic} (or \emph{Drinfel'd})
\emph{module}.\footnote{Our interest here is merely a consumer's
one (for simply to establish the canonicity property of the
isomorphism $\Theta$ in {\ref{subsection:misc-5}} below, using
Proposition~\ref{theorem:prop-ell-mod-1} and
Lemma~\ref{theorem:prop-ell-mod-2}) and the motivated reader
should turn to more fruitful expositions such as
\cite{deligne-husemoler}, \cite{lafforgue} or
\cite{mumford-kort-de-vries} for instance.} We will be mostly
following the papers \cite{drinfeld-ell-1}, \cite{drinfeld-ell-2}.

Let $G\in\fie[[x]]$ be a formal group as above. All the previous
definitions$\slash$constructions for $G$ may be equally carried
over some coefficient ring $B$ in place of $\fie$ (but still
$\text{char}~B = p$). Now, given $f \in \text{End}(G)\subset
B[[x]]$ we put $D(f) := f'(0)$, thus defining a ring homomorphism
(\emph{differential}) $D: \text{End}(G) \to B$. Assume in addition
that $B$ is an $R$-algebra with a ring homomorphism $e:R\to B$ for
the ring of integers $R\subset\fie$.

\begin{definition}[{cf. \cite[\S 1]{drinfeld-ell-1}}]
\label{definition:form-mod} One calls the pair $(G, \varphi)$ a
\emph{formal $R$-module over $B$} for $\varphi: R \to
\text{End}(G)$ being a ring homomorphism such that $D \circ
\varphi = e$. (The same definition goes over for $\fie$ in place
of $R$. Also, when speaking about formal $R$-modules over a field
$K\supseteq\fie$, we will be silently assuming that $e = $ the
embedding $R\subset\fie$.)
\end{definition}

\begin{example}
\label{example:add-formal-mod} If $G$ is the additive group of
$B$, $G(x, y) = x + y$ (cf. Example~\ref{example:mult-form-gr}),
and $\varphi(f) := fx$ for all $f \in R$, then one gets the
\emph{additive formal $R$-module} $\frak{A}$ (over $B$). The
crucial property of $\frak{A}$ is that for any formal $R$-module
$\frak{B}$ over $B$ there exists an isomorphism
$\frak{B}\stackrel{\iota}{\simeq}\frak{A}$ of formal
$\fie$-modules. Moreover, if one restricts to the case of
$D(\iota) = 1$, then such $\iota$ is unique (see \cite[Proposition
1.2]{drinfeld-ell-1}).
\end{example}

For the additive (algebraic) group $G$ of $B$, one has
$\text{End}(G) = B\{\{\text{Fr}_{p}\}\}$, the ring of all formal
power series $\summ_{i=0}^{\infty}b_i\cdot\text{Fr}_{p}^i$, $b_i
\in B$, where $b_0\cdot\text{Fr}_{p}^0 :=
\left[\text{multiplication by} \ b_0\right]$, so that
$\text{Fr}_{p}\cdot b = b^{p}\cdot\text{Fr}_{p}$ for all $b\in
B$.\footnote{When the (maximal) constant subfield of $\fie$ is
$\f_q$, with $q := p^k$ for some power $k\ge 1$, one should rather
take $\text{Fr}_{q} := \text{Fr}_{p}^k$ (and replace $\f_p$ by
$\f_q$).} Define the embedding $\varepsilon: B \hookrightarrow
B\{\{\text{Fr}_{p}\}\}$ via $b \mapsto b\cdot\text{Fr}_{p}^0$. Let
also $D: B\{\{\text{Fr}_{p}\}\} \to B$ be the differential
homomorphism as above (with
$D\left(\summ_{i=0}^{\infty}b_i\cdot\text{Fr}_{p}^i\right) :=
b_0$). Finally, we introduce the subring
$B\{\text{Fr}_{p}\}\subset B\{\{\text{Fr}_{p}\}\}$ of all
``polynomials" $\summ_{i=0}^{\infty}b_i\cdot\text{Fr}_{p}^i$,
i.\,e. $b_i = 0$ for $i \gg 1$.

From now on $B$ is a field.

\begin{definition}[{cf. \cite[\S 2]{drinfeld-ell-1}}]
\label{definition:ell-mod} A formal $R$-module $(G, \varphi)$ is
called \emph{elliptic} if $\varphi(R) \subseteq
B\{\text{Fr}_{p}\}$ and $\varphi(R)\not\subset\varepsilon(B)$.
\end{definition}

Elliptic modules come along with their \emph{rank}
$d\in\mathbb{N}$ defined by $p^{\deg\varphi(f)} = |f|^d$, $f \in
R$, for the absolute value $|\cdot|$ w.\,r.\,t. to the point
$\infty$ of $\fie$ (see \cite[Propositions 2.1,
2.2]{drinfeld-ell-1}). Further, let $\fie_{\infty}^s$ be the
separable closure of the completion $\fie_{\infty}$ of the field
$\fie$ at $\infty$, and set $B := \fie_{\infty}^s$ in what
follows. Consider $\Lambda \subset B$, a \emph{(period) lattice of
dimension $d$}, i.\,e. $\Lambda$ is a discrete
$\gal\left(\fie_{\infty}^s\slash\fie_{\infty}\right)$-invariant
$R$-module with $d$ generators. Then we define
$$
u(x) := x\prod_{0\ne \alpha\in\Lambda}(1 - \frac{x}{\alpha}).
$$
This is an entire function on $\mathbb{A}^1$ such that $u(x_1 +
x_2) = u(x_1) + u(x_2)$.\footnote{Here the affine line
$\mathbb{A}^1$ is identified with the topological space
$\mathbb{A}^1(\fie_{\infty}^s)$ carrying the rigid topology
induced from $\fie_{\infty}$.}

The essential feature of elliptic modules is the next fundamental

\begin{prop}[{see \cite[\S 3]{drinfeld-ell-1}}]
\label{theorem:prop-ell-mod-1} Let $(G, \varphi)$ be an elliptic
module over $\fie_{\infty}^s$ of rank $d$. Then, in the notations
of Example~\ref{example:add-formal-mod}, there exist $\Lambda =:
\Lambda_{(G, \varphi)}$ and $u =: u_{(G, \varphi)}$ as above such
that $\iota = u$ (for $(G, \varphi) = \frak{B}$). Furthermore, if
$f: G_1 \to G_2$ is a homomorphism of elliptic modules $(G_i,
\varphi_i)$ (i.\,e. $f\in B[x]$ and satisfies $\varphi_2\circ f =
f \circ\varphi_1$), with the associated lattices $\Lambda^i =:
\Lambda^i_{(G_i, \varphi_i)}$,\footnote{Note that $\Lambda^i$ are
necessarily of the same dimension.} then $w\Lambda^1 \subset
\Lambda^2$ for some $w \in \fie_{\infty}^s$. In particular, the
correspondence $(G, \varphi) \rightsquigarrow \Lambda$ is an
equivalence of categories.
\end{prop}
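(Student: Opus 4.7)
My plan is to reduce the statement to a study of the additive formal $\fie$-module $\frak{A}$ over $B = \fie_{\infty}^s$. By Example~\ref{example:add-formal-mod} there is a unique formal isomorphism $\iota\colon (G,\varphi)\stackrel{\sim}{\to}\frak{A}$ of formal $\fie$-modules normalized by $D(\iota) = 1$, living a priori only in $B[[x]]$. The first and crucial step is to upgrade $\iota$ to a genuine entire rigid-analytic function on $\mathbb{A}^1(\fie_{\infty}^s)$. Picking any $f\in R$ with $|f|>1$, the transported identity $\iota\circ\varphi(f) = f\cdot\iota$ can be iterated to give $\iota(x) = f^{-n}\iota(\varphi(f)^n(x))$ for all $n\ge 1$. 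Since $\varphi(f)\in B\{\text{Fr}_p\}$ is a polynomial of degree $|f|^d$ whose leading and linear coefficients are pinned down by the rank condition, non-archimedean Newton-polygon estimates applied to $\varphi(f)^n$ yield the convergence of $\iota$ on every affinoid disk $\{|x|\le r\}$.

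With $\iota$ an entire function in hand, set $\Lambda := \ker(\iota)\subset\fie_{\infty}^s$. Then $\Lambda$ is an $R$-submodule: the relation $\iota\circ\varphi(f) = f\cdot\iota$ forces $\varphi(f)(\Lambda)\subset\Lambda$, which in additive coordinates reads $f\Lambda\subset\Lambda$. Galois invariance of $\Lambda$ under $\gal(\fie_{\infty}^s/\fie_{\infty})$ follows from canonicity of $\iota$, since $G$, $\varphi$ and the normalization $D(\iota) = 1$ are all Galois-stable. Discreteness is immediate from rigid-analytic Weierstrass preparation on each ball, and the identity $p^{\deg\varphi(f)} = |f|^d$ (counting zeros of $\varphi(f)$) matches exactly the expected number of $\Lambda$-points inside a ball of radius comparable to $|f|$, forcing $\Lambda$ to be generated by $d$ elements over $R$.

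To identify $\iota$ with the Weierstrass-type product $u(x) := x\prod_{0\ne\alpha\in\Lambda}(1 - x/\alpha)$, note that this product converges as $|\alpha|\to\infty$ along the lattice, and $u$ is additive. Since $\iota$ and $u$ share their zero locus with multiplicities, the ratio $\iota/u$ is an entire nowhere-vanishing rigid-analytic function, hence constant by a non-archimedean Liouville argument; the normalization $D(\iota) = D(u) = 1$ pins this constant to $1$, giving $\iota = u$. For the functoriality claim, a morphism $f\colon G_1\to G_2$ conjugates via $\iota_1,\iota_2$ to an additive entire endomorphism of $\frak{A}$, necessarily of the form $x\mapsto wx$ for some $w\in\fie_{\infty}^s$; the compatibility $\varphi_2\circ f = f\circ\varphi_1$ then translates into $w\Lambda^1\subset\Lambda^2$. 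Essential surjectivity is obtained by reversing the construction: any discrete Galois-stable rank-$d$ $R$-lattice $\Lambda$ produces $u_\Lambda$ as above, and the $R$-action on $\fie_{\infty}^s/\Lambda$ transported through $u_\Lambda$ yields polynomials $\varphi(f)\in B\{\text{Fr}_p\}$ of degree $|f|^d$, reconstructing an elliptic module with lattice $\Lambda$.

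The main obstacle I anticipate is the analytic step in the first paragraph: extending the formal series $\iota$ to a rigid-analytic function on all of $\mathbb{A}^1(\fie_{\infty}^s)$ is the essentially transcendental input, and it depends crucially on the polynomiality of $\varphi(f)$ in $\text{Fr}_p$ together with the rank condition $p^{\deg\varphi(f)} = |f|^d$ governing the iterates $\varphi(f)^n$. Once this is accomplished, the rest is a formal comparison with a Weierstrass product, exactly parallel in spirit to the complex-analytic uniformization of elliptic curves discussed at the start of Section~\ref{subsection:misc-1}.
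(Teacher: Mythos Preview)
The paper does not give its own proof of this proposition: it is quoted verbatim from Drinfel'd's \S3 and used as a black box. Your outline is, in spirit, exactly Drinfel'd's argument --- produce the canonical formal isomorphism with the additive module, promote it to an entire function via the functional equation, read off the lattice as its kernel, and compare with the Weierstrass product. So there is nothing to contrast on the level of strategy.

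There is, however, a genuine error in the analytic step, and it is precisely at the point you flagged as the main obstacle. You treat $\iota$ as the map $G\to\frak{A}$ (the ``logarithm'') and write the functional equation as $\iota\circ\varphi(f)=f\cdot\iota$, then iterate to $\iota(x)=f^{-n}\iota\big(\varphi(f)^n(x)\big)$. This cannot show that $\iota$ is entire: for $|x|$ large the polynomial iterates $\varphi(f)^n(x)$ have \emph{larger} absolute value, so the right-hand side asks you to evaluate $\iota$ even further outside its known disk of convergence. In fact the logarithm of a Drinfel'd module has only a finite radius of convergence and is not entire; it is the \emph{exponential} $u:\frak{A}\to(G,\varphi)$ (this is the $u$ of the statement, and the $\iota$ of Example~\ref{example:add-formal-mod} read in the direction $\frak{A}\to\frak{B}$) that extends. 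The correct functional equation is $\varphi(f)\circ u = u\circ(f\cdot)$, and the iteration that works is
\[
u(x)=\varphi(f)^{\,n}\big(u(f^{-n}x)\big),
\]
which for $|f|>1$ pushes $f^{-n}x$ into the small disk where the formal series already converges, after which one applies the polynomial $\varphi(f)^n$. With this correction your kernel $\Lambda=\ker(u)$ sits inside $\frak{A}$ (so the $R$-action on it is the honest multiplication, as required for a lattice), and the rest of your sketch --- discreteness, rank via the degree count $p^{\deg\varphi(f)}=|f|^d$, the Liouville comparison with the product, and functoriality --- goes through as you wrote it.
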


\begin{example}
\label{example:g-m-is-an-ell-mod} With the formal group $G_m$ from
Example~\ref{example:mult-form-gr} we will (canonically) associate
an elliptic module over $\fie_{\infty}^s$. This together with
Theorem~\ref{theorem:manin-wide-a-structure} is the glimpse of a
uniformization we are looking for (cf.
Remarks~\ref{remark:heur-strat},
\ref{remark:ordin-ab-vars-uniform}). Take $f := tx +
x^p\in\text{End}(G_a)$ and consider the group $\cel_{f}$ of all
$x\in\fie_{\infty}^s$ such that $f^k(x) = 0$ for some (variable)
$k\ge 1$ (here $f^k := f\circ\underbrace{\ldots}_{k\
\text{times}}\circ f$ is the $k^{\text{th}}$ iteration of the
endomorphism $f$). Then for the isomorphism $\phi: G_m\to G_a$ the
lattice $\Lambda_m := \phi^{-1}(\cel_{f})$ determines an elliptic
module $(G, \varphi)$ with $\text{End}(G)\simeq\text{End}(G_m)$
canonically. Indeed, one can see that $\varphi(t)(x) = \phi\circ
f\circ\phi^{-1} = \log(1 + e^f - 1) = f$ (hence $\phi$ is an
entire function by Proposition~\ref{theorem:prop-ell-mod-1}) and
$G_m(\bar{x},\bar{y}) = u_{(G, \varphi)}(x+y)$ as (additive) group
laws, where $\bar{x},\bar{y}$ are the classes of $x,y$ modulo
$\Lambda_m$. One may thus think of $G_m$ as being the ``elliptic
curve" $G_a\slash_{\displaystyle\Lambda_m}$.
\end{example}

\begin{remark}
\label{remark:p-groups} In light of the above discussion, any
formal group $G$ can also be identified with its (profinite)
\emph{Dieudonn\'e module} (see \cite[Ch.\,I, \S
4]{manin-fg-big-paper} for instance), which yields an equivalence
between the category of formal groups and discrete modules (over
the ring of Witt vectors enhanced with $\text{Fr}_p$ and
$p\cdot\text{Id}_G$), similar to $\rightsquigarrow$ in
Proposition~\ref{theorem:prop-ell-mod-1}. However, as
Example~\ref{example:g-m-is-an-ell-mod} suggests (see also the
discussion in {\ref{subsection:misc-5}} below), the equivalence
$\rightsquigarrow$ is more fruitful.
\end{remark}

We conclude this subsection by proving the following:

\begin{lemma}
\label{theorem:prop-ell-mod-2} Let $(G, \varphi)$ be an elliptic
module and $f \in\aut~(G)\subset \text{End}~(G)$ its (formal)
automorphism (i.\,e. $f \circ \varphi \circ f^{-1} = \varphi$ and
$D(f) = 1$). Then $f = \text{Id}_G$.
\end{lemma}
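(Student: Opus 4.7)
The plan is to reduce this to a triviality on the associated lattice by invoking the equivalence of categories $(G,\varphi)\rightsquigarrow\Lambda$ from Proposition~\ref{theorem:prop-ell-mod-1}, and then extracting the scalar from the differential hypothesis $D(f)=1$. The rigidity is essentially forced: an endomorphism of an elliptic module is linearized by the uniformizing entire function $u$, so its differential at the origin equals the linear multiplier acting on $\Lambda$, and requiring this multiplier to be $1$ forces $f$ to be the identity.

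Concretely, I would proceed as follows. First, the commutation relation $f\circ\varphi\circ f^{-1}=\varphi$ says that $f$ is a morphism in the category of elliptic $R$-modules; being additionally invertible in $\mathrm{End}(G)$, it is an isomorphism of elliptic modules from $(G,\varphi)$ to itself. By Proposition~\ref{theorem:prop-ell-mod-1} this corresponds to an element $w\in\fie_{\infty}^{s}$ with $w\Lambda=\Lambda$, where $\Lambda=\Lambda_{(G,\varphi)}$ is the period lattice attached to $(G,\varphi)$, and the uniformizer $u=u_{(G,\varphi)}:\mathbb{A}^{1}\to G$ (normalized by $D(u)=1$, as in Example~\ref{example:add-formal-mod}) intertwines $f$ with multiplication by $w$, i.e.
\begin{equation}
\nonumber
f\bigl(u(x)\bigr)=u(wx).
\end{equation}

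Next I would differentiate this identity at $x=0$. Since $D(u)=1$ by the normalization in Example~\ref{example:add-formal-mod}, the chain rule yields $D(f)\cdot D(u)=w\cdot D(u)$, hence $w=D(f)$. By assumption $D(f)=1$, so $w=1$, and the morphism on the lattice side is the identity of $\Lambda$. Applying the equivalence $\rightsquigarrow$ in the reverse direction (which is injective on morphisms) we conclude $f=\mathrm{Id}_{G}$.

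The only point requiring real care is ensuring that the correspondence in Proposition~\ref{theorem:prop-ell-mod-1} genuinely identifies the scalar $w$ with the formal differential $D(f)$, i.e.\ that the uniformization $u$ is indeed the unique linearization with $D(u)=1$ singled out in Example~\ref{example:add-formal-mod}. This is where the hypothesis $\varphi(R)\not\subset\varepsilon(B)$ in Definition~\ref{definition:ell-mod} (the non-triviality that genuinely makes the module ``elliptic'' rather than purely additive) is used: it guarantees that $w\Lambda=\Lambda$ is a genuine discreteness constraint, so that no nontrivial rescaling is possible once the linear part is pinned down. Beyond this bookkeeping, the argument is a formal consequence of Proposition~\ref{theorem:prop-ell-mod-1}.
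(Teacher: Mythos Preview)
Your proof is correct. The paper's argument rests on the same idea but bypasses the lattice picture entirely: it invokes only the uniqueness statement from Example~\ref{example:add-formal-mod} (that the normalized linearization $\iota:\frak{B}\stackrel{\sim}{\to}\frak{A}$ with $D(\iota)=1$ is unique), observes that $\iota\circ f$ is again such a linearization because $f$ commutes with $\varphi$ and $D(f)=1$, and concludes $\iota\circ f=\iota$, hence $f=\text{Id}_G$. Your route through Proposition~\ref{theorem:prop-ell-mod-1} is equivalent in spirit---that proposition identifies $\iota$ with the entire function $u$---but it invokes the heavier equivalence-of-categories machinery, whereas the paper needs only the formal-power-series uniqueness (Drinfel'd's Proposition~1.2). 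Your final paragraph on the role of the hypothesis $\varphi(R)\not\subset\varepsilon(B)$ is slightly off target: that condition is what guarantees $\Lambda\ne 0$ so that Proposition~\ref{theorem:prop-ell-mod-1} applies at all, but once you have $w=1$ the conclusion $f=\text{Id}_G$ already follows from $f(u(x))=u(x)$ and the formal invertibility of $u$, with no discreteness argument needed.
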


\begin{proof}
We may regard $G$ as a formal $\fie$-module $\frak{B}$ (see
Example~\ref{example:add-formal-mod}). Then we have $(f \circ
\iota) \circ \varphi \circ (f \circ \iota)^{-1}(r) =
\iota^{-1}\circ\varphi\circ\iota(r) = [\text{multiplication by
r}]$ for all $r \in R$. Since $D(\iota) = D(f) = D(f \circ \iota)
= 1$, by uniqueness we get $f \circ \iota = \iota$, which implies
that $f = \text{Id}_G$.
\end{proof}

\refstepcounter{equation}
\subsection{Uniformization}
\label{subsection:misc-5}

We keep on with the previous notations. Let us assume in addition
that $A^d$ is ordinary and defined over $\fie$ (as in
{\ref{subsection:misc-4}}). Also, since the objects of primary
interest to us are rully $A^d$'s, especially those equal to
$\prod_{i=1}^d E_i$ for some elliptic curves $E_i\slash\fie$ (cf.
{\ref{subsection:int-66}}), we will assume w.\,l.\,o.\,g. that
$A^d$ is a mod $p~$ reduction of some Abelian
variety$\slash\overline{\ra}(t)$.

Fix some local analytic coordinates $x_1,\ldots,x_d$ on $A^d$ near
$0$ and consider an isomorphism of formal schemes
$$
\widehat{A^d}
\stackrel{\displaystyle\stackrel{\Theta}{\sim}}{\displaystyle\map}
\text{Spec}~\fie[[x_1,\ldots,x_d]].
$$
Replacing $\widehat{A^d}$ by $\widehat{A^d}^{(p^k)}$ for some
$k\ge 1$, we may additionally assume that there is an isogeny
$\widehat{A^d}\map G_m^{\oplus d}$, which factors through $\Theta$
(see Theorem~\ref{theorem:manin-wide-a-structure}). Moreover,
replacing each summand in $G_m^{\oplus d}$ by its Frobenius twist
(see Definition~\ref{definition:formal-group-isog} and
Example~\ref{example:frob}), we reduce to the case of $\Theta$ as
in Theorem~\ref{theorem:manin-wide-a-structure}. Thus $\Theta$ is
an isomorphism of formal groups $G := \widehat{A^d}$ and
$G_m^{\oplus d}$.

For another choice of local analytic coordinates $x'_i$ on $A^d$,
say $x' = \tau(x)$ for some $\tau\in \fie[[x]]$, so we get a
formal group $G'$ together with an isomorphism $\Theta'$, defined
similarly as $\Theta$. In order to extend $\Theta$ onto the entire
$A^d$ (or, if one likes, to uniformize $A^d$), it suffices to show
that $\Theta = \Theta'$ (cf. the proof of
Corollary~\ref{theorem:theta-can} below).

First we need the following:

\begin{lemma}
\label{theorem:for-gr-isom-g} $\tau$ induces an isomorphism $G
\simeq G'$ of formal groups.
\end{lemma}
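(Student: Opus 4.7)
The plan is to show that the change-of-coordinates $\tau$ intertwines the two power-series realizations of the intrinsic group law on the formal completion $\widehat{A^d}$ at $0$. By construction, $G(x,y)$ (respectively $G'(x',y')$) is the $d$-tuple of power series expressing the canonical multiplication morphism $m: \widehat{A^d}\times\widehat{A^d}\map\widehat{A^d}$ in the local coordinates $x=(x_1,\ldots,x_d)$ (respectively $x'=(x'_1,\ldots,x'_d)$). Since $m$ is a morphism of formal schemes intrinsic to $A^d$, passing from $x$ to $x'=\tau(x)$ amounts to a mere substitution in the power series representing $m$.

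Concretely, I would spell out the identification as follows. The pair $(x,y)$ is sent by $m$ to the point whose $x$-coordinates are $G(x,y)$; the $x'$-coordinates of that same point are therefore $\tau(G(x,y))$. On the other hand, evaluating $m$ directly in the $x'$-coordinates at the argument $(\tau(x),\tau(y))$ yields $G'(\tau(x),\tau(y))$. These two expressions describe the same point in the same coordinate system, whence
\[
\tau(G(x,y))=G'(\tau(x),\tau(y)).
\]
This is precisely the condition for $\tau$ to be a homomorphism of formal groups from $G$ to $G'$, in the sense of Definition~\ref{definition:formal-group-hom}.

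Invertibility of $\tau$ is immediate: a change of local analytic coordinates is represented by an invertible power series vector with $\tau(0)=0$ and $D(\tau)\in\mathrm{GL}_d(\fie)$, so that $\tau^{-1}$ exists and, by the same argument applied to $\tau^{-1}$, is also a homomorphism. Hence $\tau$ is an isomorphism $G\stackrel{\sim}{\to}G'$ of formal groups.

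I do not expect substantive obstacles. The only point requiring care is the interaction with the Frobenius twists used to pass to a setting where $\Theta$ is a genuine isomorphism onto $G_m^{\oplus d}$ (cf.~Example~\ref{example:frob} and Theorem~\ref{theorem:manin-wide-a-structure}): since $\mathrm{Fr}_p$ is functorial on formal schemes over $\fie$, replacing $\widehat{A^d}$ by $\widehat{A^d}^{(p^k)}$ commutes with any coordinate change and preserves the identity $\tau(G(x,y))=G'(\tau(x),\tau(y))$ verbatim, so no additional argument is needed in that case. This is the step I would double-check most carefully, as it is what makes the lemma useful for the subsequent proof that $\Theta=\Theta'$ via Lemma~\ref{theorem:prop-ell-mod-2}.
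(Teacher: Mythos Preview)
Your argument is correct and is essentially the paper's own: both rest on the observation that $G$ and $G'$ are two coordinate expressions of the single intrinsic multiplication $m:\widehat{A^d}\times\widehat{A^d}\to\widehat{A^d}$, so a coordinate change $\tau$ automatically satisfies $\tau(G(x,y))=G'(\tau(x),\tau(y))$. The paper packages this slightly differently, writing $\widehat{A^d}=\varinjlim_n\widehat{A^d}/n$ and checking the identity on each Artinian truncation (where it is ``evident'' because the group law there is the restriction of $A^d\times A^d\to A^d$), but this is only a rephrasing of your direct substitution argument.
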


\begin{proof}
Note that $\widehat{A^d} = \displaystyle\varinjlim_n
\widehat{A^d}/n$ for $\widehat{A^d}/n := \widehat{A^d}
\otimes_{\fie}\text{Spec}\left(\mathcal{O}_{0,
A^d}\slash\frak{m}^n\right)$ and the maximal ideal $\frak{m}
\subset \mathcal{O}_{0, A^d}$. Furthermore, the components of
$\tau(x)$ are all locally convergent, by the choice of $x'$. So it
is enough to prove the assertion for some fixed $\widehat{A^d}/n$
(defined over an Artin ring). But the latter is evident because
the group law on $\widehat{A^d}/n$ is induced by the composition
$A^d \times A^d \longrightarrow A^d$.
\end{proof}

From Lemma~\ref{theorem:for-gr-isom-g} we obtain a commutative
diagram
\begin{eqnarray}
\nonumber G & \stackrel{\tau}{\simeq} & G'\\
\nonumber\downarrow_{\Theta} && \downarrow_{\Theta'}\\
\nonumber G_m^{\oplus d} & = & G_m^{\oplus d}.
\end{eqnarray}
Here the equality $G_m^{\oplus d} = G_m^{\oplus d}$ reads as
$\Theta^{-1*}(x_i) = \Theta'^{-1*}(x'_{\sigma(i)})$ on
$G_m^{\oplus d}$ (for all $i$ and some fixed $\sigma \in S_d$) due
to the next

\begin{prop}
\label{theorem:for-gr-i}  $\Theta'\circ \tau\circ\Theta^{-1} =
\text{Id}_{G_m^{\oplus d}}$ (up to permutations of summands in
$G_m^{\oplus d}$).
\end{prop}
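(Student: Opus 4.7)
The plan is to show that $\psi := \Theta' \circ \tau \circ \Theta^{-1}$ is sufficiently rigid to coincide with a permutation of the $d$ factors of $G_m^{\oplus d}$. By construction $\psi$ is a formal-scheme automorphism, and in fact an isomorphism of formal \emph{groups}: $\tau$ is one by Lemma~\ref{theorem:for-gr-isom-g}, while $\Theta$ and $\Theta'$ are so by Theorem~\ref{theorem:manin-wide-a-structure}.

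I would next exploit Manin's compatibility $\Theta \circ p\cdot\text{Id}_G = \text{Fr}_p\circ\Theta$, and its analogue for $\Theta'$. Since $\tau$ commutes with the multiplication-by-$p$ endomorphism (the latter is built from the group law alone, which $\tau$ preserves), we obtain $\psi\circ\text{Fr}_p = \text{Fr}_p\circ\psi$ on $G_m^{\oplus d}$. Writing $\psi = (\psi_1,\ldots,\psi_d)$ and comparing the power-series expansions of $\psi_i(x_1^p,\ldots,x_d^p)$ and $\psi_i(x_1,\ldots,x_d)^p$ term by term (using $\text{char}=p$) forces every coefficient of each $\psi_i$ to lie in $\f_p$, a first significant rigidity.

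Endowing each copy of $G_m$ with the canonical elliptic $R$-module structure of Example~\ref{example:g-m-is-an-ell-mod}, where $\varphi(t) = tx + x^p$, the commutation with $\text{Fr}_p$ combined with $\psi$ being a formal group homomorphism allows me to upgrade $\psi$ to a homomorphism of the product elliptic module to itself. Via the lattice equivalence of Proposition~\ref{theorem:prop-ell-mod-1}, $\psi$ corresponds to an $R$-linear automorphism of $\Lambda_m^{\oplus d}\subset(\fie_{\infty}^s)^d$; any such automorphism factors as a block-permutation matrix composed with a diagonal piece, and the permutation part is precisely the $\sigma\in S_d$ promised in the statement. Composing $\psi$ with $\sigma^{-1}$, we reduce to showing that the resulting map $\psi_{\sigma}$ preserves each factor and acts on it as the identity.

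The expected main obstacle is to pin down $D(\psi_{\sigma})=I$ at the tangent space, i.e., to show that the diagonal scalars appearing above are all equal to $1$. This ought to follow from matching tangent-level data: $D(\tau)$ is just the change-of-coordinate Jacobian $\partial x'/\partial x$ at the origin of $A^d$, while the Manin normalization in Theorem~\ref{theorem:manin-wide-a-structure} (once the Frobenius twist is fixed by the choice of local coordinates, with the remaining permutation ambiguity already absorbed into $\sigma$) is rigid enough to guarantee that the tangent-space contributions of $\Theta$ and $\Theta'$ exactly cancel $D(\tau)$. Once $D(\psi_{\sigma})=1$ is secured on each factor, Lemma~\ref{theorem:prop-ell-mod-2} applied factor-by-factor forces $\psi_{\sigma} = \text{Id}_{G_m^{\oplus d}}$, completing the argument.
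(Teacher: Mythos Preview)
Your proposal shares the endgame with the paper --- both finish by invoking Lemma~\ref{theorem:prop-ell-mod-2} --- but the route you take to set up that invocation has two genuine gaps.

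First, your appeal to Proposition~\ref{theorem:prop-ell-mod-1} for the product $G_m^{\oplus d}$ is not legitimate as stated. Elliptic modules in the sense of Definition~\ref{definition:ell-mod} are built on the \emph{one-dimensional} additive formal group, and the lattice correspondence of Proposition~\ref{theorem:prop-ell-mod-1} lives inside a single copy of $\fie_\infty^s$. The $d$-dimensional product does not fit this framework, so you cannot directly read off an $R$-linear automorphism of $\Lambda_m^{\oplus d}\subset(\fie_\infty^s)^d$ from $\psi$. Even granting such a description, your claim that an $R$-automorphism of $\Lambda_m^{\oplus d}$ ``factors as a block-permutation matrix composed with a diagonal piece'' is false without further input: $\aut_R(\Lambda_m^{\oplus d})$ is a full $GL_d$-type group and off-diagonal entries are perfectly allowed. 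The Frobenius commutation you establish (forcing coefficients into $\f_p$) is a valid observation but does not by itself kill the off-diagonal blocks.

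Second, your step securing $D(\psi_\sigma)=I$ is, as you admit, only a hope. The paper handles this (for $d=1$) by noting that $\tau$ preserves the complex volume form on $A^d$, which gives $D(\Theta'\circ\tau\circ\Theta^{-1})=1$ directly; you would need an equally concrete argument rather than an appeal to unspecified rigidity of the Manin normalization.

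The paper sidesteps both issues by never treating the $d$-dimensional object globally. Instead it restricts $\psi$ to one-dimensional formal subgroups --- first the coordinate axes $G_{m,i}$ (which, after projection and Lemma~\ref{theorem:prop-ell-mod-2}, pins down $f_i\equiv x_i$ or $0$ modulo the other variables, up to a permutation), and then the diagonal lines $x_j=\lambda x_i$ for varying $\lambda\in\fie$ --- applying Lemma~\ref{theorem:prop-ell-mod-2} on each such line. Varying $\lambda$ forces the residual factors $\tilde f_i$ to be identically $1$. This slicing argument is elementary and stays entirely within the one-variable theory where the elliptic-module machinery is actually available; it is the replacement you need for your unjustified permutation-times-diagonal decomposition.
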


\begin{proof}
The case $d=1$ follows from Lemma~\ref{theorem:prop-ell-mod-2}.
Indeed, the elliptic module structure $\varphi$ associated with
$G_m$ is canonically determined by the group law on $G_m$ (see
Example~\ref{example:g-m-is-an-ell-mod}), which implies that
$f\circ\varphi\circ f^{-1} = \varphi$ for every automorphism
$f\in\aut~(G_m)$. On the other hand, since $\tau$ preserves the
complex volume form on $A^d$, we have $D(f) = 1$ for $f =
\Theta'\circ \tau\circ\Theta^{-1}$.

Now let $d=2$ (the case of arbitrary $d\ge 3$ differs only by more
involved notations). Given $f = \Theta'\circ
\tau\circ\Theta^{-1}$, we write $f =
(f_1,f_2),~f_i\in\fie[[x_1,x_2]]$, and denote by $G_{m,i}$ the
$i^{\text{th}}$ summand of $G_m^{\oplus d}$. After restricting $f$
to $G_{m,1}$, projecting $\text{Im}f$ onto $G_{m,1}$ (via
$(x_1,x_2)\mapsto x_1$) and applying
Lemma~\ref{theorem:prop-ell-mod-2} as above, we obtain that either
$f_1 = 0$ or $x_1$, both modulo the ideal $(x_2)$.\footnote{Up to
interchanges $x_i \leftrightarrow x_j$.} Similarly we have $f_2 =
x_2$ or $0$ mod $(x_1)$. Thus, we may assume that
$f_i=x_i\tilde{f}_i$ for some
$\tilde{f}_i\in\fie[[x_1,x_2]],~\tilde{f}_i = 1$ mod $(x_1,x_2)$.

Further, letting $x_2 := \lambda x_1$ for an arbitrary
$\lambda\in\fie$ and applying Lemma~\ref{theorem:prop-ell-mod-2}
once again, we obtain that both of the automorphisms of $G_{m,1}$,
given by
$$
x_1\mapsto (x_1\tilde{f}_1(x_1,\lambda x_1),~ \lambda
x_1\tilde{f}_2(x_1,\lambda x_1))\stackrel{\text{pr}}{\mapsto} x_1\
(\text{resp.}\ x_2),
$$
are trivial. This implies that $\tilde{f}_i=1$ and completes the
proof of Proposition~\ref{theorem:for-gr-i}.
\end{proof}

Further, we endow variety $A^d$ with a \emph{rigid analytic}
structure (over $\fie_{\infty}$), as defined in \cite{bosch-l} or
\cite{ber} for instance. The preceding discussion condensates to
the next

\begin{cor}
\label{theorem:theta-can} The components of the map $\Theta$ are
entire analytic functions on $A^d$.
\end{cor}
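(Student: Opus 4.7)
The plan is to extend $\Theta$ from a formal neighborhood of $0$ to all of $A^d$ by gluing its translates under the group law, with compatibility on overlaps controlled by Proposition~\ref{theorem:for-gr-i}. First, I would show that the formal power series $\Theta \in \fie_{\infty}^s[[x_1,\ldots,x_d]]$ of Theorem~\ref{theorem:manin-wide-a-structure} actually converges on a rigid analytic open polydisk $D_0 \subset A^d$ centered at $0$. The key input is the intertwining relation $\Theta \circ p\cdot\text{Id}_{\widehat{A^d}} = \text{Fr}_p \circ \Theta$: iterating this relation recursively bounds the coefficients of $\Theta$ in terms of the action of multiplication-by-$p$, producing uniform rigid estimates on a small polydisk (in the same spirit as the convergence of the logarithm of an ordinary formal group over a complete local field).

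Next, for each $a \in A^d(\fie_{\infty}^s)$, I would define a local rigid analytic map $\Theta_a : D_a \to G_m^{\oplus d}$ on the translate $D_a := a + D_0$ by $\Theta_a(x) := \Theta(t_{-a}(x))$, where $t_{-a}$ is translation by $-a$ in the algebraic group structure of $A^d$ (a rigid analytic automorphism of $A^d$). The family $\{D_a\}_{a \in A^d}$ manifestly covers $A^d$.

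The heart of the argument is to verify compatibility on overlaps. For $c \in D_a \cap D_b \ne \emptyset$, both $\Theta_a$ and $\Theta_b$ restrict to formal isomorphisms of the formal completion of $A^d$ at $c$ with $G_m^{\oplus d}$, related by the coordinate change $\tau = t_{b-a}$. Since $\tau$ is a translation, it preserves the invariant volume form on $A^d$ (so $D(\tau) = 1$) and induces an isomorphism of the local formal groups at $c$ by Lemma~\ref{theorem:for-gr-isom-g}. Proposition~\ref{theorem:for-gr-i} then yields $\Theta_a = \Theta_b$ on $D_a \cap D_b$ up to a permutation $\sigma(a,b) \in S_d$ of the summands of $G_m^{\oplus d}$. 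By rigid analytic continuity on the connected variety $A^d$, the permutation $\sigma(a,b)$ is locally constant, hence globally constant, and can be absorbed once and for all by relabeling the factors. The patched map $\Theta : A^d \to G_m^{\oplus d}$ is thus rigid analytic on all of $A^d$. Composing each of its components with the entire uniformizing function $u_{G_m} : G_a \to G_m$ from Example~\ref{example:g-m-is-an-ell-mod} (which is entire by Proposition~\ref{theorem:prop-ell-mod-1}) yields entire analytic functions on $A^d$.

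The hard part will be the first step — establishing honest rigid analytic convergence of the formal series $\Theta$ on a neighborhood of $0$, as opposed to a purely formal identity. The ordinarity of $A^d$ (so that $\widehat{A^d}\sim G_m^{\oplus d}$ in the first place) together with the Frobenius-intertwining property of $\Theta$ should force strong enough coefficient estimates; turning this into a quantitative bound on a fixed polydisk is where the technical work lies. Once $\Theta$ is known to converge on some genuine rigid open neighborhood of $0$, the rest of the argument is essentially formal, powered by the rigidity of elliptic modules (Lemma~\ref{theorem:prop-ell-mod-2}) that underlies Proposition~\ref{theorem:for-gr-i}.
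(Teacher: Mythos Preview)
Your overall strategy---establish local convergence, then glue via Proposition~\ref{theorem:for-gr-i}---is the same skeleton the paper uses, but the gluing step as you have written it does not work. You define $\Theta_a(x) := \Theta(t_{-a}(x)) = \Theta(x-a)$ and then claim that Proposition~\ref{theorem:for-gr-i} forces $\Theta_a = \Theta_b$ on $D_a \cap D_b$ (up to a permutation). This is false: for $a \ne b$ one has $\Theta_a(c) = \Theta(c-a)$ and $\Theta_b(c) = \Theta(c-b)$, which differ by $\Theta(a-b)$ in the group law of $G_m^{\oplus d}$ (since $\Theta$ is a formal group homomorphism). What Proposition~\ref{theorem:for-gr-i} actually gives, in your setup, is that the induced automorphism $\Theta_b \circ \tau \circ \Theta_a^{-1}$ of $G_m^{\oplus d}$ \emph{at the identity} is trivial; it says nothing about equality of $\Theta_a$ and $\Theta_b$ as maps into $G_m^{\oplus d}$ with different basepoints. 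To repair this you would have to correct each $\Theta_a$ by a translation in the target by $\Theta(a)$---but that presupposes knowing $\Theta(a)$ globally, which is exactly what you are trying to construct. The paper avoids this circularity by treating $\tau$ as a change of \emph{coordinates} at a single point rather than a translation of the domain, so that $\Theta' = \Theta \circ \tau^{-1}$ genuinely represents the same intrinsic map and the identity of Proposition~\ref{theorem:for-gr-i} reads directly as coordinate-independence, hence analytic continuability.

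Your convergence argument is also different in kind from the paper's. You propose to extract coefficient estimates from the intertwining $\Theta \circ (p\cdot\mathrm{Id}) = \mathrm{Fr}_p \circ \Theta$; the paper instead argues convergence on a chart $U_x$ by evaluating at an $n$-torsion point $P \in U_x$ and observing that $n\Theta_i(P) \in \Lambda_m$ (the lattice of Example~\ref{example:g-m-is-an-ell-mod}), so that $\Theta$ is pinned down on the dense torsion subset by arithmetic data. Finally, your last step---composing the components of $\Theta$ with $u_{G_m}: G_a \to G_m$---is misoriented: $\Theta$ already lands in $G_m^{\oplus d}$, not $G_a^{\oplus d}$, so there is nothing to compose with $u_{G_m}$ and this step should simply be dropped.
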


\begin{proof}
Proposition~\ref{theorem:for-gr-i} implies that $\Theta' =
\Theta\circ\tau^{-1}$ is an analytic continuation of $\Theta$.
More precisely, $(\Theta^{-1}\circ\tau^{-1})^*(x_i)$ are (formal)
analytic functions of $x$, for all $i$. Thus, it suffices to show
the components $\Theta_i(x)$ of $\Theta$ are entire functions of
$x$, i.\,e. $\Theta_i(x)$ converges on an analytic $x$-chart
$U_x\subset A^d,1\le i\le d$. We will treat only the case $d = 1$
(the case $d\ge 2$ is similar).

Pick an $n$-torsion point $P\in U_x\setminus{\{0\}}$ for some
$n\ge 2$. Then we have $n\Theta_i(P)\in\Lambda_m$ (see
Example~\ref{example:g-m-is-an-ell-mod}) by construction of
$\Theta$. This shows that $\Theta_i(x)$ is convergent on $U_x$.
\end{proof}

\begin{inter}
In the preceding considerations, we have used essentially two
kinds of arguments, very much typical to the (rigid)
uniformization theories. The first one makes it possible to treat
only the formal situation (i.\,e. that of formal schemes etc.),
employing the rigid topology of a complete ground field $\fie$
when needed (see \cite{mumford-curves-uni},
\cite{mumford-ab-vars-uni}, \cite{mccabe}, \cite{morikawa},
\cite{faltings-chai}, \cite{shaf-shap}). A model example is the
\emph{Tate curve} $E_q := \fie^*\slash
q^{\cel},~q\in\fie,~0<|q|<1$ (see \cite[\S
6]{tate-ellipt-curves}), which is an elliptic curve with the
$j$-invariant equal to
$\displaystyle\frac{1}{q}+744+196884q+\ldots$ (this example had
gotten further development in \cite{mumford-curves-uni},
\cite{mumford-ab-vars-uni} (cf. Remark~\ref{remark:manin-arakelov}
above), where the rigid uniformization was established for the
\emph{totally degenerate} curves and Abelian varieties). The
second type of arguments, dealing with $\text{char}~\fie
> 0$ and bringing a finer analysis into play, uses the ``inner
symmetry" of $\fie$ (as compared to the complex conjugation on
$\fie = \com$), that is the Frobenius $x\mapsto x^p,~x\in\fie$
(cf. Example~\ref{example:frob} and
Remarks~\ref{remark:heur-strat},
\ref{remark:ordin-ab-vars-uniform}). Classical illustration in
this case is the \emph{$p$-adic} (or \emph{Serre\,--\,Tate} if one
prefers) uniformization found in \cite{drinfeld-p-adic} (compare
with constructions in \cite{drinfeld-ell-1}, \cite{katz},
\cite{chai}). Namely, there is a formal scheme $\hat{\Omega}^d$,
parameterizing \emph{special} formal $O_D$-modules over various
(nilpotent) $R$-algebras (cf. {\ref{subsection:misc-4}}). Here $D$
is a central division algebra over $\fie$ with invariant $1/d$ and
$O_D\subset D$ is the ring of integers. Universal formal
$O_D$-module over $\hat{\Omega}^d$ admits a (canonical) system of
finite subgroups $\Gamma_k,k\ge 1$, flat over $\hat{\Omega}^d$,
which are the kernels of the multiplication by $t^k\in R$ (cf.
Example~\ref{example:g-m-is-an-ell-mod}). Now, taking $D$ to be
the quaternion ring over $\ra$, with maximal order $O_D$, the
schemes $\Gamma_k$ can be considered (up to compact factors) as
(formal) modular curves, the latter parameterizing \emph{special}
Abelian $2$-dimensional $O_D$-schemes (\emph{Cherednik's
theorem}).
\end{inter}

At this point, let us return to our model case $d = 1$, $A^d = E$
being an elliptic curve. This is done just to simplify the
notations and the reader should consult the discussion of the
general ($d\ge 2$) case at the end of this section. Further, we
identify $E$ with its ordinary mod $p$ reduction defined over
$\overline{\f}_p$ (cf. {\ref{subsection:misc-3}}), thus regarding
the surface $S := E\times\p^1$ as the curve $E$ over the field
$\fie$ (the one at the beginning of {\ref{subsection:misc-4}}).

Fix another prime $\ell\ne p$ and consider the $\ell$-primary
component $G^{\ell}_S\subset\gal(S) =:
\gal(\overline{\f_p(S)}\slash\f_p(S))$. Using ``topological"
Corollary~\ref{theorem:theta-can} (applied to $E\slash\fie$) we
are going to describe $G^{\ell}_S$ (more or less) explicitly (see
Proposition~\ref{theorem:g-ell-comp} below).

\refstepcounter{equation}
\subsection{Structure of $G^{\ell}_S$}
\label{subsection:misc-6}

Recall that we have constructed a one-to-one correspondence
between the points from $E(\fie_{\infty}^s)$ and some compact
subset in $\mathbb{A}^1(\fie_{\infty}^s)$. For $(x,y)\in E$, the
correspondence in question is given by an entire function
$\Theta(x,y)$ on $E(\fie_{\infty}^s)$, which yields a
\emph{homeomorphism} $\Theta:
E(\fie_{\infty}^s)\stackrel{\sim}{\to}\p^1(\fie_{\infty}^s)$.

Any Galois $\ell$-cover $c:\widetilde{E}\map E$ (over $\fie^s$) is
determined by its ramification divisor $o_1+\ldots+o_l$ and the
monodromy group $G_c$. Also, given a $\fie$-variety $X$, we have
$\gal(X) = \displaystyle\varinjlim_U\pi_1^{\text{alg}}(U)$ for
various Zariski open subsets $U\subset X$. This results in an
exact sequence
$$
1\to \gal(X\otimes\fie^s)\to\gal(X)\to\gal(\fie^s\slash\fie)\to 1
$$
(see e.\,g. \cite[\S 8]{grothendieck-formal-alg-geom}) and a
\emph{bijection}
$$\hbar: G^{\ell}_S\to G^{\ell}_{\p^2}$$ between the Galois
$\ell$-covers of $E\slash\fie$ ($\approx S$) and $\p^1\slash\fie$
($\approx\p^2$). Indeed, given $c:\widetilde{E}\map E$ as above
and the fact that $G^{\ell}_{\p^1}$ is pro-$\ell$-free (see
\cite{pop-gal-cov-curves}), one constructs the Galois cover
$\hbar(c): \widetilde{\p^1}\map\p^1$ (over $\fie^s$) as follows.
First, letting $E(\fie_{\infty}^s) =
\Theta(\p^1(\fie_{\infty}^s))$, we identify $c$ with a
\emph{topological} covering of $\p^1$. Hence $\p^1$ is
(topologically) a $G_c$-quotient of $\widetilde{E}$. Then we
choose $\hbar(c)$ to have the Galois group equal $G_c$ and
coincide with $c$ over an open subset of $\p^1$.\footnote{But the
ramification divisor of $\hbar(c)$ need not coincide with
$\Theta(o_1)+\ldots+\Theta(o_l)$.} Finally, using the presentation
$\p^1 = \widetilde{E}\slash G_c$, one can see the definition of
$\hbar$ is correct.

The next assertion is straightforward (its proof is left to the
reader as an exercise):

\begin{prop}
\label{theorem:g-ell-comp} $\hbar$ is a group
homomorphism.
\end{prop}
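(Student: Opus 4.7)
The plan is to reduce the homomorphism property to the level of finite quotients and then exploit the functoriality built into the construction of $\hbar$. Recall that $G^{\ell}_S = \varprojlim G_c$, the inverse limit taken over (isomorphism classes of) Galois $\ell$-covers $c:\widetilde{E}\to E$, with transition maps given by refinement, and similarly $G^{\ell}_{\p^1} = \varprojlim G_{\hbar(c)}$. By the very construction of $\hbar$ one has $G_{\hbar(c)} = G_c$ on the nose, so it suffices to check that whenever $c'$ refines $c$, the cover $\hbar(c')$ refines $\hbar(c)$ via the same surjection $G_{c'}\twoheadrightarrow G_c$. Granted this, $\hbar$ is the inverse limit of identity maps on finite quotients and is in particular a continuous group homomorphism (in fact, an isomorphism).

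To verify this refinement compatibility, I would fix a tower $\widetilde{E}'\stackrel{c''}{\longrightarrow}\widetilde{E}\stackrel{c}{\longrightarrow}E$ (with $c' := c\circ c''$ Galois) and push it through $\Theta$. Because $\Theta: E(\fie_{\infty}^s)\to\p^1(\fie_{\infty}^s)$ is a homeomorphism (Corollary~\ref{theorem:theta-can}), pullback by $\Theta^{-1}$ carries the tower to a tower of topological covers of $\p^1(\fie_{\infty}^s)$, together with its identification of Galois groups. The pro-$\ell$-freeness of $G^{\ell}_{\p^1}$ from \cite{pop-gal-cov-curves} then provides an algebraic realization of this topological tower: every finite $\ell$-group appears as the Galois group of some algebraic cover of $\p^1/\fie^s$, and among all algebraic covers inducing the prescribed topological one, any two represent the same element of $G^{\ell}_{\p^1}$. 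Unraveling the definition of $\hbar$, this is exactly the desired tower $\hbar(c')\to\hbar(c)\to\p^1$ with transition map $G_{c'}\twoheadrightarrow G_c$.

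The main obstacle the argument has to finesse is the ramification mismatch flagged in the footnote preceding the proposition: the ramification divisor of $\hbar(c)$ need not be $\Theta(o_1+\ldots+o_l)$, so the algebraization step is not a tight topological-to-algebraic correspondence. It is precisely here that pro-$\ell$-freeness enters essentially, as it asserts that the ambient profinite group is insensitive to the particular choice of ramification locus compatible with a given finite quotient --- all such choices yield the same element of $G^{\ell}_{\p^1}$. Checking that refinement in the inverse system respects this slackness, uniformly in a cofinal family of Galois $\ell$-covers, is the delicate technical heart of the exercise; once that is in place, everything else is bookkeeping with inverse limits and continuity of $\Theta$.
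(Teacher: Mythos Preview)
The paper does not actually prove this proposition: the text immediately preceding it says ``its proof is left to the reader as an exercise,'' and no argument is given. So there is no proof in the paper to compare your attempt against.

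Your approach --- realizing $\hbar$ as an inverse limit of the identity maps $G_c = G_{\hbar(c)}$ on finite quotients, and reducing the homomorphism property to compatibility with refinement of covers --- is the natural one and is almost certainly what the author has in mind by ``straightforward.'' You have also correctly located the only non-formal step: the ramification mismatch means the algebraization of the $\Theta$-transported topological tower involves a choice, and both well-definedness of $\hbar$ and its compatibility with towers depend on pro-$\ell$-freeness absorbing this slack. This is precisely the content of the paper's remark that ``using the presentation $\p^1 = \widetilde{E}\slash G_c$, one can see the definition of $\hbar$ is correct.''

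One point could be sharpened. Your assertion that ``among all algebraic covers inducing the prescribed topological one, any two represent the same element of $G^{\ell}_{\p^1}$'' is doing real work and is not a purely formal consequence of pro-$\ell$-freeness: freeness gives \emph{existence} of a surjection $G^{\ell}_{\p^1}\twoheadrightarrow G_c$ lifting any finite $\ell$-quotient, but the \emph{uniqueness} (up to conjugacy, which is what matters for an outer isomorphism) needs the topological input from $\Theta$ together with the presentation $\p^1 = \widetilde{E}\slash G_c$. Spelling out why the topological covering data pins down a conjugacy class of such surjections, compatibly in towers, is the actual substance of the exercise; the rest, as you say, is bookkeeping with inverse limits.
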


Thus we obtain an (outer) isomorphism $G^{\ell}_S\simeq
G^{\ell}_{\p^2}$ of groups. This creates the grounds for applying
results from \emph{anabelian birational geometry} (see
{\ref{subsection:misc-7}} and \cite{bog-tch-rec-funct-fie} for
relevant topics).

\begin{inter}
The previously constructed $\Theta$ points out (again) the
``pathological" nature of the $\text{char}>0$ world (cf.
\cite{esna-hog}, \cite{mumford-patholg-1},
\cite{mumford-patholg-2}, \cite{mehta-srin}, \cite{tate-genus}).
Indeed, $\Theta$ provides an ``analytic" isomorphism between $E$
and $\p^1$, though it does not induce an isomorphism of the
underlying rigid analytic structures -- otherwise $E\simeq\p^1$ as
algebraic curves by the (rigid) GAGA principle.\footnote{This
principle (almost tautologically) leads to an isomorphism
$G^{\ell}_S\simeq G^{\ell}_{\p^2}$ when $E = \fie^*\slash
q^{\cel}$ is the Tate curve. So
Proposition~\ref{theorem:g-ell-comp} (as well as its proof) may be
considered as a generalization of this fact (without direct appeal
to the rigid GAGA).} In particular, the function $\Theta$ does not
belong to the \emph{Tate algebra} of the local ring
$\mathcal{O}_{0,E}$, i.\,e. if $x\in\mathcal{O}_{0,E}$ is a local
parameter and $\Theta = \sum a_ix^i,~a_i\in\fie_{\infty}^s$, then
$|a_i|\not\to 0$ as $i\to\infty$. However, in line with Intermedia
at the end of {\ref{subsection:misc-5}}, our construction of the
above isomorphism $\hbar$ is (a sort of) a reminiscent of the
approach developed in \cite{pop-gal-cov-curves} to prove the
(geometric) Shafarevich's (resp. Abhyankar's) Conjecture. Indeed,
in \cite{pop-gal-cov-curves} one proceeds from local to global,
starting with a unit (rigid) disk $D$ (a \emph{germ of local
behavior}) in a given affine curve $U\slash{\bar{\fie}}$, after
what one constructs a Galois covering of $D$ (\emph{rigid Galois
cover}) and glues (analytically) all this data together, for
various $D\subset U$, to obtain a prescribed element in
$\pi_1^{\text{alg}}(U)$. Canonicity (aka correctness) of this
construction is ensured by the existence of Galois cyclic covers
of $\p^1\setminus{\{0,\infty\}}$, ramified only at $0$ and some
other $c\in\bar{\fie}$, with $|c|<1$. The latter, in turn, follows
simply from the Kummer and Artin\,--\,Schreier theories -- in
contrast with the ``analytic" arguments of
{\ref{subsection:misc-5}}.
\end{inter}

\refstepcounter{equation}
\subsection{Concluding discussion}
\label{subsection:misc-7}

We now return to the ground field $\fie = \overline{\f}_p$.

The isomorphism $G^{\ell}_S\stackrel{\hbar}{\simeq}
G^{\ell}_{\p^2}$ and the main theorem of \cite{pop-pro-ell-I}
yield a field $F$ together with (finite) Galois $\ell$-extensions
$F\supseteq\fie(S),~F\supseteq\fie(\p^2)$. More precisely,
according to \cite{pop-pro-ell-I} both $\fie(S)$ and $\fie(\p^2)$
have common purely inseparable closure, $F^{\text{pi}}$ say. Then,
using the canonical bijection
$\text{Out}\big(G^{\ell}_S,G^{\ell}_{\p^2}\big)\leftrightarrow\aut(F^{\text{pi}})$
between the outer isomorphisms of Galois groups and the field
automorphisms$\slash\fie$ (up to Frobenius twists), we pick
$G^{\ell}_S\simeq G^{\ell}_{\p^2}$ corresponding to
$\text{id}\in\aut(F^{\text{pi}})$, thus finding our $F$. This
results in a diagram
$$
\xymatrix{
&&W\ar@{->}[ld]_{r}\ar@{->}[rd]^{r'}&&\\
&S&&\p^2,&}
$$
where both $r,r'$ are some Galois $\ell$-covers (with isomorphic
Galois groups acting birationally on $W$), superseded probably by
p.\,i. maps. Furthermore, since there is no a priori preference in
the choice of the extension $F\supseteq\fie(S)$, we pick the
latter to be such that $r$ is unramified over generic fiber of the
projection $S\map\p^1$ (we are neglecting the Frobenius action for
the sheer transparency).

Fix some (generic) $t_0\in\fie$ and consider $E_{t_0} = E \times
t_0\subset S$. Let $E^r_{t_0}$ be the normalization of the scheme
preimage $r^{-1}(E_{t_0})$ in $\fie(W)$.

\begin{lemma}
\label{theorem:e-r-is-frob-e} $E^r_{t_0}$ is an elliptic curve.
\end{lemma}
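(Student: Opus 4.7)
The plan is to reduce the statement to the classical fact about étale covers of abelian varieties. By construction in the preceding paragraph, the extension $\fie(W) \supseteq \fie(S)$ is a finite Galois $\ell$-extension chosen so that $r: W \to S$ is unramified over the generic fiber of the projection $S = E \times \p^1 \to \p^1$. For $t_0 \in \fie$ sufficiently general, this unramifiedness persists, so $r|_{r^{-1}(E_{t_0})} : r^{-1}(E_{t_0}) \to E_{t_0}$ is finite étale of degree some power of $\ell$.

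Next, I would invoke the fact that every connected finite étale cover of an abelian variety is itself an abelian variety (and an isogeny onto the base): concretely, a connected étale cover of $E$ is an isogeny $E' \to E$ of elliptic curves. Hence, after normalization, every geometrically connected component of $r^{-1}(E_{t_0})$ is an elliptic curve $\ell$-isogenous to $E$. The clause \emph{normalization in $\fie(W)$} in the statement is then read as selecting the particular component whose function field is the subfield of $\fie(W)$ generated by $\fie(E_{t_0})$ together with the elements of $\fie(W)$ integral over the discrete valuation ring $\mathcal{O}_{S, E_{t_0}}$; this yields a single geometrically integral curve, which by the previous step is an elliptic curve.

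The one step requiring care, which I expect to be the main obstacle, is showing that the singled-out component really is geometrically connected. Equivalently, one must show that the subgroup of the Galois group $\mathrm{Gal}(\fie(W)/\fie(S))$ fixing the chosen component — i.e.\ the image in $\mathrm{Gal}(\fie(W)/\fie(S))$ of the decomposition group of (a place extending) the generic point of $E_{t_0}$ — acts trivially on the component set, not merely on one orbit. This should follow from the fact that the map $r$ was constructed from the isomorphism $\hbar: G^{\ell}_S \simeq G^{\ell}_{\p^2}$ via Pop's theorem, together with the compatibility that $\hbar$ sends the decomposition subgroup along a generic vertical fiber of $S \to \p^1$ to a decomposition subgroup on the $\p^2$ side (and the unramifiedness hypothesis chosen in the paragraph above). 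Granted this, the chosen component is a connected étale $\ell$-cover of $E$, hence an elliptic curve, completing the proof (up to a possible Frobenius twist, which as remarked has been silently neglected).
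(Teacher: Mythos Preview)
Your argument for the separable case is essentially the paper's: once $r$ is unramified over the generic fiber of $S\to\p^1$, the restriction $E^r_{t_0}\to E$ is an unramified cover, hence an elliptic curve isogenous to $E$. The paper states exactly this in one line.

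The substantive difference is that you dismiss the purely inseparable part of $r$ with a parenthetical (``up to a possible Frobenius twist''), whereas the paper spends most of the proof on it. Recall that $r$ arises from Pop's theorem as a Galois $\ell$-cover \emph{possibly superseded by p.\,i.\ maps}; the paper does not simply wave this away. Instead it argues directly with equations: if $\fie(W)\supseteq\fie(S)$ is p.\,i.\ of degree $p$, given by $z^p=f(x,y,t)$, then after passing to the Frobenius twist $E^{(p)}$ one factors $r$ through $r_1:W\to E^{(p)}\times\p^1$, and the fiber over $t_0$ has equation $z^p=f(x^p,y^p,t_0)$, whose normalization is the graph of $f(x,y,t_0)$ on $E$ --- hence birational to $E^{(p)}$. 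This is what justifies the later step ``replace $E_{t_0}$ by a Frobenius twist of $E^r_{t_0}$ if necessary''. Your proposal does not supply this computation, so as written it has a gap at the inseparable step rather than merely a notational omission.

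Your extended discussion of geometric connectedness (decomposition groups, compatibility of $\hbar$ with vertical fibers, etc.) is more careful than the paper, which simply asserts that the unramified cover $E^r_{t_0}\to E$ yields an elliptic curve without isolating a component. So on that point you are being more scrupulous, not less; but it is orthogonal to the part of the argument the paper actually regards as the content of the lemma.
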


\begin{proof}
Suppose first that $r$ is p.\,i. of degree $p^k$ for some $k\ge
1$. Let us assume for simplicity that $k=1$. Then the extension
$\fie(W)\supseteq\fie(S)$ is given by an equation $z^p =
f(x,y,t)$. Here $t$ is a coordinate on $\p^1$, $x,y$ are
coordinates on the affine model of $E$ (cf.
{\ref{subsection:misc-1}}), $z$ is an extra variable and
$f\in\fie(x,y,t)$. Replacing $E = E_{t_0}$ by $E^{(p)}$ (cf.
Example~\ref{example:frob}), we get a morphism $r_1: W\map
E^{(p)}\times\p^1 =: S_1$, which is a prolongation of $r$ via the
natural (Frobenius) map $S\map S_1$. Now, extension
$\fie(W)\supseteq\fie(S_1)$ is given by the equation $z^p =
f(x^p,y^p,t)$, which implies the normalization of
$r_1^{-1}(E^{(p)})$ ($= E^r_{t_0}$) is birational to $E^{(p)}$.
Indeed, $r_1^{-1}(E^{(p)})$ coincides with the locus $z^p =
f(x^p,y^p,t_0)$, so that its normalization is $\approx\
[\text{graph of the function}\ f(x,y,t_0) \ \text{on}\ E]$.

We go on to the case of arbitrary $r$. One may assume
w.\,l.\,o.\,g. that $r$ is separable. Then, by construction, the
covering $r: E^r_{t_0}\map E$ is unramified, thus $E^r_{t_0}$ is
an elliptic curve.
\end{proof}

Replace $E_{t_0}$ by a Frobenius twist of $E^r_{t_0}$ if necessary
to obtain the morphism $r'\big\vert_E$ (cf.
Lemma~\ref{theorem:e-r-is-frob-e}) is smooth. Again, assuming
w.\,l.\,o.\,g. $r'$ to be separable, we find that $r'(E) = E\slash
G$ for the Galois group $G$ of the covering $r'$. Finally, since
$G$ is an $\ell$-group, with $\ell\gg 1$ say, the quotient
$E\slash G$ is an elliptic curve. This shows (again) that $\p^2$
is b.-\,H. (over $\overline{\f}_p$).

\begin{inter}
One may spot a similarity between the preceding considerations and
those of paper \cite{bog-korot-tsch}. Recall that for any two
smooth curves $C,\bar{C}$ over $\fie=\overline{\f}_p,~p\ge 3$, an
\emph{isomorphism of pairs} $(C,J)\to (\bar{C},\bar{J})$ implies
the corresponding Jacobians $J$ and $\bar{J}$ are isogeneous (see
\cite[Theorem 1.2]{bog-korot-tsch}). In particular, given a curve
$C$ of genus $>4$, the maximal abelian quotient
$\mathcal{G}^{a}_K$ of the prime-to-$p$ part of the Galois group
$\gal(\overline{K}/K),~K := \fie(C)$, together with the collection
$\mathcal{I} := \{I_{\nu}\}_{\nu}$ of inertia subgroups
$I_{\nu}\subset\mathcal{G}^{a}_K$, determines the isogeny class of
the Jacobian of $C$ (see \cite[Theorem 9.3]{bog-korot-tsch}). We
recall that in the latter setting the isomorphism of pairs $\phi:
(C,J)\to (\bar{C},\bar{J})$ implies that two natural actions of
the Frobenius $\text{Fr}^k_p,~k\gg 1$ (when both considered in
$\text{End}(C)$), on the Tate modules
$T_{\ell}(J),T_{\ell}(\bar{J})$ commute. The idea then is to
reduce to the case $k=1$ and reconstruct an isogeny $J\map\bar{J}$
via \cite{tate-isog-frob}. Now, given two isomorphic data
$(\mathcal{G}^{a}_K,\mathcal{I})$ and
$(\mathcal{G}^{a}_{\bar{K}},\bar{\mathcal{I}})$ for $C$ and
$\bar{C}$, respectively, one essentially recovers an isomorphism
of pairs $\phi$ from isomorphic $\text{Div}^0$'s.
\end{inter}

\begin{remark}
\label{remark:further-path} Note that the morphism $r'$ can not be
purely inseparable. Indeed, otherwise the surface $W$ (hence also
$S$) is rationally connected, which forces $S$ to be rational (for
$S$ is \emph{ruled}). This shows in particular that the condition
$\text{Pic}(X) = \text{NS}(X)$ in \cite[Theorem
1]{fed-yu-univ-spaces-2} \emph{is} actually necessary for the
claim to be true as stated (compare with
\cite{fed-yu-univ-spaces-3}). Let us also remark that $W$ may be
considered as a $2$-dimensional analog of the \emph{universal
curve} found in \cite{fed-yur-unram-cor}. This is another
(supporting) side of the ``ideology" adopted in the present
section in order to relate the function fields $\fie(\p^2)$ and
$\fie(S)$.
\end{remark}

Up to this end all the arguments apply literarilly to
$\prod_{i=1}^nE_i$ in place of $E$ (respectively, to
$\prod_{i=1}^nE_i\times\p^1$ in place of $S$, $\p^{n+1}$ in place
of $\p^2$, etc.). Hence Conjecture~\ref{theorem:p-n-ab-pen} would
be proven once we showed the birational embedding $r':
\prod_{i=1}^nE_i\times t_0\dashrightarrow\p^{n+1}$, a priori
defined over $\overline{\f}_p$, can be lifted to $\text{char}~0$.

Note at this point that $r'$ corresponds to a $(n+1)$-dimensional
linear subsystem $\subseteq
H^0\big(\prod_{i=1}^nE_i,\mathcal{L}\big)$ for some
$\mathcal{L}\in\text{Pic}\big(\prod_{i=1}^nE_i\times_{\fie}\overline{\f}_p\big)$.
Then, since $r'$ is separable and generically $1$-to-$1$ on
$\prod_{i=1}^nE_i$, it suffices to show that $\mathcal{L}$ admits
a $\text{char}~0$ lifting. The latter is easy when $n = 1$, for
$\text{Pic}(E) = \cel\oplus\text{Pic}^0(E)$, and in the case $n\ge
2$ it suffices to take $E_i$ to be pair-wise non-isogeneous for
all $i$, so that $\text{Pic}\big(\prod_{i=1}^nE_i\big) =
\prod_{i=1}^n\text{Pic}(E_i)$. We leave this as an exercise to the
reader. (Hint: use Example~\ref{example:hasse}.)

\bigskip

\section{Miscellania}
\label{section:misce}

\refstepcounter{equation}
\subsection{}
\label{subsection:misce-1}

We would like to conclude by discussing another, geometric,
variant of the point of view we have tried to advocate throughout
the paper, namely that ``an Abelian variety $A^n$ with many
symmetries is (birationally) close to $\p^n$, after one mods out
$A^n$ by these symmetries." As a model illustration, we took $A^n
:= E_1\times\ldots\times E_n$, with elliptic curves $E_i$ defined
over a global field $\fie$ having $\text{char}~\fie = p>0$, and
acted this $A^n$ by (various powers of) the Frobenius. Then the
upshot of our thesis in this case are the results of subsections
{\ref{subsection:misc-5}}, {\ref{subsection:misc-6}} and
{\ref{subsection:misc-7}}. Geometric counterpart of this ideology
is as follows.

Consider $A^n := E^n$, where $E$ is an elliptic curve over $\fie =
\com$, and identify $A^n$ with $\text{Hom}(\Lambda,E)$ for a full
lattice $\Lambda\subset\mathbb{R}^n$. Thus there is a natural
$GL(n,\cel)$-action on $A^n$. More specifically, we consider
$R\subset GL(n,\cel)$, a reflection group, for which one can form
the quotient $Q := E^n/R$. Then it is expected (in accordance with
the edifice from the last paragraph) that variety $Q$ should be
``close to rational."

\begin{example}
\label{example:ed-looij} Classically, when $\Lambda$ is the
lattice w.\,r.\,t. the (reduced irreducible) root system for a
Weyl group $R$, it was shown in the beautiful paper \cite{looij}
that $Q$ is a weighted projective space. More precisely, let
$\Lambda$ span the linear space $V$, so that $n = \dim V$ is the
rank of the corresponding root system. Let $\alpha\in V$ be the
highest root and $\alpha^{\vee}\in V^*$ be its dual. This
$\alpha^{\vee}$ is a root in the dual root system and we can write
$\alpha^{\vee} = \displaystyle\sum_{i=1}^n g_i\alpha_i$ for the
basic roots $\alpha_i$ and some $g_i\in\mathbb{N}$. Then $Q =
\p(1,g_1,\ldots,g_n)$.
\end{example}

The setting of Example~\ref{example:ed-looij} was generalized
further in \cite{kol-lars}. Namely, it was shown in
\cite{kol-lars} (among other things) that the quotient of an
Abelian variety $A^n$ by a finite group $G\subset\aut(A^n)$ is
\emph{rationally connected} iff the $G$-action on the tangent
space $T_0(A^n)$ is irreducible and violates the
\emph{Reid\,--\,Tai property} (see \cite[Corollary 25]{kol-lars}).
In particular, Weyl groups from Example~\ref{example:ed-looij}
(with $A^n = E^n$) are like this, but at the same time the
following question seems to be out of reach (cf. \cite[Section
5]{kol-lars}):

\renewcommand{\thequestion}{K-L}

\begin{question}
\label{question:kol-lar} Let $A^n = E^n$ and $G$ be as in
Example~\ref{example:ed-looij}, but suppose now that $G$ is
\emph{any} finite group acting (isometrically, say) on $\Lambda$,
with induced $G$-action on $T_0(A^n)$ irreducible and non-RT. Is
then the quotient $X := A^n\slash G$ rational (or at least
unirational)?
\end{question}

Note that positive answer to Question~\ref{question:kol-lar} might
give an alternative way of proving
Conjecture~\ref{theorem:p-n-ab-pen} (for one may choose a general
Abelian hypersurface $A_t\subset A^n$ and proceed as in the proof
of Corollary~\ref{theorem:main-cor-a}). Let us outline how one
could answer Question~\ref{question:kol-lar} for \emph{complex
crystallographic reflection groups}.

Fix the quotient map $q: Y := E^n \map Y\slash G = X$ and write
\begin{equation}
\label{hurw-form} 0 = K_Y \equiv q^*\big(K_X + \sum_i
\frac{r_i-1}{r_i}R_i\big)
\end{equation}
for some $r_i\in\cel_{\ge 0}$ and $R_i\in\text{Weil}(X)$ (Hurwitz
formula). Note that $\#R_i = n+1$ by the assumption on $G$ (see
the list in \cite{popov}).

Consider a $G$-invariant divisor $H :=
q^*(\mathcal{O}_X(1))\in\text{Pic}(Y)$. The Appell\,--\,Humbert
theorem (see \cite{mumfor-ab-vars}) identifies $H$ with a positive
definite Hermitian form $I$ on $\com^n$ such that $\text{Im}(I)$
is skew-symmetric and $\cel$-valued on the lattice
$\Lambda\otimes_{\cel}H^1(E,\cel)$ ($= H^1(Y,\cel)$ canonically).
Note also that $I$ is $G$-invariant by construction. In
particular, since the $G$-representation $\com^n =
H^1(Y,\cel)\otimes\com$ is irreducible, the form $I$ is unique (up
to a constant multiple), which implies that $\text{Pic}(X) = \cel$
and $X$ is a \emph{log Fano} (see \cite{isk-prok}). Our intention
here is to prove that $X$ is \emph{toric}. In order to do this we
employ the following:

\begin{conj}[V.\,V. Shokurov]
\label{theorem:slava} Let $X$ be a normal $\ra$-factorial
algebraic variety with a $\ra$-boundary divisor $D = \sum d_i D_i$
($0\le d_i\le 1$ and $D_i$ are prime Weil divisors) such that

\begin{itemize}

    \item the divisor $-(K_X+D)$ is nef,

    \smallskip

    \item the pair $(X,D)$ is log canonical.

\end{itemize}
Then the estimate $\sum d_i \leq \text{rk}~\text{Pic}(X) + \dim X$
holds. Moreover, the equality is achieved iff the pair
$(X,\llcorner D\lrcorner)$ is toric, i.\,e. $X$ is toric and
$\llcorner D\lrcorner$ is its boundary.
\end{conj}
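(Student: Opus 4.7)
The plan is to prove the inequality and the equality characterization by induction on $\dim X + \text{rk}~\text{Pic}(X)$, reducing to a Mori fiber space via log MMP and invoking in the end an intrinsic toric characterization. First I would reduce to the case $K_X + D\equiv 0$ by writing $-(K_X+D)\sim_{\ra}\sum a_iH_i$ with $H_i$ general ample classes and replacing $D$ by $D+\sum a_iH_i$; a generic choice keeps the pair log canonical and increases $\sum d_i$ by exactly $\sum a_i$ while adding that same number of new components, so both sides of the claimed inequality shift by the same amount. Thus one may assume from the outset that $(X,D)$ is a log Calabi\,--\,Yau pair with $\llcorner D\lrcorner$ nontrivial.

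Next I would run a $(K_X + D - \epsilon\llcorner D\lrcorner)$-MMP for a sufficiently small $\epsilon>0$. Each step either contracts a divisorial component (dropping both $\text{rk}~\text{Pic}(X)$ and the count $\#\{D_i\}$ by one, so the inequality is preserved), or is a log flip (neither quantity changes), or terminates in a Mori fiber space $f:X'\map Z$. Thus one may assume $f:X\map Z$ is a Mori fiber space. The boundary decomposes as $D=D^h+D^v$ (horizontal and vertical parts). On a general fiber $F$, adjunction gives a log Calabi\,--\,Yau pair $(F, D^h\big\vert_F)$ of relative Picard rank $1$, to which I would apply the inductive statement in dimension $\dim F$. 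Simultaneously, via the canonical bundle formula $D^v$ descends to a boundary $D_Z$ on $Z$ satisfying the same hypotheses, and dimensional induction applies to $(Z, D_Z)$. Summing the two estimates one obtains $\sum d_i\le\text{rk}~\text{Pic}(X)+\dim X$.

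For the equality part, the induction shows both the general fiber and the base are toric and that each contribution attains its own maximum. I would then glue the two toric structures into one on $X$ by using that any equivariant $\p^n$-bundle over a toric variety is toric, or, more flexibly, appeal to a Brown\,--\,McKernan\,--\,Svaldi\,--\,Zong-style toric criterion (a log Fano pair whose boundary achieves the maximal sum of coefficients is a toric pair). The boundary $\llcorner D\lrcorner\big\vert_F$ already has $\dim F + 1$ components, forcing $F\simeq\p^{\dim F}$ and the monodromy of $f$ to be trivial on its toric divisors, after which compatibility of the two fans produces a global toric structure with $\llcorner D\lrcorner$ as its boundary.

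The hard part will be the final gluing step. In the non-klt setting essential here (coefficients $d_i=1$ are allowed), the log canonical centers drive the geometry and their behavior under divisorial contractions and log flips is delicate; one must track the dual complex of $(X,\llcorner D\lrcorner)$ through each MMP step and show that equality in the bound propagates all the way down. A secondary obstacle is that the reduction to $K_X+D\equiv 0$ requires genericity of the added ample classes, which can conflict with preserving log canonicity along higher-codimension lc strata. Resolving both amounts in effect to a toric version of the cone theorem -- that every extremal ray of a log Fano pair with maximal boundary sum is contracted by a toric morphism -- and it is this assertion that would make the inductive glueing truly clean.
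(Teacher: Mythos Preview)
The paper does not prove this statement: it is stated as a \emph{conjecture} (attributed to Shokurov) and is used only in the special case $\text{Pic}(X)=\cel$, where the paper simply cites Prokhorov's result \cite[Corollary 2.8]{pro-1}. There is therefore no proof in the paper to compare your attempt against; you are proposing an argument for an assertion the author explicitly leaves open.

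As for the substance of your sketch: the inductive scheme via the MMP and a Mori fiber space decomposition is indeed the standard line of attack on Shokurov's complexity conjecture, and the inequality part is more or less accessible along these lines (and is close to what is done in the references you allude to). The genuine difficulty, which you yourself flag, is the equality characterization in the log canonical (not klt) regime. Your ``gluing'' step is not a proof but a wish: knowing that the general fiber and the base are toric does not by itself force $X$ to be toric, and invoking a Brown--McKernan--Svaldi--Zong-type criterion is circular, since that criterion \emph{is} essentially the statement you are trying to prove. Tracking the dual complex through flips and divisorial contractions in the non-klt setting, and showing that equality propagates, is precisely the content of the conjecture and is not something one can dispatch in a paragraph. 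So your proposal correctly identifies the architecture but does not close the gap that makes this a conjecture rather than a theorem.
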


Recall that Conjecture~\ref{theorem:slava} had been verified in
the case $\text{Pic}(X) = \cel$ (see \cite[Corollary 2.8]{pro-1})
and so we can apply it to our $X = Y\slash G$ (see
\cite{ilya-toric} for further discussion of
Conjecture~\ref{theorem:slava} and its variations). Namely, fix
some generator $\mathcal{O}_X(1)$ of $\text{Pic}(X)$, so that $R_i
= \mathcal{O}_X(r_im_i)$ in \eqref{hurw-form} for some integers
$m_i\ge 1$ and all $i$. Then to show that $X$ is toric it suffices
to establish the pair $(X,\displaystyle\sum_i R_{i,X})$ is log
canonical for generic $R_{i,X}\in |\mathcal{O}_X(r_i-1)|$ (recall
that $\#R_i = n+1$, i.\,e. we handle all the conditions in
Conjecture~\ref{theorem:slava}, except for the lc one). The
latter, in turn, is easily seen to be equivalent to the same
statement about $Y$ in place of $X$ and generic $G$-invariant
divisors $R_{i,Y}\in |H^{r_i-1}|$ in place of $R_{i,X}$,
respectively. Then we can construct $G$-invariant theta
characteristics $\theta_i$ corresponding to $R_{i,Y}$ exactly as
in \cite[\S 4]{looij}, taking $I$ to be the Coxeter matrix
w.\,r.\,t. $G$, say. At this point, one can do without knowing
algebraic relations between $\theta_i$ in the ring
$\bigoplus_{m\ge 0}H^0(Y,H^m)$ (as it was in \cite{looij}), and
all one has to check is that the loci $\theta_i = 0$ (for all $i$
and generic $\theta_i$) determine a log canonical pair.

\begin{remark}
\label{remark:comp-looij} Obviously, the remaining assertion one
is left with (in order to prove that $X$ is toric) is easier to
deal with and applies in the more general setting, compared to the
the proof of Theorem 3.4 in \cite{looij}. However, the preceding
arguments can only show that $X$ is a \emph{fake weighted
projective space} (see \cite{kaspr}), lacking the explicit
description of $X$ in terms of $G$ as in
Example~\ref{example:ed-looij}.
\end{remark}

\refstepcounter{equation}
\subsection{}
\label{subsection:misce-3}

Regrettably, due to the lack of space and the author's competence
we could not touch many other topics, concerning (both birational
and biregular) geometry of Abelian fibrations. We thus restrict
ourselves to simply stating some of these (together with
formulation of relevant problems), hoping the technique (and
results) of the present paper (in interaction with methods of the
articles we are going to mention) will bring further insight and
development to this beautiful subject.

\renewcommand{\theenumi}{\Alph{enumi}}

\begin{enumerate}

\item\label{1-ita}

In view of Conjecture~\ref{theorem:p-n-ab-pen} and constructions
in Section~\ref{section:misc}, it would be interesting to find out
whether an Abelian fibration $f: \p^n\dashrightarrow B$, with
$\dim X \ge \dim B + 2$, is isotrivial, and if it is not, does it
possess a section. Again, comparing with the $n=2$ situation, one
should not expect ``many" (up to birational equivalence)
non-isotrivial $f$'s, while for isotrivial ones there should exist
a complete (effective?) description (cf.
Example~\ref{example:halphen-pen}). Leaving the non-isotrivial
problem (which is of the existence-type and should be the most
difficult) silently aside, for isotrivial $f$ the case of
particular interest is when $\dim B$ is small, say $\dim B \le 2$.
If this holds, one may apply the technique of \cite{libgober} for
example to estimate the Mordell\,--\,Weil group, discriminant
locus, etc. for the desingularized $f$.

\item\label{2-ita} The method of Section~\ref{section:misc}
is implicit and it would be interesting to construct Abelian
fibrations on rational varieties explicitly (as we have attempted
to do in {\ref{subsection:misce-1}}). Classically, there are
Horrocks\,--\,Mumford Abelian surfaces in $\p^4$ (see
\cite{hor-mum}). This result was extended in \cite{hulek} to find
Abelian surfaces in $\p^1\times\p^3$, then in
$\p(\mathcal{O}_{\p^2}\oplus\mathcal{O}_{\p^2}(1)\oplus\mathcal{O}_{\p^2}(1))$)
(see \cite{sank}), etc. In the beautiful papers \cite{gross-pop},
\cite{gross-pop-1}, one finds pencils of Abelian surfaces lying on
Calabi\,--\,Yau $3$-folds, and for a birational construction (of
rational fibrations on $\p^5$ by degree $3$ nodal surfaces) we
refer to \cite{totar}. On this way, one obtains a source of
explicit examples of Abelian fibrations on $\p^n,~n\gg 1$.

\item\label{3-ita} We would like to distinguish/characterize b.-\,H. structures on a rationally connected $X$ ($=\p^n$
for instance) by existence of a certain sheaf $\mathcal{L}$ on $X$
with particular values of
$c_i(\mathcal{L}),~\text{rk}~\mathcal{L}$, etc. Also we would like
to know how b.-\,H. behaves under small deformations. See
\cite{kollar-ellipt-cy} for relevant discussion and results.

\item\label{4-ita} Finally, it might be of some interest (and use)
to compare the matters in (\ref{1-ita}), (\ref{2-ita}),
(\ref{3-ita}) with similar ones for symplectic (or Poisson, or
$\ldots$) manifolds in place of rational varieties (see
\cite{beau-holsymplect}, \cite{sawan} for an overview of related
problems).

\end{enumerate}

\bigskip

\thanks{{\bf Acknowledgments.}
I am grateful to F. Bogomolov for numerous helpful discussions and
advice. Thanks to I. Cheltsov, A. Kuznetsov, Yu.G. Prokhorov, C.
Shramov, and K. Zainoulline for fruitful conversations. Some parts
of the manuscript were prepared during my visit to Stanford
University in July 2012 and I am grateful to Y. Rubinstein for
hospitality. Finally, the present work was completed in a friendly
and stimulating atmosphere of the Max Planck Institute for
Mathematics, which I am happy to acknowledge.

\bigskip


\begin{thebibliography}{89}

\bibitem{art-mum}
M. Artin\ and\ D. Mumford, Some elementary examples of unirational
varieties which are not rational, Proc. London Math. Soc. (3) {\bf
25} (1972), 75~--~95.

\smallskip

\bibitem{arh-et-al}
I. Arzhantsev\ et al., Flexible varieties and automorphism groups,
Duke Math. J. {\bf 162} (2013), no.~4, 767~--~823.

\smallskip

\bibitem{beau-holsymplect}
A. Beauville, Holomorphic symplectic geometry: a problem list, in
{\it Complex and differential geometry}, 49~--~63, Springer Proc.
Math., 8 Springer, Heidelberg.

\smallskip

\bibitem{ber}
V. G. Berkovich, {\it Spectral theory and analytic geometry over
non-Archimedean fields}, Mathematical Surveys and Monographs, 33,
Amer. Math. Soc., Providence, RI, 1990.

\smallskip

\bibitem{fedya-1}
F. A. Bogomolov, Izv. Akad. Nauk SSSR Ser. Mat. {\bf 51} (1987),
no. 3, 485~--~516, 688; translation in Math. USSR-Izv. {\bf 30}
(1988), no.~3, 455~--~485.

\smallskip

\bibitem{fedya-2}
F. A. Bogomolov, Mat. Sb. {\bf 180} (1989), no. 2, 279~--~293;
translation in Math. USSR-Sb. {\bf 66} (1990), no.~1, 285~--~299.

\smallskip

\bibitem{fed-yu-univ-spaces-1}
F. Bogomolov, Stable cohomology of finite and profinite groups, in
{\it Algebraic groups}, 19~--~49, Universit\"atsverlag
G\"ottingen, G\"ottingen.

\smallskip

\bibitem{bog-brauer-conn-lin-groups}
F. A. Bogomolov, Stable rationality of quotient spaces for simply
connected groups, Mat. Sb. (N.S.) {\bf 130(172)} (1986), no.~1,
3~--~17, 128.

\smallskip

\bibitem{bkk}
F. Bogomolov, I. Karzhemanov,\ and\ K. Kuyumzhiyan, Unirationality
and existence of infinitely transitive models, in {\it Birational
Geometry, Rational Curves, and Arithmetic} (Fedor Bogomolov,
Brendan Hassett, Yuri Tschinkel, eds.), Simons Symposia, Springer
New York (2013), 77~--~92.

\smallskip

\bibitem{bog-korot-tsch}
F. Bogomolov, M. Korotiaev\ and\ Y. Tschinkel, A Torelli theorem
for curves over finite fields, Pure Appl. Math. Q. {\bf 6} (2010),
no.~1, Special Issue: In honor of John Tate. Part 2, 245~--~294.

\smallskip

\bibitem{bog-tch-rec-funct-fie}
F. Bogomolov\ and\ Y. Tschinkel, Introduction to birational
anabelian geometry, in {\it Current developments in algebraic
geometry}, 17~--~63, Math. Sci. Res. Inst. Publ., 59 Cambridge
Univ. Press, Cambridge.

\smallskip

\bibitem{fed-yu-univ-spaces-2}
F. Bogomolov\ and\ Y. Tschinkel, Reconstruction of function
fields, Geom. Funct. Anal. {\bf 18} (2008), no.~2, 400~--~462.

\smallskip

\bibitem{fed-yu-univ-spaces-3}
F. Bogomolov\ and\ Y. Tschinkel, Reconstruction of
higher-dimensional function fields, Mosc. Math. J. {\bf 11}
(2011), no.~2, 185~--~204, 406.

\smallskip

\bibitem{fed-yur-unram-cor}
F. Bogomolov\ and\ Y. Tschinkel, Unramified correspondences, in
{\it Algebraic number theory and algebraic geometry}, 17~--~25,
Contemp. Math., 300 Amer. Math. Soc., Providence, RI.

\smallskip

\bibitem{bosch-l}
S. Bosch\ and\ W. L\"utkebohmert, Formal and rigid geometry. I.
Rigid spaces, Math. Ann. {\bf 295} (1993), no.~2, 291~--~317.

\smallskip

\bibitem{chai}
C.-\,L. Chai, Families of ordinary abelian varieties: canonical
coordinates, p-adic monodromy, Tate-linear subvarieties and Hecke
orbits, Preprint (2003), available at {\it
http://www.math.upenn.edu/${\scriptscriptstyle\sim}$chai/papers.html}.

\smallskip

\bibitem{vanya-kod-0-in-vasyas-paper}
I.\, A. Cheltsov, Mat. Zametki {\bf 68} (2000), no. 1, 131~--~138;
translation in Math. Notes {\bf 68} (2000), no.~1-2, 113~--~119.

\smallskip

\bibitem{vanya-ilya}
I. Cheltsov\ and\ I. Karzhemanov, Halphen pencils on quartic
threefolds, Adv. Math. {\bf 223} (2010), no.~2, 594~--~618.

\smallskip

\bibitem{jihun-vanya}
I. Cheltsov\ and\ J. Park, Halphen pencils on weighted Fano
threefold hypersurfaces, Cent. Eur. J. Math. {\bf 7} (2009),
no.~1, 1~--~45.

\smallskip

\bibitem{chev}
C. Chevalley, On algebraic group varieties, J. Math. Soc. Japan
{\bf 6} (1954), 303~--~324.

\smallskip

\bibitem{cle-gri}
C. H. Clemens\ and\ P. A. Griffiths, The intermediate Jacobian of
the cubic threefold, Ann. of Math. (2) {\bf 95} (1972),
281~--~356.

\smallskip

\bibitem{colliot-oj}
J.-L. Colliot-Th\'el\`ene\ and\ M. Ojanguren, Vari\'et\'es
unirationnelles non rationnelles: au-del\`a de l'exemple d'Artin
et Mumford, Invent. Math. {\bf 97} (1989), no.~1, 141~--~158.

\smallskip

\bibitem{voskresenskij}
J.-L. Colliot-Th\'el\`ene\ and\ J.-J. Sansuc, La $R$-\'equivalence
sur les tores, Ann. Sci. \'Ecole Norm. Sup. (4) {\bf 10} (1977),
no.~2, 175~--~229.

\smallskip

\bibitem{colliot-sansuc}
J.-L. Colliot-Th\'el\`ene\ and\ J.-J. Sansuc, The rationality
problem for fields of invariants under linear algebraic groups
(with special regards to the Brauer group), in {\it Algebraic
groups and homogeneous spaces}, 113~--~186, Tata Inst. Fund. Res.
Stud. Math, Tata Inst. Fund. Res., Mumbai.

\smallskip

\bibitem{corti}
A. Corti, Singularities of linear systems and $3$-fold birational
geometry, in {\it Explicit birational geometry of 3-folds},
259~--~312, London Math. Soc. Lecture Note Ser., 281 Cambridge
Univ. Press, Cambridge.

\smallskip

\bibitem{deligne-ord-ab-vars}
P. Deligne, Vari\'et\'es ab\'eliennes ordinaires sur un corps
fini, Invent. Math. {\bf 8} (1969), 238~--~243.

\smallskip

\bibitem{deligne-husemoler}
P. Deligne\ and\ D. Husemoller, Survey of Drinfel'd modules, in
{\it Current trends in arithmetical algebraic geometry (Arcata,
Calif., 1985)}, 25~--~91, Contemp. Math., 67 Amer. Math. Soc.,
Providence, RI.

\smallskip

\bibitem{ogus}
P. Deligne\ et al., {\it Hodge cycles, motives, and Shimura
varieties}, Lecture Notes in Mathematics, 900, Springer, Berlin,
1982.

\smallskip

\bibitem{dieud}
J. Dieudonn\'e, Groupes de Lie et hyperalg\'ebres de Lie sur un
corps de caract\'eristique $p>0$, Comment. Math. Helv. {\bf 28}
(1954), 87~--~118.

\smallskip

\bibitem{dolg-bert}
I. V. Dolgachev, Rational surfaces with a pencil of elliptic
curves, Izv. Akad. Nauk SSSR Ser. Mat. {\bf 30} (1966),
1073~--~1100.

\smallskip

\bibitem{dolg-gross}
I. Dolgachev\ and\ M. Gross, Elliptic threefolds. I.
Ogg-Shafarevich theory, J. Algebraic Geom. {\bf 3} (1994), no.~1,
39~--~80.

\smallskip

\bibitem{donaldson-kahler-hamiltonian}
S. K. Donaldson, Symmetric spaces, K\"ahler geometry and
Hamiltonian dynamics, in {\it Northern California Symplectic
Geometry Seminar}, 13~--~33, Amer. Math. Soc. Transl. Ser. 2, 196
Amer. Math. Soc., Providence, RI.

\smallskip

\bibitem{drinfeld-ell-1}
V. G. Drinfel'd, Elliptic modules, Mat. Sb. (N.S.) {\bf 94(136)}
(1974), 594~--~627, 656.

\smallskip

\bibitem{drinfeld-ell-2}
V. G. Drinfel'd, Elliptic modules. II, Mat. Sb. (N.S.) {\bf
102(144)} (1977), no.~2, 182~--~194, 325.

\smallskip

\bibitem{drinfeld-p-adic}
V. G. Drinfel'd, Coverings of $p$-adic symmetric domains,
Funkcional. Anal. i Prilo\v zen. {\bf 10} (1976), no.~2, 29~--~40.

\smallskip

\bibitem{esna-hog}
H. Esnault\ and\ A. Hogadi, On the algebraic fundamental group of
smooth varieties in characteristic $p>0$, Trans. Amer. Math. Soc.
{\bf 364} (2012), no.~5, 2429~--~2442.

\smallskip

\bibitem{faltings-chai}
G. Faltings\ and\ C.-L. Chai, {\it Degeneration of abelian
varieties}, Ergebnisse der Mathematik und ihrer Grenzgebiete (3),
22, Springer, Berlin, 1990.

\smallskip

\bibitem{gross-pop}
M. Gross\ and\ S. Popescu, Calabi-Yau threefolds and moduli of
abelian surfaces. I, Compositio Math. {\bf 127} (2001), no.~2,
169~--~228.

\smallskip

\bibitem{gross-pop-1}
M. Gross\ and\ S. Popescu, Calabi-Yau three-folds and moduli of
abelian surfaces II, Trans. Amer. Math. Soc. {\bf 363} (2011),
no.~7, 3573~--~3599.

\smallskip

\bibitem{grothendieck-formal-alg-geom}
A. Grothendieck, G\'eom\'etrie formelle et g\'eom\'etrie
alg\'ebrique, in {\it S\'eminaire Bourbaki, Vol.\ 5}, Exp.\ \ 182,
193~--~220, errata p.\ 390, Soc. Math. France, Paris.

\smallskip

\bibitem{hac-mc}
C. D. Hacon\ and\ J. Mckernan, On Shokurov's rational
connectedness conjecture, Duke Math. J. {\bf 138} (2007), no.~1,
119~--~136.

\smallskip

\bibitem{har}
R. Hartshorne, {\it Algebraic geometry}, Springer, New York, 1977.

\smallskip

\bibitem{hor-mum}
G. Horrocks\ and\ D. Mumford, A rank $2$ vector bundle on ${\bf
P}\sp{4}$ with $15,000$\ symmetries, Topology {\bf 12} (1973),
63~--~81.

\smallskip

\bibitem{hulek}
K. Hulek, Abelian surfaces in products of projective spaces, in
{\it Algebraic geometry (L'Aquila, 1988)}, 129~--~137, Lecture
Notes in Math., 1417 Springer, Berlin.

\smallskip

\bibitem{isk-man}
V. A. Iskovskih\ and\ Yu. I. Manin, Three-dimensional quartics and
counterexamples to the L\"uroth problem, Mat. Sb. (N.S.) {\bf
86(128)} (1971), 140~--~166.

\smallskip

\bibitem{isk-prok}
V. A. Iskovskikh\ and\ Yu.\ G. Prokhorov, Fano varieties, in {\it
Algebraic geometry, V}, 1~--~247, Encyclopaedia Math. Sci., 47
Springer, Berlin.

\smallskip

\bibitem{ilya-toric}
I. Karzhemanov, On characterization of toric varieties, Preprint
arXiv:1306.4131.

\smallskip

\bibitem{kaspr}
A. M. Kasprzyk, Bounds on fake weighted projective space, Kodai
Math. J. {\bf 32} (2009), no.~2, 197~--~208.

\smallskip

\bibitem{katz}
N. Katz, Serre\,--\,Tate local moduli, in {\it Algebraic surfaces
(Orsay, 1976~--~78)}, 138~--~202, Lecture Notes in Math., 868
Springer, Berlin.

\smallskip

\bibitem{kollar-ellipt-cy}
J. Koll\'ar, Deformations of elliptic Calabi--Yau manifolds,
Preprint arXiv:1206.5721.

\smallskip

\bibitem{kollar-shaf}
J. Koll\'ar, Shafarevich maps and plurigenera of algebraic
varieties, Invent. Math. {\bf 113} (1993), no.~1, 177~--~215.

\smallskip

\bibitem{kol-lars}
J. Koll\'ar\ and\ M. Larsen, Quotients of Calabi-Yau varieties, in
{\it Algebra, arithmetic, and geometry: in honor of Yu. I. Manin.
Vol. II}, 179~--~211, Progr. Math., 270 Birkh\"auser Boston,
Boston, MA.

\smallskip

\bibitem{kollar-mori}
J. Koll\'ar\ and\ S. Mori, {\it Birational geometry of algebraic
varieties}, translated from the 1998 Japanese original, Cambridge
Tracts in Mathematics, 134, Cambridge Univ. Press, Cambridge,
1998.

\smallskip

\bibitem{lafforgue}
L. Lafforgue, Chtoucas de Drinfeld et correspondance de Langlands,
Invent. Math. {\bf 147} (2002), no.~1, 1~--~241.

\smallskip

\bibitem{libgober}
A. Libgober, On Mordell-Weil group of isotrivial abelian varieties
over function fields, Preprint arXiv:1209.4106.

\smallskip

\bibitem{looij}
E. Looijenga, Root systems and elliptic curves, Invent. Math. {\bf
38} (1976/77), no.~1, 17~--~32.

\smallskip

\bibitem{lubin-tate}
J. Lubin\ and\ J. Tate, Formal moduli for one-parameter formal Lie
groups, Bull. Soc. Math. France {\bf 94} (1966), 49~--~59.

\smallskip

\bibitem{manin-2}
Ju. I. Manin, On the theory of Abelian varieties over a field of
finite characteristic, Izv. Akad. Nauk SSSR Ser. Mat. {\bf 26}
(1962), 281~--~292.

\smallskip

\bibitem{manin-fg-big-paper}
Yu.\ I. Manin, Theory of commutative formal groups over fields of
finite characteristic, Uspehi Mat. Nauk {\bf 18} (1963), no.~6
(114), 3~--~90.

\smallskip

\bibitem{manin-arak}
Yu.\ I. Manin, Three-dimensional hyperbolic geometry as
$\infty$-adic Arakelov geometry, Invent. Math. {\bf 104} (1991),
no.~2, 223~--~243.

\smallskip

\bibitem{manin-tsfas}
Yu.\ I. Manin\ and\ M. A. Tsfasman, Rational varieties: algebra,
geometry, arithmetic, Uspekhi Mat. Nauk {\bf 41} (1986),
no.~2(248), 43~--~94.

\smallskip

\bibitem{mccabe}
J. P. McCabe, {\it P-ADIC THETA-FUNCTIONS}, ProQuest LLC, Ann
Arbor, MI, 1968.

\smallskip

\bibitem{mehta-srin}
V. B. Mehta\ and\ V. Srinivas, Varieties in positive
characteristic with trivial tangent bundle, Compositio Math. {\bf
64} (1987), no.~2, 191~--~212.

\smallskip

\bibitem{morikawa}
H. Morikawa, Theta functions and abelian varieties over valuation
fields of rank one. I, Nagoya Math. J. {\bf 20} (1962), 1~--~27.

\smallskip

\bibitem{mumfor-ab-vars}
D. Mumford, {\it Abelian varieties}, Tata Institute of Fundamental
Research Studies in Mathematics, No. 5 Published for the Tata
Institute of Fundamental Research, Bombay, 1970.

\smallskip

\bibitem{mumford-such-surf}
D. Mumford, An algebraic surface with $K$ ample, $(K\sp{2})=9$,
$p\sb{g}=q=0$, Amer. J. Math. {\bf 101} (1979), no.~1, 233~--~244.

\smallskip

\bibitem{mumford-curves-uni}
D. Mumford, An analytic construction of degenerating curves over
complete local rings, Compositio Math. {\bf 24} (1972),
129~--~174.

\smallskip

\bibitem{mumford-ab-vars-uni}
D. Mumford, An analytic construction of degenerating abelian
varieties over complete rings, Compositio Math. {\bf 24} (1972),
239~--~272.

\smallskip

\bibitem{mumford-kort-de-vries}
D. Mumford, An algebro-geometric construction of commuting
operators and of solutions to the Toda lattice equation, Korteweg
deVries equation and related nonlinear equation, in {\it
Proceedings of the International Symposium on Algebraic Geometry
(Kyoto Univ., Kyoto, 1977)}, 115~--~153, Kinokuniya Book Store,
Tokyo.

\smallskip

\bibitem{mumford-patholg-1}
D. Mumford, Pathologies of modular algebraic surfaces, Amer. J.
Math. {\bf 83} (1961), 339~--~342.

\smallskip

\bibitem{mumford-patholg-2}
D. Mumford, Further pathologies in algebraic geometry, Amer. J.
Math. {\bf 84} (1962), 642~--~648.

\smallskip

\bibitem{murre}
J. P. Murre, Reduction of the proof of the non-rationality of a
non-singular cubic threefold to a result of Mumford, Compositio
Math. {\bf 27} (1973), 63~--~82.

\smallskip

\bibitem{oort-formal-groups-newton-polygons}
F. Oort, Newton polygons and formal groups: conjectures by Manin
and Grothendieck, Ann. of Math. (2) {\bf 152} (2000), no.~1,
183~--~206.

\smallskip

\bibitem{shaf-shap}
I. I. Pjatecki\u\i -\v Sapiro\ and\ I. R. \v Safarevi\v c, Galois
theory of transcendental extensions and uniformization, Izv. Akad.
Nauk SSSR Ser. Mat. {\bf 30} (1966), 671~--~704.

\smallskip

\bibitem{pop-gal-cov-curves}
F. Pop, \'Etale Galois covers of affine smooth curves. The
geometric case of a conjecture of Shafarevich. On Abhyankar's
conjecture, Invent. Math. {\bf 120} (1995), no.~3, 555~--~578.

\smallskip

\bibitem{pop-pro-ell-I}
F. Pop, Pro-$\ell$ birational anabelian geometry over
algebraically closed fields I, Preprint (2003), available at {\it
http://www.math.upenn.edu/${\scriptscriptstyle\sim}$pop/Research/Papers.html}.

\smallskip

\bibitem{popov}
V. L. Popov, {\it Discrete complex reflection groups},
Communications of the Mathematical Institute, Rijksuniversiteit
Utrecht, 15, Rijksuniv. Utrecht, Utrecht, 1982.

\smallskip

\bibitem{prokhor-quots}
Yu.\, G. Prokhorov, Fields of invariants of finite linear groups,
in {\it Cohomological and geometric approaches to rationality
problems}, 245~--~273, Progr. Math., 282 Birkh\"auser Boston,
Boston, MA.

\smallskip

\bibitem{pro-1}
Yu.\, G. Prokhorov, Tr. Mat. Inst. Steklova {\bf 241} (2003),
Teor. Chisel, Algebra i Algebr. Geom., 210~--~217; translation in
Proc. Steklov Inst. Math. {\bf 2003}, no.~2 (241), 195~--~201.

\smallskip

\bibitem{sank}
G. K. Sankaran, Abelian surfaces in toric $4$-folds, Math. Ann.
{\bf 313} (1999), no.~3, 409~--~427.

\smallskip

\bibitem{sawan}
J. Sawon, Abelian fibred holomorphic symplectic manifolds, Turkish
J. Math. {\bf 27} (2003), no.~1, 197~--~230.

\smallskip

\bibitem{serre-elliptic-curves}
J.-P. Serre, {\it Abelian $l$-adic representations and elliptic
curves}, McGill University lecture notes written with the
collaboration of Willem Kuyk and John Labute W. A. Benjamin, Inc.,
New York, 1968.

\smallskip

\bibitem{serre-tate-ord-ab}
J.-P. Serre\ and\ J. Tate, Good reduction of abelian varieties,
Ann. of Math. (2) {\bf 88} (1968), 492~--~517.

\smallskip

\bibitem{shaf-principal-homog-spaces}
I. R. Shafarevich, Principal homogeneous spaces defined over a
function field, Trudy Mat. Inst. Steklov. {\bf 64} (1961),
316~--~346.

\smallskip

\bibitem{shaf-luroth}
I. R. Shafarevich, The L\"uroth problem, Trudy Mat. Inst. Steklov.
{\bf 183} (1990), 199~--~204, 229.

\smallskip

\bibitem{tate-isog-frob}
J. Tate, Endomorphisms of abelian varieties over finite fields,
Invent. Math. {\bf 2} (1966), 134~--~144.

\smallskip

\bibitem{tate-genus}
J. Tate, Genus change in inseparable extensions of function
fields, Proc. Amer. Math. Soc. {\bf 3} (1952), 400~--~406.

\smallskip

\bibitem{tate-ellipt-curves}
J. T. Tate, The arithmetic of elliptic curves, Invent. Math. {\bf
23} (1974), 179~--~206.

\smallskip

\bibitem{totar}
B. Totaro, Hilbert's 14th problem over finite fields and a
conjecture on the cone of curves, Compos. Math. {\bf 144} (2008),
no.~5, 1176~--~1198.

\end{thebibliography}
\end{document}